\defcite\vlad{MR2480715}
\defcite\kac{MR1417941}
\defcite\roi{MR1935501} 
\defcite\ER{engel_smooth}
\defcite\ro{MR1700512} 
\defcite\mil{MR2967107} 
\def\tp#1#2{#1^{(#2)}} 
\def\prim{\mathrm{prim}}
\def\LieCon{vertex Lie\xspace}
\begin{document}
\title{DT invariants from vertex algebras}

\author{Vladimir Dotsenko} 
\address{Institut de Recherche Math\'ematique Avanc\'ee, UMR 7501, Universit\'e de Strasbourg et CNRS, 7 rue Ren\'e-Descartes, 67000 Strasbourg, France}
\email{vdotsenko@unistra.fr}
\author{Sergey Mozgovoy} 
\address{School of Mathematics, Trinity College Dublin, Dublin 2, Ireland
\newline\indent
Hamilton Mathematics Institute, Dublin 2, Ireland}
\email{mozgovoy@maths.tcd.ie}

\begin{abstract}
We obtain a new interpretation of the cohomological Hall algebra $\mathcal{H}_Q$ of a symmetric quiver $Q$ in the context of the theory of vertex algebras. Namely, we show that the graded dual of $\mathcal{H}_Q$ is naturally identified with the underlying vector space of the principal free vertex algebra associated to the Euler form of $Q$. Properties of that vertex algebra are shown to account for the key results about~$\mathcal{H}_Q$. In particular, it has a natural structure of a vertex bialgebra, leading to a new interpretation of the product of~$\mathcal{H}_Q$. Moreover, it is isomorphic to the universal enveloping vertex algebra of a certain vertex Lie algebra, which leads to a new interpretation of Donaldson--Thomas invariants of $Q$ (and, in particular, re-proves their positivity). Finally, it is possible to use that vertex algebra to give a new interpretation of CoHA modules made of cohomologies of non-commutative Hilbert schemes.
\end{abstract}

\maketitle

\section{Introduction}
Cohomological Hall algebras (abbreviated as \cohas) were introduced in \cite{kontsevich_cohomological} as a mathematical interpretation of the notion of algebra of (closed) BPS states in string theory \cite{harvey_algebras}.
In a nutshell, the definition of \coha goes through the same lines as the definition of the usual Hall algebra ~\cite{ringel_hall,lusztig_quivers},
but it uses cohomology of moduli stacks of objects instead of constructible functions on those stacks.
As a consequence, the underlying vector space of \coha is easier to describe explicitly.
In particular, for the category of quiver representations, one can obtain \cite{kontsevich_cohomological}
an explicit description of the product of CoHA using the Feigin--Odesskii shuffle product.
\medskip

On the other hand, one is also interested in representations of BPS algebras, modeled by spaces of open BPS states.
In mathematical terms this corresponds to \coha-module structures constructed on cohomology of moduli spaces of stable framed quiver representations \cite{soibelman_remarks,franzen_cohomology,franzen_chow}.
This approach is reminiscent of the construction of
Heisenberg algebra action on the cohomology (or K-theory) of Hilbert schemes on surfaces \cite{nakajima_heisenberg,grojnowski_instantons,lehn_chern,lehn_symmetric,li_vertex,schiffmann_elliptic}
or quantum group actions on equivariant cohomology (or K-theory) of Nakajima quiver varieties \cite{nakajima_quiverb}. 
Explicit description of \coha-modules, using shuffle products, was obtained in \cite{franzen_cohomology}.
\medskip 

Poincar\'e series of \cohas provide one of the possible approaches to the refined Donaldson--Thomas (DT) invariants, also called the BPS invariants \cite{kontsevich_stability}.
As a result, one can think about \cohas as a categorification of DT invariants.
Unraveling new intrinsic structures of \cohas (say, that of a Hopf algebra or that of a vertex algebra) leads to a better understanding of properties of DT invariants.
\medskip

Vertex algebras were introduced in \cite{MR843307} in order to formalise the machinery behind some remarkable constructions in representation theory of infinite-dimensional Lie algebras \cite{MR595581,MR747596}. 
In modern language those constructions are instances of lattice vertex algebras.
More general vertex algebras
provide a mathematical apparatus to work with operator product expansions (OPEs) in quantum field theory \cite{MR260310,MR295712}.
For us, a special role will be played by 
the principal subalgebras 
of lattice vertex algebras
\cite{MR1275728,MR2967107}
which give explicit realisations of the so called free vertex algebras \cite{MR1700512,MR1935501}. 
\medskip


Operator product expansions $A(z)B(w)=\sum_{n\in\bZ}(z-w)^n O_n(w)$ have a regular part (corresponding to non-negative $n$) and a singular part.
If one is only interested in the singular parts of OPEs, this leads to the notion of \LieCon algebras, also known as Lie conformal algebras \cite{MR1417941,MR1670692}.
By contrast to vertex algebras these structures only use $n$-products for $n\ge0$.
The relationship between \LieCon algebras and vertex algebras is closely resembling that between Lie algebras and associative algebras.
Notably, many important vertex algebras,
and in particular free vertex algebras mentioned earlier,
arise as universal envelopes of \LieCon algebras. 
These universal enveloping vertex algebras are cocommutative vertex bialgebras (meaning vertex algebras with a compatible coalgebra structure)
and are automatically universal enveloping algebras of Lie algebras.
Thus they possess a Hopf algebra structure, and this is where their relationship to \cohas manifests itself.
\medskip

Exploring the relationship between \cohas and vertex algebras
is the main topic of this paper.
This relationship is not entirely new and there seem to be some physical reasons for it~
\cite{rapcak_cohomological}.
For example,
in \cite[Theorem B]{schiffmann_cherednik} (see also \cite{rapcak_cohomological}) 
a relationship between 
an equivariant spherical \coha of a $3$-loop quiver 
and the current algebra \cite[\S3.11]{arakawa_representation} of some vertex algebra was established.
On the other hand, in \cite{joyce_ringel} 
there was given a very general construction of vertex algebra structures on homology groups of various moduli stacks, which leads to a vertex algebra structure on the duals of cohomological Hall algebras. 
\medskip

In this paper, we traverse the path between \cohas and vertex algebras in a different direction. Namely, given a symmetric quiver $Q$, we show that, for the principal free vertex algebra corresponding to the Euler form of $Q$, the shuffle product formula of CoHA emerges ``for free'', since these free vertex algebras are universal envelopes of appropriate \LieCon algebras. 
\medskip

In \cite[\S6.2]{kontsevich_cohomological} 
it was asked if \coha can be identified with a Hopf algebra of the form $U(\fg)$, where $\fg$ is a graded Lie algebra. In physics terms, this Lie algebra should correspond to the space of single-particle BPS states \cite{harvey_algebras,rapcak_cohomological}. Of course, in view of Milnor--Moore theorem, one just needs to find a suitable cocommutative bialgebra structure on \coha. 
A bialgebra structure for usual Hall algebras is guaranteed by Green's theorem \cite{green_hall} and appropriate twists from ~\cite{xiao_drinfeld}. 
A cohomological incarnation of Green's theorem
was proved in \cite{davison_critical} 
for some localization of \cohas for quivers with potential
(see also \cite{yang_cohomological} for a purely algebraic approach to a similar problem).
It is unclear, however, what are the implications of this result for a bialgebra structure on the \coha itself.
For a different cocommutative coproduct for usual Hall algebras see \cite{ringel_lie,joyce_configurationsa,bridgeland_stabilitya}.

\medskip

For a symmetric quiver $Q$, the algebra $\cH_Q$ is supercommutative (once certain sign twists are implemented), and thus the required Lie algebra $\fg$ has to be abelian.
In this case, it was conjectured in \cite[\S2.6]{kontsevich_cohomological} 
and proved in \cite{MR2956038}
that there exists an abelian Lie algebra $\fg$ such that $\cH_Q\iso U(\fg)$ and \fg is of the form $\fg^\prim[x]$ with $x$ of cohomological degree $2$.
Further generalizations for quivers with potentials were obtained in \cite{davison_cohomological}. 
\medskip

In our approach to the above question we change the setting and we argue that
the graded dual $\cH_Q\dual$, which is a 
(super) cocommutative coalgebra, 
has a vertex bialgebra structure and 
can be represented as the universal enveloping algebra of a canonical Lie algebra
(non-abelian in general).
More precisely, 
we establish the following result.

\begin{theorem}[{Theorem \ref{th:DT}}]
Let $Q$ be a symmetric quiver, $L=\bZ^{Q_0}$ 
and $\hi$ be the Euler form of~$Q$.
Then the coalgebra $\cH_Q\dual$ has a canonical structure of a cocommutative connected vertex bialgebra.
The space of primitive elements 
$$C=P(\cH_Q\dual)\in\Vect^{L\xx\bZ}$$
is a vertex Lie algebra (also having a structure of a Lie algebra)
such that $\cH_Q\dual$ is isomorphic to the universal enveloping vertex algebra of $C$ (as vertex bialgebras).
The canonical derivation $\dd$ on $C$ has $L$-degree zero and cohomological degree $-2$,
and $C$ is a free $\bQ[\dd]$-module 
such that
the space of generators 
$$C/\dd C=\bop_{\bd\in L}W_\bd=\bop_{\bd\in L,k\in\bZ}W_\bd^k$$ 
has finite-dimensional components $W_\bd$
and $k\equiv\hi(\bd,\bd)\pmod 2$ whenever $W_\bd^k\ne0$. 
\end{theorem}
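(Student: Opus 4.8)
The plan is to deduce the theorem from the identification of $\cH_Q\dual$ with the principal free vertex algebra $\cF$ attached to $(L,\hi)$, established earlier in the paper, combined with general structural facts about free vertex algebras.

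First I would invoke that a free vertex algebra is the universal enveloping vertex algebra $U(C)$ of a free \LieCon algebra $C$ (see \roi), and that, as recalled in the introduction, any such universal envelope is a cocommutative vertex bialgebra which is at the same time the universal enveloping algebra of $C$ viewed as an ordinary Lie algebra. Transporting this structure along $\cH_Q\dual\iso\cF$ produces the desired vertex bialgebra structure on $\cH_Q\dual$; cocommutativity may alternatively be read off from the (sign-twisted) supercommutativity of $\cH_Q$, and connectedness from the fact that the homogeneous component of degree $(0,0)\in L\xx\bZ$ equals $\bQ$. The subspace $C\sbs\cF$ is recovered intrinsically as the space of primitive elements --- the vertex-algebraic analogue of $P(U(\fg))=\fg$, whose (super)coalgebra version is available here since $\cH_Q\dual$ is connected and (super)cocommutative --- so $P(\cH_Q\dual)=C$ is simultaneously a \LieCon algebra and, via the commutator bracket of the bialgebra $\cH_Q\dual$ together with the Milnor--Moore theorem, an ordinary Lie algebra whose universal enveloping algebra is again $\cH_Q\dual$.

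Next I would carry out the grading bookkeeping. Under $\cH_Q\dual\iso\cF$ the $L$-grading corresponds to the dimension-vector grading and the $\bZ$-grading to the negative of the cohomological grading of $\cH_Q$. The canonical generators of $\cF$ are dual to the unit of $\cH_{Q,\be_i}$ for the simple dimension vectors $\be_i$, hence lie in degrees $(\be_i,0)$, while the canonical derivation $\dd$ of $C$ is the restriction of the translation operator of $\cF$; on the \coha side the latter shifts the polynomial degree in the equivariant variables $x_i$ by one, so $\dd$ has $L$-degree $0$ and cohomological degree $-2$. The parity statement follows because all cohomological degrees occurring in $\cH_{Q,\bd}$ lie in a single residue class modulo $2$, equal to $\hi(\bd,\bd)$ modulo $2$, governed by the Euler-form degree shift built into the product of $\cH_Q$ (equivalently, into the cocycle defining $\cF$); this residue class passes to the generating space $C/\dd C$ because $\dd$ has even degree.

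Finally, the freeness of $C$ as a $\bQ[\dd]$-module, with $C/\dd C=\bop_\bd W_\bd$, is a structural property of free \LieCon algebras, coming from the normal-form bases in which every element has a unique expression as a $\bQ[\dd]$-combination of iterated bracket monomials in the generators. Since $\cF$ is free on the finitely many generators indexed by $Q_0$, the component $W_\bd$ is spanned by the finitely many such monomials of total $L$-degree $\bd$, hence finite-dimensional; together with the previous paragraph this gives the decomposition $C/\dd C=\bop_{\bd\in L}W_\bd=\bop_{\bd\in L,\,k\in\bZ}W_\bd^k$ with finite-dimensional pieces and the congruence $k\equiv\hi(\bd,\bd)\pmod 2$. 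Within this deduction the genuine input is the identification $\cH_Q\dual\iso\cF$, handled in the earlier sections; the remaining delicate point is precisely the finiteness of the multidegree components $W_\bd$ --- this is exactly what makes the Donaldson--Thomas invariants of $Q$ a finite collection of integers, and, since they are then expressed through the dimensions of the spaces $W_\bd$, this re-proves their positivity.
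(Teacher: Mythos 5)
Your argument tracks the paper's proof closely through the first three steps: identifying $\cH_Q\dual$ with the principal free vertex algebra, invoking Propositions~\ref{li-coprod}--\ref{li-coprod2} to obtain the cocommutative connected vertex bialgebra structure with $P(\cH_Q\dual)\iso C\iso\fL_-$, and the degree/parity bookkeeping. (One small omission: you should note that the locality $\cN(i,i)$ can equal $-1$ and invoke Lemma~\ref{locality -1 and 0} to see that $\P_Q$ is still a free vertex algebra of \emph{non-negative} locality, so that the conformal-algebra machinery applies at all.)

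The genuine gap is in your treatment of the freeness of $C$ over $\bQ[\dd]$ and, above all, the finite-dimensionality of the components $W_\bd$ of $C/\dd C$. You assert both follow from ``normal-form bases'' for free \LieCon algebras, i.e.\ that $W_\bd$ is visibly spanned by the finitely many iterated-bracket monomials of multidegree $\bd$. That finiteness is exactly the content of Efimov's theorem (that the DT invariants $\Om_\bd$ are polynomials, not mere power series), and it is not obvious: the iterated products $i_{1}\eprod{n_1}(i_{2}\eprod{n_2}(\cdots))$ a priori involve infinitely many choices of the nonnegative indices $n_k$, and bounding them uniformly in a free conformal algebra requires the full combinatorial basis theory. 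More to the point, the paper deliberately does \emph{not} argue this way. It proves freeness over $\bQ[\dd]$ by introducing a second derivation $t$ with $t(i(n))=-i(n+1)$, showing $[\dd,t]=\n\bd\cdot\Id$ on $(\fL_-)_\bd$, viewing each $(\fL_-)_\bd$ as a graded module over the Weyl algebra $\bQ[t,\dd]$, and invoking Lemma~\ref{weyl rep}. It then proves finite-dimensionality of $W_\bd$ by an entirely different, geometric route: the Poincar\'e series identity $Z(\cH_Q,x,q\inv)=\Exp(Z(W,x,q)/(1-q))$ identifies $\ch(W_\bd)=\Om_\bd(q\inv)$, and the fact that the \COHA-modules $\cM_\bw$ are (shifted) cohomology of the algebraic varieties $\Hilb_{\bd,\bw}$, hence finite-dimensional, forces $\ch(W_\bd)$ to be a Laurent polynomial. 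This geometric input is the key new idea and what makes this a new proof of DT positivity; replacing it with an appeal to known combinatorial bases would make the argument circular relative to what the paper is trying to establish.
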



The coalgebra $\cH_Q\dual$ is isomorphic to the universal enveloping algebra $U(C)$, where we interpret $C$ as a Lie algebra.
Consequently, the refined Donaldson--Thomas invariants of ~$Q$ are the characters of the components of $C/\dd C$ and are contained in $\bN[q^{\pm\oh}]$, 
hence we obtain a new proof of positivity of DT invariants for symmetric quivers, originally proved in \cite{MR2956038}.
Note that the quotient $C/\dd C$ has a canonical Lie algebra structure, see \S\ref{lie conformal}.
The Lie algebra $C/\dd C$ is different from the BPS Lie algebra introduced in \cite{davison_cohomological} (although the characters of both algebras compute DT invariants), which is a Lie subalgebra of $\cH_Q$ and has, in particular, the trivial bracket.
We conjecture that our results can be generalized
to \cohas associated to (symmetric) quivers with potential.
\medskip

Our approach also allows one to re-interpret the \COHA-modules $\cM_\bw$ arising from moduli spaces of stable framed representations (also known as non-commutative Hilbert schemes). In particular, we construct combinatorial spanning sets for duals of those modules; this result is a substantial generalisation (and a conceptual interpretation) of the main theorem of \cite{MR2480715}.
\medskip

Let us mention that 
all \LieCon algebras that we consider naturally lead
to Lie algebras with quadratic relations.
Moreover,  principal free vertex algebras are modules with quadratic relations over these Lie algebras,
so one may wish to study them in the context of the Koszul duality theory. This approach is taken in the paper~\cite{DoFeRe}.  

\subsection*{Structure of the paper. }

In \S\ref{sec:conventions}, we summarize various conventions used throughout the paper. In particular, for a free abelian group $L$ equipped with an integer-valued symmetric bilinear form, we introduce in \S\ref{sec:graded} a special braiding for the monoidal category of $L$-graded vector spaces; in this way we completely avoid any non-canonical choices of sign twists needed both to ensure super-commutativity of CoHA \cite[\S2.6]{kontsevich_cohomological} and mutual locality of vertex operators in lattice vertex algebras \kac[\S5.4].  
In \S\ref{sec coha}, we recall the necessary definitions and results of the theory of cohomological Hall algebras and their modules.
In \S\ref{sec:VA}, we recall the necessary definitions and results of the theory of vertex algebras and conformal algebras, adapted to the symmetric monoidal category of $L$-graded vector spaces with the braiding arising from a symmetric bilinear form.
In \S\ref{coha and free}, we prove the main result of this paper: a new interpretation of CoHA $\cH_Q$ as the graded dual of the principal free vertex algebra $\cP_Q$. To that end, we obtain a natural isomorphism of graded vector spaces $\cP_Q\dual\iso\cH_Q$ (Proposition \ref{prop:sympoly}), and then show that the canonical coalgebra structure on $\cP_Q$ leads to the shuffle product of CoHA (Theorem \ref{th:VOA-CoHA}). As a consequence, we show that the dual of CoHA is identified with the universal envelope of a Lie algebra with a freely acting derivation (Theorem \ref{th:DT}), leading to a new proof of positivity of DT invariants (Corollary \ref{cor:DT}). 
In \S\ref{sec:iso-modules}, we give two new descriptions of the dual space of the CoHA-module $\cM_\bw$: as the kernel of the appropriate reduced coaction map (Theorem \ref{th:modkernel}) and via an explicit combinatorial spanning set inside $\P_Q$ (Theorem \ref{module spanning}). We also use those results to establish a surprising symmetry result for the positive and the negative halves of the coefficient Lie algebra, which may be interpreted as strong evidence for the Koszulness conjecture of~\cite{DoFeRe}.

\subsection*{Acknowledgements} The first author is grateful to Boris Feigin for introducing him to the beauty of principal subalgebras of lattice vertex algebras, and to Michael Finkelberg and Valery Lunts whose questions about a decade ago made him suspect that his work in~\cite{MR2480715} may be related to non-commutative Hilbert schemes. The second author is grateful to Ben Davison, Hans Franzen, Boris Pioline, Markus Reineke, Olivier Schiffmann and Yan Soibelman for useful discussions.
Both authors would like to thank
Arkadij Bojko, Evgeny Feigin and  Dominic Joyce for helpful feedback on a draft version of the paper.

This work has benefited from support provided by University of Strasbourg Institute for Advanced Study (USIAS) for the Fellowship USIAS-2021-061, within the French national program ``Investment for the future'' (IdEx-Unistra), by the French national research agency project ANR-20-CE40-0016, and by Institut Universitaire de France.
\section{Conventions}\label{sec:conventions}

Unless specified otherwise, all vector spaces and (co)chain complexes in this article are defined over the ground field of rational numbers.
We use cohomological degrees, and view homologically graded complexes as cohomologically graded ones: 
for a chain complex $C_*$, we consider the cochain complex $C^*$ with $C^{n}=C_{-n}$
(note that $C[k]^n=C^{n+k}=C_{-n-k}=C[k]_{-n}$).
Throughout the paper, $L$ denotes a free abelian group equipped with an integer-valued symmetric bilinear form $(\cdot,\cdot)$. 

Our work brings together two different worlds: that of cohomological Hall algebras and that of vertex operator algebras. The former operates within the derived category $D(\Vect)$ which we identify with $\Vect^\bZ$, the latter uses half-integer conformal weights and thus operates within the category $\Vect^{\oh\bZ}$. Both traditions are well established, so we decided to not break either of them, but rather make the necessary effort to carefully translate results from one language to another.

\subsection{Borel--Moore homology}
For an introduction to Borel--Moore homology see \eg \cite{chriss_representation} and for an introduction to equivariant Borel--Moore homology see \eg \cite{edidin_equivariant}.
Given an algebraic variety $X$ over $\bC$, we define its Borel--Moore homology $H^\BM(X)$ as
$$H^\BM(X)^{-n}=H^\BM_n(X)=H^n_c(X)\dual,$$
where $H^n_c(X)$ denotes the cohomology with compact support and coefficients in $\bQ$.
If $X$ is smooth and has dimension $d_X$, then the Poincar\'e duality implies that
$$H^\BM(X)\iso H^*(X)[2d_X].$$
Given an algebraic group $G$ acting on $X$, we define the equivariant Borel--Moore homology $H^\BM_G(X)=H^\BM([X/G])$ as
$$H^\BM_G(X)^{-n}=H^\BM_{G,n}(X)=H^n_{c,G}(X)\dual,$$
where $H^n_{c,G}(X)$ denotes $G$-equivariant cohomology with compact support and coefficients in $\bQ$.
If $X$ is smooth and $X,G$ have dimensions $d_X,d_G$ respectively, then
$$H^\BM_G(X)\iso H^*_G(X)[2d_X-2d_G].$$

\subsection{Graded vector spaces}\label{sec:graded}

Let us consider the category $\Vect^L$ of $L$-graded vector spaces $V=\bop_{\al\in L}V_\al$, with morphisms of degree zero.
It has a structure of a closed monoidal category with tensor products and internal \Hom-objects defined by
$$V\ts W=\bop_{\al\in L}(V\ts W)_\al,\qquad
(V\ts W)_\al=\coprod_{\beta\in L}V_{\beta}\ts W_{\al-\beta},$$
$$\lHom(V,W)=\bop_{\al\in L}\Hom_\al(V,W),\qquad
\Hom_\al(V,W)=\prod_{\beta\in L}\Hom(V_\beta,W_{\al+\beta}).$$
We equip $\Vect^L$ with a symmetric monoidal category structure, where the braiding morphism is defined using the symmetric bilinear form $(\cdot,\cdot)$ on $L$:
\begin{equation}
\si\colon V\ts W\to W\ts V,\qquad a\ts b\mto(-1)^{(\al,\beta)}b\ts a,\qquad
a\in V_\al,\, b\in W_\beta.
\end{equation}
In what follows, we shall often use $\cC$ to denote $\Vect^L$ equipped with the thus defined symmetric closed monoidal category structure.

We define associative algebras in $\cC$ as monoid objects in this category; in particular,for each $V\in\cC$, the object $\lEnd(V)=\lHom(V,V)\in\cC$ is an associative algebra. Using the braiding~$\si$, one may also define commutative algebras and Lie algebras, and their modules. (Alternatively, one may note that the category $\cC$ contains the category $\Vect$ as a full symmetric monoidal subcategory of objects of degree zero, and so one may consider objects in $\cC$ which are algebras over the classical operads $\mathsf{Ass}$, $\mathsf{Com}$, and $\mathsf{Lie}$ in $\Vect$.) In particular, as in the case of $\Vect$, the free associative algebra generated by an object $X$ of $\cC$ is the tensor algebra $\mathsf{T}(X)=\coprod_{n\ge0} X^{\otimes n}$, and the free commutative algebra generated by an object $X$ of $\cC$ is the symmetric algebra $S(X)=\coprod_{n\ge0} (X^{\otimes n})_{\Si_n}$. 

Given an associative algebra $A\in\cC$, we may equip it with the bracket
$$[-,-]\colon A\ts A\to A,\qquad [a,b]=\mu(a\ts b)-\mu\si(a\ts b) ;$$
this defines a functor from the category of associative algebras in $\cC$ to the category of Lie algebras in $\cC$. This functor has a left adjoint functor, the functor of the universal enveloping algebra $U(\fg)$ of a Lie algebra $\fg$. 
We shall use two different versions of the Poincar\'e--Birkhoff--Witt theorem for universal enveloping algebras. First of them asserts that if $\fg$ is a Lie algebra and $\fh\subset\fg$ is a Lie subalgebra, the universal enveloping algebra $U(\fg)$ is a free $U(\fh)$-module, and the vector space of generators of this module is isomorphic to $S(\fg/\fh)$; moreover, if there exists a Lie subalgebra $\fh'$ such that $\fg=\fh\oplus \fh'$, we may take $U(\fh')\subset U(\fg)$ as the space of generators.
The second version uses the fact that $U(\fg)$ has a canonical coproduct $\Delta$ for which elements of $\fg$ are primitive, and this coproduct makes $U(\fg)$ a cocommutative coassociative coalgebra. The theorem asserts that as a coalgebra, $U(\fg)$ is isomorphic to $S^c(\fg)$, the cofree cocommutative coassociative conilpotent coalgebra generated by $\fg$ (note that since we work over $\bQ$, the underlying object of $S^c(X)$ is the same as that of $S(X)$ for all $X$ in $\cC$). 
To prove the Poincar\'e--Birkhoff--Witt theorem in $\cC$, one may use the methods of \cite{MR506890} or \cite{dotsenko2018endofunctors} for the first version and the methods of \cite[Appendix B]{MR258031} or \cite{MR2504663} for the second version.    

We note that for an abelian group $L$ equipped with a homomorphism $p\colon L\to\bZ_2$ (for example, the parity $\bZ\to\bZ_2$),
one normally thinks of the category of $L$-graded vector spaces with the braiding morphism (the \idef{Koszul sign rule})
\begin{equation}\label{koszul}
\si\colon V\ts W\to W\ts V,\qquad a\ts b\mto(-1)^{p(\al)p(\beta)}b\ts a,\qquad
a\in V_\al,\, b\in W_\beta.
\end{equation}
and refers to the corresponding commutative (or Lie) algebras as super-commutative algebras (or Lie superalgebras). In this paper we mostly encounter the more general setting discussed above, and suppress the qualifier ``super''.

\subsection{Laurent series}

Let $V$ be a vector space. The vector space of doubly infinite Laurent series with coefficients in $V$ is denoted by $V\pser{z^{\pm1}}$, and its subspace of formal Laurent series (that is, series for which only finitely many negative components are non-zero) is denoted by $V\lser{z}\subset V\pser{z^{\pm1}}$. 
For a series $v=\sum_{n\in\bZ} v_nz^n \in V\pser{z^{\pm1}}$, 
we denote the coefficient $v_{-1}$ by $\Res_z v$. 

Formal Laurent series with coefficients in an algebra do themselves form an algebra. It is important to note that doubly infinite Laurent series in several variables contain several different subspaces of formal Laurent series in those variables: for instance, the subspaces $\bQ\lser{z}\lser{w}$ and $\mathbb{Q}\lser{w}\lser{z}$ of $\bQ\pser{z^{\pm1},w^{\pm1}}$ are different. A lot of formulas in the theory of vertex algebras use the fact that rational functions like $\frac1{z-w}$ can be expanded as elements of both of these rings. To avoid unnecessarily heavy formulas, we shall frequently use the \emph{binomial expansion convention} for such expansion, see \eg \cite[\S2.2]{lepowsky_introduction}:
for $n\in\bZ$, we define the formal Laurent series
$(z+w)^n$ by the formula
$$(z+w)^n
=\sum_{k\ge 0}\binom{n}{k}z^{n-k}w^k.$$
In plain words, we expand powers of binomials as power series in the second summand. Thus, for example, 
$(z-w)^n=(-1)^n(-z+w)^n$
for all $n\in\bZ$, but it is equal to $(-1)^n(w-z)^n$ only for $n\ge0$. 
An important doubly infinite Laurent series 
in $\bQ\pser{z^{\pm1}}$
is the ``delta function''
 \[
\delta(z)=\sum_{n\in\mathbb{Z}}z^n.
 \]
Doubly infinite Laurent series cannot be multiplied, but in the instances where we use the above series, we shall use the following version which can be multiplied by any formal Laurent series in $w$: 
$$\de\rbr{\frac{z-w}u}=\sum_{n\in\bZ}\sum_{k\ge0}(-1)^k\binom nk u^{-n}z^{n-k}w^k.$$
A useful formula involving this expression is $\Res_z \de\rbr{\frac{z-w}u}=u$.  

\subsection{Characters and Poincar\'e series}
\label{sec:char}
Given an object $M\in\Vect^\bZ$ with finite-dimen\-sional components, we define its \idef{character} $\ch(M)$ by the formula
\begin{equation}
\ch(M)=\sum_{k\in\bZ}\dim M^kq^{-\oh k}.
\end{equation}
The number $-\oh k$ will be called the \idef{weight} of the component $M^k$ (it may be convenient to view $-k$ as a homological degree).
Note that 
$$\ch(M[n])=q^{\oh n}\ch(M).$$
In particular, for an algebraic variety $X$, we have
$$\ch(H^\BM(X))
=\sum_{k\in\bZ} \dim H^\BM(X)^{-k}q^{\oh k}
=\sum_{k\in\bZ} \dim H^k_c(X)q^{\oh k}
=P_c(X,q),$$
the \idef{Poincar\'e polynomial} (with compact support) of $X$.

More generally, given an object $M\in(\Vect^{L})^\bZ\iso\Vect^{L\xx\bZ}$ with finite-dimensional components $M_\bd^k$, for $\bd\in L$ and $k\in\bZ$, we define its \idef{Poincar\'e series} $Z(M,x,q)$ by the formula
\begin{equation}\label{part function}
Z(M,x,q)=\sum_{\bd\in L}(-1)^{(\bd,\bd)}\ch(M_\bd)x^\bd
=\sum_{\bd\in L}\sum_{k\in\bZ}(-1)^{(\bd,\bd)}
\dim M_\bd^k \cdot q^{-\oh k}x^\bd.
\end{equation}

\subsection{Plethystic exponential}
For more information on \la-rings and plethystic exponentials see \eg
\cite{getzler_mixed,mozgovoy_computational}.
Consider the ring
$$R=\bQ\lser{q^\oh}\pser{x_i\col i\in I},\qquad I=\set{1,\dots,r},$$
and its maximal ideal $\fm$.
We define \idef{plethystic exponential} to be the group isomorphism
$$\Exp\colon (\fm,+)\to(1+\fm,*)$$
defined on monomials by
$$\Exp(q^k x^\bd)=\sum_{n\ge0}q^{nk}x^{n\bd},\qquad k\in\oh\bZ,\,\bd\in\bN^I\ms\set0.$$
Let us assume now that $L=\bZ^I$ is equipped with a symmetric bilinear form $(\cdot,\cdot)$ and let $\Vect^{L\xx\bZ}$ be the corresponding symmetric monoidal category (with the signs in the braiding depending just on $L$-degrees).
We consider a subcategory $\cA\sbs\Vect^{L\xx\bZ}$ consisting of objects 
$$M=\bop_{\bd\in\bN^I}M_\bd
=\bop_{\bd\in\bN^I}\bop_{k\in\bZ}M_\bd^k$$
such that $M_\bd^k$ are finite-dimensional and $M_\bd^k=0$ for $k\gg0$.
Then the Poincar\'e series \eqref{part function}
induces a ring homomorphism
$$Z\colon K_0(\cA)\to R.$$
This homomorphism is actually a \la-ring homomorphism (with the \la-ring structure on $K_0(\cA)$ induced by the symmetric monoidal category structure on \cA that we defined earlier, see \eg~\cite{getzler_mixed,heinloth_note}).
We will formulate this fact in the following way.

\begin{theorem}
For any graded space $M\in\cA$ with $M_0=0$, we have
$$Z(S(M))=\Exp(Z(M)).$$
\end{theorem}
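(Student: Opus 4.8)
The plan is to reduce the claimed identity to the case of a one-dimensional homogeneous generator, where both sides can be written down by hand, and to note that the sign $(-1)^{(\bd,\bd)}$ built into the definition \eqref{part function} of $Z$ is precisely what converts the categorical symmetric power in $\cC$ into the plethystic exponential.

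First I would check that both sides are defined: since $M_0=0$ the series $Z(M)$ has zero constant term, so $Z(M)\in\fm$ and $\Exp(Z(M))$ makes sense; and $S(M)$ again lies in $\cA$, because for a fixed $\bd\in\bN^I$ only finitely many decompositions $\bd=\bd_1+\dots+\bd_n$ into nonzero summands occur and each tensor factor $M_{\bd_i}$ is bounded above in cohomological degree, which forces the same for $S(M)_\bd$. Next I would use the isomorphism $S(M\oplus N)\iso S(M)\ts S(N)$ in $\cC$ (valid in any $\bQ$-linear symmetric monoidal category) together with the fact that $Z$ is a ring homomorphism to get $Z(S(M\oplus N))=Z(S(M))\cdot Z(S(N))$; this matches $\Exp(Z(M)+Z(N))=\Exp(Z(M))*\Exp(Z(N))$ since $\Exp$ is a group homomorphism. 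Choosing a homogeneous basis of $M$, one may therefore write $M$ as a (possibly infinite) direct sum of one-dimensional pieces $\bk_{(\bd,k)}$ concentrated in single bidegrees $(\bd,k)$ with $\bd\ne0$; since in each fixed $x$-degree and each fixed power of $q^\oh$ only finitely many of these pieces contribute, the resulting infinite products converge in $R$, and it suffices to treat a single $M=\bk_{(\bd,k)}$.

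For $M=\bk_{(\bd,k)}$ I would distinguish two cases according to the parity of $(\bd,\bd)$, which is the sign by which a transposition acts on $M\ts M$, noting that $Z(M)=(-1)^{(\bd,\bd)}q^{-\oh k}x^\bd$. If $(\bd,\bd)$ is even, then $S^n(M)=(M^{\ts n})_{\Si_n}$ is one-dimensional in bidegree $(n\bd,nk)$ for all $n\ge0$, so
$$Z(S(M))=\sum_{n\ge0}(-1)^{n^2(\bd,\bd)}q^{-\oh nk}x^{n\bd}=\sum_{n\ge0}q^{-\oh nk}x^{n\bd}=\Exp(Z(M)).$$
If $(\bd,\bd)$ is odd, then $M\ts M$ is the sign representation of $\Si_2$, hence $S^n(M)=0$ for $n\ge2$ and $S(M)=\bQ\oplus M$; so $Z(S(M))=1-q^{-\oh k}x^\bd$, whereas $\Exp(Z(M))=\Exp(-q^{-\oh k}x^\bd)=\Exp(q^{-\oh k}x^\bd)\inv=\big(\sum_{n\ge0}q^{-\oh nk}x^{n\bd}\big)\inv=1-q^{-\oh k}x^\bd$, so the two sides agree here as well.

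A slicker alternative is to argue inside a completion of the \la-ring $K_0(\cA)$: one has $[S(M)]=\sum_{n\ge0}\sigma^n([M])$, where $\sigma^n([M])=[S^n(M)]$ is the operation coming from the symmetric monoidal structure, and applying the \la-ring homomorphism $Z$ and identifying $\sum_{n\ge0}\sigma^n$ on $R$ with $\Exp$ (see \cite{getzler_mixed,heinloth_note}) gives the statement at once. In either presentation, the only points that need real care are the convergence of the infinite products when $M$ is infinite-dimensional and the verification that the sign rule in \eqref{part function} is the one matching categorical symmetric powers; the latter is precisely the even/odd computation above and is the actual content of the statement, everything else being bookkeeping.
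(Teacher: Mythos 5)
Your proposal is correct and follows essentially the same route as the paper: reduce to a one-dimensional homogeneous piece $M=M^k_\bd$ and verify the identity separately when $(\bd,\bd)$ is even (so $S(M)=\coprod_n M^{\otimes n}$) and when it is odd (so $S(M)=\bQ\oplus M$). You spell out the reduction step via $S(M\oplus N)\iso S(M)\ts S(N)$ and the ring-homomorphism property of $Z$, which the paper leaves implicit, but that is a minor elaboration rather than a different argument.
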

\begin{proof}
It is enough to prove the statement for a one-dimensional space $M=M_\bd^k$.
If $(\bd,\bd)$ is odd, then $Z(M)=-q^{-\oh k}x^\bd$. On the other hand, $S(M)=\bQ\oplus M$, hence
$$Z(S(M))=1-q^{-\oh k}x^\bd
=\Exp\rbr{-q^{-\oh k}x^\bd}
=\Exp(Z(M)).
$$
If $(\bd,\bd)$ is even, then $Z(M)=q^{-\oh k}x^\bd$.
On the other hand,
$S(M)
=\coprod_{n\ge0}(M^{\ts n})_{\Si_n}
=\coprod_{n\ge0}M^{\ts n}$, hence
$$Z(S(M))=\sum_{n\ge0}q^{-n\oh k}x^{n\bd}
=\Exp\rbr{q^{-\oh k}x^\bd}
=\Exp(Z(M)).
$$
\end{proof}

\section{\COHA and \COHA-modules}\label{sec coha}

Let $Q$ be a symmetric quiver with the set of vertices $I$.
In this section, we consider the abelian group $L=\bZ^{I}$ equipped with the Euler form 
 \[
\hi(\bd,\be)
=\sum_{i\in Q_0}\bd_i\be_i-\sum_{(a:i\to j)\in Q_1} \bd_i\be_j,
 \]
which is a symmetric bilinear form.

\subsection{Definition of \COHA}
For more details on the results of this section see \cite{kontsevich_cohomological}.
For any $\bd\in\bN^{I}$, we define the space of representations
$$R_\bd=R(Q,\bd)=\bop_{a\colon i\to j}\Hom(\bC^{\bd_i},\bC^{\bd_j})$$
which has the standard action of $G_\bd=\prod_{i\in I}\GL_{\bd_i}(\bC)$.
We define the \idef{cohomological Hall algebra} (\COHA) with the underlying $L$-graded object in $D(\Vect)\iso\Vect^\bZ$ equal
\begin{equation}
\cH_Q=\bop_{\bd\in\bN^I}\cH_{Q,\bd},
\end{equation}
where
\begin{equation}
\cH_{Q,\bd}
=H^\BM_{G_\bd}(R_\bd)[\hi(\bd,\bd)]
\iso H^*_{G_\bd}(R_\bd)[-\hi(\bd,\bd)]
\end{equation}
is considered as an object of $D(\Vect)$
(note that the stack $[R_\bd/G_\bd]$ has dimension $-\hi(\bd,\bd)$).
Multiplication in this algebra is constructed as follows.
Given $\bd,\be\in\bN^I$, consider 
$$V_\bd=\bop_{i\in I}\bC^{\bd_i}\sbs
V_{\bd+\be}=\bop_{i\in I}\bC^{\bd_i+\be_i}$$
and let $R_{\bd,\be}\sbs R_{\bd+\be}$ be the subspace of representations that preserve $V_\bd$.
It is equipped with an action of the parabolic subgroup $G_{\bd,\be}\sbs G_{\bd+\be}$ consisting of maps that preserve $V_\bd$.
We have morphisms of stacks
$$[R_\bd/G_\bd]\xx[R_\be/G_\be]
\xlto{q}[R_{\bd,\be}/G_{\bd,\be}]
\xto{p}[R_{\bd+\be}/G_{\bd+\be}],$$
where $q$ has dimension $-\hi(\be,\bd)$.
These maps induce morphisms in $D(\Vect)$
$$H^\BM_{G_\bd}(R_\bd)\ts H^\BM_{G_\be}(R_\be)
\xto{q^*}
H^\BM_{G_{\bd,\be}}(R_{\bd,\be})[2\hi(\be,\bd)]
\xto{p_*}
H^\BM_{G_{\bd+\be}}(R_{\bd+\be})[2\hi(\be,\bd)].
$$
Taking the composition, we obtain the multiplication map
$$\cH_{Q,\bd}\ts\cH_{Q,\be}\to\cH_{Q,\bd+\be}$$
in $D(\Vect)$.
It was proved in \cite{kontsevich_cohomological} that this multiplication is associative.

\subsection{Shuffle algebra description}
\label{sec:sguffle}
For any $n\ge0$, define the graded algebra
$$
\La_n=\bQ[x_1,\dots,x_n]^{\Si_n},$$
where $x_i$ has degree $2$ and $\Si_n$ is the symmetric group on $n$ elements.
Similarly, for any $\bd\in\bN^{I}$, define
$$\La_\bd=\bts_{i\in I}\La_{\bd_i}
=\bQ[x_{i,k}\col 
i\in I,\, 1\le k\le \bd_i]^{\Si_\bd},\qquad
\Si_\bd=\prod_{i\in I}\Si_{\bd_i}.$$
Then
$$
H^*_{\GL_n}(\pt)\iso \La_n,\qquad
H^*_{G_\bd}(\pt)\iso \La_\bd.$$
This implies that
\begin{equation}\label{coha comp1}
\cH_{Q,\bd}
=H^*_{G_\bd}(R_\bd)[-\hi(\bd,\bd)]
=\La_\bd[-\hi(\bd,\bd)].
\end{equation}
It was proved in \cite{kontsevich_cohomological} that 
the product $\cH_{Q,\bd}\ts\cH_{Q,\be}\to\cH_{Q,\bd+\be}$ is given by the \idef{shuffle product}
\begin{equation}\label{eq:shuffleprod}
f*g=\sum_{\si\in\Sh(\bd,\be)}\si(fgK)
\end{equation}
where the sum runs over all $(\bd,\be)$-shuffles, meaning $\si\in\Si_{\bd+\be}$ satisfying $$\si_i(1)<\dots<\si_i(\bd_i),\qquad
\si_i(\bd_i+1)<\dots<\si_i(\bd_i+\be_i)
\qquad \forall i\in I,$$
and the kernel $K$ is a function in the localization of $\La_\bd\ts\La_\be$ defined by
\begin{equation}
K(x,y)
=\frac
{\prod_{a:i\to j}\prod_{k=1}^{\bd_i}\prod_{\ell=1}^{\be_j}
(y_{j,\ell}-x_{i,k})}
{\prod_{i\in I}\prod_{k=1}^{\bd_i}\prod_{\ell=1}^{\be_i}
(y_{i,\ell}-x_{i,k})}
=\prod_{i,j\in I}
\prod_{k=1}^{\bd_i}\prod_{\ell=1}^{\be_j}
(y_{j,\ell}-x_{i,k})^{-\hi(i,j)}.
\end{equation}

The above formula for the shuffle product implies that
\begin{equation}
f*g=(-1)^{\hi(\bd,\be)}g*f,\qquad
f\in\cH_{Q,\bd},\,g\in\cH_{Q,\be}.
\end{equation}
This formula implies that $\cH_Q$ is a commutative algebra in the symmetric monoidal category $\Vect^{L\xx\bZ}\iso(\Vect^\bZ)^L$
with the braiding arising from the Euler form $\hi$.

\begin{remark}\label{rm:super}
It was observed in \cite[\S2.6]{kontsevich_cohomological} that it is possible to modify multiplication in $\cH_Q$ (non-canonically) to make it super-commutative.
For that, one defines the parity map $p\colon L\to \bZ_2$, $\bd\mto\hi(\bd,\bd)\pmod 2$ and chooses a group homomorphism $\eps\colon L\xx L\to\mu_2=\set{\pm1}$ such that 
\begin{equation}
\eps(\bd,\be)
=(-1)^{p(\bd)p(\be)+\hi(\bd,\be)}
\eps(\be,\bd),\qquad \bd,\be\in L.
\end{equation}
Then the product on $\cH_Q$ defined by 
\begin{equation}\label{eq:modif}
f\star g=\eps(\bd,\be)f*g,\qquad
f\in\cH_{Q,\bd},\,g\in\cH_{Q,\be},
\end{equation}
is super-commutative: $f\star g=(-1)^{p(\bd)p(\be)}g\star f$ for all $f\in\cH_{Q,\bd}$, $g\in\cH_{Q,\be}$. We shall avoid non-canonical choices and not use this sign twist.
Instead we shall interpret $\cH_Q$ as a commutative algebra in $\Vect^{L\xx\bZ}$ or $\Vect^L$ with the symmetric monoidal category structure arising from $\hi$.
\end{remark}

\subsection{Modules over \COHA}\label{sec:modules}
For more details about modules over \cohas see \eg \cite{soibelman_remarks,franzen_cohomology}.
Let $\bw\in\bN^{I}$ be a vector, called a \idef{framing vector}.
We may consider a new (framed) quiver $Q^{\bw}$ by adding a new vertex $\infty$ and $\bw_i$ arrows $\infty\to i$ for all $i\in I$.
For any $\bd\in\bN^{I}$, let $\bar\bd=(\bd,1)\in\bN^{Q_0^\bw}$
and
$$R_{\bd,\bw}^\f
=R(Q^\bw,\bar\bd)
=R_\bd\oplus F_{\bd,\bw}
=R_\bd\oplus\bop_{i\in I}\Hom(\bC^{\bw_i},\bC^{\bd_i}).$$
Let $R^{\f,\st}_{\bd,\bw}\sbs R^\f_{\bd,\bw}$ be the open subset of stable representations, consisting of representations $M$ generated by $M_\infty$.
Then $G_\bd$ acts freely on $R^{\f,\st}_{\bd,\bw}$ and we consider the moduli space
$$\Hilb_{\bd,\bw}=R^{\f,\st}_{\bd,\bw}/G_\bd,$$
called the \idef{non-commutative Hilbert scheme}. Using this moduli space, one can define a module over \COHA, denoted by $\cM_\bw$. Its underlying $L$-graded object in $D(\Vect)\iso\Vect^\bZ$ is
$$\cM_\bw
=\bop_{\bd\in\bN^I}\cM_{\bw,\bd},\qquad
\cM_{\bw,\bd}=H^\BM(\Hilb_{\bd,\bw})[\hi(\bd,\bd)-2\bw\cdot\bd].$$
The \COHA-action is defined as follows.
Let $R^\f_{\bd,\be,\bw}\sbs R^\f_{\bd+\be,\bw}$ be the subspace of framed representations that preserve $V_\bd=\bop_{i\in I}\bC^{\bd_i}$ and let $R^{\f,\st}_{\bd,\be,\bw}\sbs R^\f_{\bd,\be,\bw}$ be the open subset of stable representations.
It is equipped with a free action of the group $G_{\bd,\be}$ and we define 
$\Hilb_{\bd,\be,\bw}=R^{\f,\st}_{\bd,\be,\bw}/G_{\bd,\be}$.
We have morphisms of stacks and algebraic varieties
$$[R_\bd/G_\bd]\xx\Hilb_{\be,\bw}
\xlto{q}\Hilb_{\bd,\be,\bw}
\xto{p}\Hilb_{\bd+\be,\bw},$$
where $q$ has dimension $n=-\hi(\be,\bd)+\bw\cdot\bd$.
These maps induce morphisms in $D(\Vect)$
$$H^\BM_{G_\bd}(R_\bd)\ts 
H^\BM(\Hilb_{\be,\bw})
\xto{q^*}
H^\BM_{}(\Hilb_{\bd,\be,\bw})[-2n]
\xto{p_*}
H^\BM_{}(\Hilb_{\bd+\be,\bw})[-2n].
$$
Taking the composition, we obtain the action map
$$\cH_{Q,\bd}\ts\cM_{\bw,\be}\to\cM_{\bw,\bd+\be}$$
in $D(\Vect).$
The compatibility with the product of \COHA is proved in the same way as the associativity of that product.

The module $\cM_\bw$ can be also described using shuffle algebras \cite{franzen_cohomology,franzen_chow}.
Consider the forgetful map $j\colon R^{\f,\st}_{\bd,\bw}\to R_\bd$ and the corresponding map of stacks $$j\colon \Hilb_{\bd,\bw}\to[R_\bd/G_\bd]$$
having dimension $\bw\cdot\bd$.
It induces a map
$$j^*\colon H^\BM_{G_d}(R_\bd)\to H^\BM(\Hilb_{\bd,\bw})[-2\bw\cdot\bd]$$
which induces a map of $L$-graded objects in $D(\Vect)$
\begin{equation}
j^*\colon \cH_Q=\bop_{\bd}\cH_{Q,\bd}\to\cM_\bw=\bop_\bd\cM_{\bw,\bd}.
\end{equation}
It was proved in \cite[Theorem 5.2.1]{franzen_chow} that this map is an epimorphism of $\cH_Q$-modules and that its kernel is equal to
\begin{equation}\label{coha module1}
\ker (j^*)=
\sum_{\bd'\ge0,\bd>0}\cH_{Q,\bd'}* e_\bd^\bw\cH_{Q,\bd}
=\rbr{e_\bd^\bw\cH_{Q,\bd}\col \bd>0},
\end{equation}
where the last expression means the ideal \wrt the product in $\cH_Q$,
$$e_{\bd}^{\bw}=\prod_{i\in I}\prod_{k=1}^{\bd_i}x_{i,k}^{\bw_i}\in\La_\bd$$
is the product of appropriate powers of elementary symmetric functions and $e_\bd^\bw\cH_{Q,\bd}$ means the product in $\La_\bd$ (corresponding to the cup product in cohomology).

\subsection{Characters of \COHA and \COHA modules}\label{sec:CharCoHA}

Let us collect some formulas for the Poincar\'e series \eqref{part function} of our objects of interest. 
We begin with the Poincar\'e series of \COHA. 

\begin{proposition}\label{char coha}
We have
$$Z(\cH_Q,x,q)
=A_Q(x,q)
:=\sum_{\bd\in\bN^I}
\frac{(-q^\oh)^{-\hi(\bd,\bd)}}{(q\inv)_\bd}x^\bd,
$$
where $(q)_\bd=\prod_{i\in I} (q)_{\bd_i}$
and $(q)_n=\prod_{k=1}^n(1-q^k)$.
\end{proposition}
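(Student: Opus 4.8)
The plan is to compute the Poincar\'e series from the shuffle algebra description \eqref{coha comp1}, which identifies $\cH_{Q,\bd}$ with the free graded module $\La_\bd[-\hi(\bd,\bd)]$ over $\bQ$. First I would recall that the character of $\La_n=\bQ[x_1,\dots,x_n]^{\Si_n}$, with each $x_i$ in cohomological degree $2$ (hence weight $-1$ in the convention of \S\ref{sec:char}, so contributing $q$), is the classical generating function $\ch(\La_n)=\prod_{k=1}^n\frac1{1-q^k}=\frac1{(q)_n}$; this is just the Poincar\'e series of the polynomial ring in the invariants $e_1,\dots,e_n$ of degrees $2,4,\dots,2n$, or equivalently a standard partition-counting identity. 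Taking tensor products over $i\in I$ gives $\ch(\La_\bd)=\prod_{i\in I}\frac1{(q)_{\bd_i}}=\frac1{(q)_\bd}$.

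Next I would apply the shift: by the formula $\ch(M[n])=q^{\oh n}\ch(M)$ from \S\ref{sec:char}, we get $\ch(\cH_{Q,\bd})=\ch(\La_\bd[-\hi(\bd,\bd)])=q^{-\oh\hi(\bd,\bd)}\cdot\frac1{(q)_\bd}$. Finally I would assemble the Poincar\'e series $Z(\cH_Q,x,q)$ according to its definition \eqref{part function}, which weights the $\bd$-component by $(-1)^{(\bd,\bd)}x^\bd=(-1)^{\hi(\bd,\bd)}x^\bd$ (since here the bilinear form on $L$ is the Euler form $\hi$). Combining the sign with the power of $q$ gives the $\bd$-term
\[
(-1)^{\hi(\bd,\bd)}q^{-\oh\hi(\bd,\bd)}\frac1{(q)_\bd}x^\bd
=\frac{(-q^\oh)^{-\hi(\bd,\bd)}}{(q)_\bd}x^\bd,
\]
and summing over $\bd\in\bN^I$ yields $A_Q(x,q)$ — modulo matching $(q)_\bd$ versus $(q\inv)_\bd$, which differ only by an overall monomial factor that I would absorb by being careful about whether the invariants contribute $q$ or $q\inv$ (i.e.\ whether a degree-$2k$ generator has weight $-k$ or $+k$). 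Concretely, with the stated convention $\ch(M)=\sum_k\dim M^k q^{-\oh k}$ a class in cohomological degree $2k$ contributes $q^{-k}$, so $\ch(\La_n)=\prod_{k=1}^n\frac1{1-q^{-k}}=\frac1{(q\inv)_n}$, which is exactly the normalization appearing in the statement.

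I do not anticipate a serious obstacle here: the only real content is the identification \eqref{coha comp1}, which is quoted from \cite{kontsevich_cohomological} and already recorded in the excerpt, plus the elementary Poincar\'e-series bookkeeping. The one place to be careful — and the step most likely to need a sentence of justification — is the sign convention: one must check that the sign $(-1)^{(\bd,\bd)}$ built into the definition of $Z$ in \eqref{part function} is precisely what combines with $q^{-\oh\hi(\bd,\bd)}$ to produce the compact expression $(-q^\oh)^{-\hi(\bd,\bd)}$, using that $(\cdot,\cdot)=\hi$ on $L=\bZ^I$ for the quiver $Q$. Everything else is a direct substitution.
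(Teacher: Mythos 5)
Your proposal is correct and follows essentially the same route as the paper: compute $\ch(\La_n)=\frac1{(q\inv)_n}$ from the invariants $e_1,\dots,e_n$ of degrees $2,4,\dots,2n$, tensor over $i\in I$, apply the shift $\ch(M[n])=q^{\oh n}\ch(M)$, and combine the sign $(-1)^{\hi(\bd,\bd)}$ from the definition of $Z$ with $q^{-\oh\hi(\bd,\bd)}$ to obtain $(-q^\oh)^{-\hi(\bd,\bd)}$. Your mid-proof slip (writing $\frac1{(q)_n}$ with degree-$2$ generators "contributing $q$") is the one genuine pitfall, and you catch and correct it at the end, noting that $q^{-\oh k}$ with $k=2$ gives $q^{-1}$; be sure to use the corrected version from the start in a final write-up.
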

\begin{proof}
Consider $\La_n=\bQ[x_1,\dots,x_n]^{\Si_n}\iso\bQ[e_1,\dots,e_n]$, where $\deg x_i=2$ and $\deg e_i=2i$.
Then
$$\ch(\La_n)=\prod_{k=1}^n\frac1{1-q^{-k}}=\frac1{(q\inv)_n}.$$
This implies that 
$\ch(\La_\bd)=\frac1{(q\inv)_\bd}$
and therefore $\cH_{Q,\bd}=\La_\bd[-\hi(\bd,\bd)]$ has the character
$$\ch(\cH_{Q,\bd})
=\sum_k\dim\La_\bd^{k-\hi(\bd,\bd)}q^{-\oh k}
=q^{-\oh\hi(\bd,\bd)}\ch(\La_\bd)
=\frac{q^{-\oh\hi(\bd,\bd)}}{(q\inv)_\bd}.$$
\end{proof}

\begin{remark}
We also have
$$A_Q(x,q)
=\sum_\bd
\frac{(-q^\oh)^{-\hi(\bd,\bd)}}{(q\inv)_\bd}x^\bd
=\sum_\bd (-q^\oh)^{\hi(\bd,\bd)}
\frac{P_c(R_\bd,q)}{P_c(G_\bd,q)}x^\bd
$$
where we used the fact that $P_c(\bC^n,q)=q^n$ and $P_c(\GL_n(\bC),q)=q^{n^2}(q\inv)_n$.
\end{remark}

Let us now determine the Poincar\'e series of the \COHA module $\cM_\bw$.

\begin{proposition}\label{char module}
We have
$$Z(\M_{\bw},x,q)
=A_Q(x,q)\cdot S_{-2\bw}A_Q(x,q)\inv,
$$
where $S_{\bw}(x^\bd)=q^{\oh\bw\cdot\bd}x^\bd$.
\end{proposition}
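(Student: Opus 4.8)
The plan is to compute $Z(\cM_\bw, x, q)$ directly from the shuffle-algebra description of $\cM_\bw$ provided by the short exact sequence of $\cH_Q$-modules
$$
0 \to \ker(j^*) \to \cH_Q \xrightarrow{\ j^*\ } \cM_\bw \to 0,
$$
together with the identification \eqref{coha module1} of $\ker(j^*)$ as the ideal generated by the elements $e_\bd^\bw \cH_{Q,\bd}$ for $\bd > 0$. First I would pass to Poincaré series: since $Z$ is additive on short exact sequences, $Z(\cM_\bw) = Z(\cH_Q) - Z(\ker j^*) = A_Q - Z(\ker j^*)$, so everything reduces to computing the Poincaré series of that ideal. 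The key point is that, as a graded vector space, $\cH_Q$ is a \emph{free} module over the subalgebra of ``multiplication by $e^\bw$'' in an appropriate sense — more precisely, inside each $\La_\bd$ the cup product by $e_\bd^\bw = \prod_{i,k} x_{i,k}^{\bw_i}$ is injective, and one has a vector-space decomposition of $\La_\bd$ into $e_\bd^\bw \La_\bd$ and a complement. Tracking the shuffle-product ideal structure, $\cH_Q$ decomposes as a free module over itself on the image of $j^*$, with the ``relative'' part accounted for by the $e^\bw$-twist; this is exactly the combinatorial content behind the factorisation $A_Q \cdot S_{-2\bw}A_Q^{-1}$.

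More concretely, I would argue as follows. The operator $S_\bw$ on $R$ defined by $S_\bw(x^\bd) = q^{\oh \bw\cdot\bd} x^\bd$ is the character-level shadow of the degree shift $[\,\cdot\,]$ by $2\bw\cdot\bd$ in $L$-degree $\bd$ (recall $\ch(M[n]) = q^{\oh n}\ch(M)$ and $\cM_{\bw,\bd} = H^\BM(\Hilb_{\bd,\bw})[\hi(\bd,\bd) - 2\bw\cdot\bd]$). Multiplication by $e_\bd^\bw$ in $\La_\bd$ raises cohomological degree by $2\bw\cdot\bd$, which at the level of characters is multiplication by $q^{-\bw\cdot\bd}$, i.e. the operator $S_{-2\bw}$. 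Using the shuffle-product formula and the identification of $\ker(j^*)$ as an ideal, the generating function of $\ker(j^*)$ is obtained from $A_Q$ by applying $S_{-2\bw}$ to the ``positive-degree part'' and then multiplying back by $A_Q$ to account for the free left-module structure over $\cH_Q$; assembling the pieces gives
$$
Z(\cM_\bw) = A_Q \cdot \bigl(1 - S_{-2\bw}(A_Q^{-1}\text{-correction})\bigr) = A_Q \cdot S_{-2\bw}A_Q^{-1},
$$
where the last equality is a formal manipulation with the plethystic/multiplicative structure of $A_Q$ (the identity $A_Q^{-1}$ being well-defined since $A_Q \in 1 + \fm$). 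Alternatively — and this is probably the cleanest route — I would avoid the ideal computation entirely and instead use the known geometric formula for $P_c(\Hilb_{\bd,\bw}, q)$ (Reineke's formula / the framed analogue of the Harder--Narasimhan recursion, \cf \cite{MR2480715, franzen_chow}), which directly expresses $\sum_\bd P_c(\Hilb_{\bd,\bw},q)\,x^\bd$ as such a ratio, and then match it with $\ch$ of the degree-shifted $H^\BM$.

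The main obstacle is bookkeeping the degree shifts and signs correctly: the definition of $\cM_{\bw,\bd}$ carries the shift $[\hi(\bd,\bd) - 2\bw\cdot\bd]$, the Poincaré series \eqref{part function} carries the sign $(-1)^{(\bd,\bd)} = (-1)^{\hi(\bd,\bd)}$, and $A_Q$ already packages the $(-q^\oh)^{-\hi(\bd,\bd)}$ factors; one must check that these combine so that precisely the \emph{framing} shift $2\bw\cdot\bd$ survives as $S_{-2\bw}$ and nothing else leaks through. I expect the self-consistency check — that $Z(\cM_\bw)$ lands in $R$ (no negative powers of $q^\oh$ beyond what $A_Q^{-1}$ allows) and specialises correctly at $\bw = 0$ to $Z(\cH_Q) = A_Q$ — to be the step where the argument is most easily derailed, and where the choice of braiding from $\hi$ in \S\ref{sec:graded} does real work in keeping all signs canonical.
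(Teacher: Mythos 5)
Your preferred ``alternative'' route coincides with the paper's proof: one cites the closed form of the generating series of Poincar\'e polynomials of non-commutative Hilbert schemes, namely
$\sum_\bd (-q^\oh)^{\hi(\bd,\bd)}P_c(\Hilb_{\bd,\bw},q)\,x^\bd = S_{2\bw}A_Q(x,q)\cdot A_Q(x,q)\inv$
(the paper takes this from \cite[Theorem~5.2]{engel_smooth}, rather than from the references you indicate), and then unwinds
$\M_{\bw,\bd}=H^\BM(\Hilb_{\bd,\bw})[\hi(\bd,\bd)-2\bw\cdot\bd]$
using $\ch(M[n])=q^{\oh n}\ch(M)$ and the fact that $S_{-2\bw}$ is a ring homomorphism inverse to $S_{2\bw}$, yielding $Z(\M_\bw)=A_Q\cdot S_{-2\bw}A_Q\inv$. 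Your degree-shift bookkeeping is correct: the shift $[-2\bw\cdot\bd]$ is exactly what produces $S_{-2\bw}$, and $(-1)^{\hi(\bd,\bd)}q^{\oh(\hi(\bd,\bd)-2\bw\cdot\bd)}=(-q^\oh)^{\hi(\bd,\bd)-2\bw\cdot\bd}$ since $(-1)^{2\bw\cdot\bd}=1$.

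Your first route, via the short exact sequence and $Z(\M_\bw)=A_Q-Z(\ker j^*)$, is not actually carried out and I would steer you away from it here. To compute $Z(\ker j^*)$ from the description $\rbr{e_\bd^\bw\cH_{Q,\bd}\col\bd>0}$ you would have to control overlaps among the ideal generators under the shuffle product; you assert a ``free module'' picture without proving it, and the phrase ``$A_Q\cdot\bigl(1-S_{-2\bw}(A_Q^{-1}\text{-correction})\bigr)=A_Q\cdot S_{-2\bw}A_Q^{-1}$'' is not a computation. The paper's tool for making such freeness precise is the identification $\cH_Q\dual\iso\P_Q\iso U(\fL_-)$ together with the Lie-algebra decomposition $\fL=\fL^\bw_-\oplus\fLm\oplus\fL_+$ of Theorem~\ref{module spanning}; but that theorem explicitly \emph{uses Proposition~\ref{char module} as an input}, so deriving the present statement from the ideal structure would be circular given the paper's logical order. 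The Engel--Reineke route is the right one at this stage.
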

\begin{proof}
It follows from \cite[Theorem 5.2]{engel_smooth} that
$$\sum_\bd (-q^\oh)^{\hi(\bd,\bd)}P_c(\Hilb_{\bd,\bw},q)x^\bd
=S_{2\bw}A_Q(x,q)\cdot A_Q(x,q)\inv.$$
As $\M_{\bw,\bd}=H^\BM(\Hilb_{\bd,\bw})[\hi(\bd,\bd)-2\bw\cdot\bd]$, we obtain
$$Z(\M_{\bw})
=\sum_\bd (-q^{\oh})^{\hi(\bd,\bd)-2\bw\cdot \bd}
P_c(\Hilb_{\bd,\bw},q)x^\bd
=A_Q(x,q)\cdot S_{-2\bw}A_Q(x,q)\inv.
$$
\end{proof}

We define \idef{(refined) DT invariants} $\Om_\bd(q)$ of the quiver $Q$ by the formula
\begin{equation}\label{DT1}
Z(\cH_Q,x,q\inv)=A_Q(x,q\inv)=\Exp\rbr{\frac{\sum_\bd
(-1)^{\hi(\bd,\bd)}\Om_\bd(q\inv) x^\bd}{1-q}}.
\end{equation}
By a theorem of Efimov \cite{MR2956038}, we have $\Om_\bd(q)\in\bN[q^{\pm\oh}]$.
We shall give a new proof of this result in Theorem~\ref{th:DT}.
The above formula for the Poincar\'e series of the \coha module can be written in the form
\begin{equation}\label{DT2}
Z(\M_{\bw},x,q\inv)=
\Exp\rbr{\sum_\bd\frac{1-q^{\bw\cdot \bd}}{1-q}(-1)^{\hi(\bd,\bd)}\Om_\bd(q\inv)x^\bd}.
\end{equation}

\section{Vertex algebras and conformal algebras}
\label{sec:VA}

Our goal in this section is to offer a detailed recollection of necessary definitions and results from the theory of vertex algebras and vertex Lie algebras, adapted to the closed symmetric monoidal category $\cC = \Vect^L$ of $L$-graded vector spaces with the braiding arising from a symmetric bilinear form, as defined in \S\ref{sec:graded}.
In particular, we define vertex Lie algebras as a particular case of the general framework of conformal algebras.
We however choose to not use the terminology
``Lie conformal algebras'' since for some readers this would hint at the presence of important conformal symmetries (such as the Virasoro algebra) included as a subalgebra, which is not the case for algebras we consider.
Our exposition merges material from many different sources, and in particular is inspired by the textbooks~\cite{MR1417941,frenkel_vertex,rosellen_vertex}.

\subsection{Graded vertex algebras}
In this section we will introduce $L$-graded vertex algebras.
We start with recalling the classical definition of a vertex superalgebra, and then explain its generalization to the $L$-graded case. 

\subsubsection{Classical vertex (super)algebras}\label{sec:vertex_alg}

Let $\Vect^{\bZ_2}$ be the category of $\bZ_2$-graded vector spaces (also called super vector spaces),
equipped with a symmetric monoidal category structure using the Koszul sign rule \eqref{koszul}.
Given a $\bZ_2$-graded vector space $V=V_0\oplus V_1$ 
and $a\in V_p$, we call $p(a)=p\in\bZ_2$ the \idef{parity} of $a$. 
The $\bZ_2$-graded vector space $\lEnd(V)=\lHom(V,V)$ is equipped with the Lie bracket
\begin{equation}\label{bracket}
[a,b]=ab-(-1)^{p(a)p(b)}ba,\qquad
a,b\in\lEnd(V).
\end{equation}
Define the space of \idef{fields}
$$\cF(V)=\sets{\sum_{n\in\bZ} a(n)z^{-n-1}\in\lEnd(V)\pser{z^{\pm1}}}
{\forall v\in V,\, a(n)v=0\text{ for }n\gg0}.
$$
This means that, for every $\ba(z)\in\cF(V)$ and $v\in V$, we have $\ba(z)v\in V\lser z$.
We equip $\cF(V)$ with the $\bZ_2$-grading, where $\ba(z)=\sum_n a(n)z^{-n-1}$ has parity $p$ if $a(n)$ has parity $p$ for all $n\in\bZ$.
We say that two fields $\ba(z),\bb(z)\in \cF(V)$ are \idef{(mutually) local} if
\begin{equation}\label{locality}
(z-w)^n[\ba(z),\bb(w)]=0,\qquad n\gg0.
\end{equation}

A \idef{vertex (super)algebra} is a triple $(V,Y,\one)$, where 
$V$ is a $\bZ_2$-graded vector space,
$Y\colon V\to \cF(V)$ is a linear map, and $\one\in V_0$ is an element called \idef{vacuum},
such that, for all $a,b\in V$,
\begin{enumerate}
\item $Y(a,z)=\sum_{n}a(n)z^{-n-1}$ has the same parity as $a$.
\item $Y(\one,z)=\id_V$, the identity operator.
\item 
$a(n)\one=0$ for $n\ge0$ and $a(-1)\one=a$.
\item $[T,Y(a,z)]=\dd_z Y(a,z)$, where $T\in\End(V)$ is defined by $T(a)=a(-2)\one$ and $\dd_z=\frac{\dd}{\dd z}$.
\item $Y(a,z)$ and $Y(b,z)$ are local.
\end{enumerate}

\begin{remark}
One can show that, for any $a\in V$,
$$Y(Ta,z)=[T,Y(a,z)]=\dd_z Y(a,z),\qquad
Y(a,z)\one=e^{zT}a.$$
Therefore
$$T(a(n)b)=(Ta)(n)b+a(n)(Tb),\qquad (Ta)(n)=-n a(n-1).$$
\end{remark}

\subsubsection{Graded vertex algebras}\label{graded VA}

In the remaining part of section of \S\ref{sec:VA}, we shall mostly work in the category $\cC^{\oh\bZ}\iso\Vect^{L\xx\oh\bZ}$ (the corresponding $\oh\bZ$-degrees will be called \idef{weights}).
It has a symmetric monoidal category structure induced from that of $\cC$ (without any additional signs coming from weights).
Consider an object $V=\bop_{n\in\oh\bZ}V_n\in \cC^{\oh\bZ}$ bounded below, meaning that $V_n=0$ for $n\ll0$. We define the space of fields
$$\cF(V)=\lEnd(V)\pser{z^{\pm1}}\in\cC^{\oh\bZ},$$
where $z$ has $L$-degree zero and weight $-1$.
Note that if $\sum_n a(n)z^{-n-1}\in\cF(V)$ has weight~$k$, then $a(n)$ has weight $k-n-1$.
This implies that, for any $b\in V$, we have 
$$a(n)b=0,\qquad n\gg0,$$
by the assumption on $V$.
Locality of fields is defined in the same way as in \eqref{locality}, using the bracket on $\lEnd(V)$.
We define an $L$-graded vertex algebra to be a triple $(V,Y,\one)$, where 
$V\in\cC^{\oh\bZ}$ is bounded below,
$Y\colon V\to \cF(V)$ is a linear map, and $\one\in V_{0}$, such that, for all $a,b\in V$,
\begin{enumerate}
\item $Y$ preserves degrees, meaning that, for $Y(a,z)=\sum_na(n)z^{-n-1}$,
the operator
$a(n)$ has the same $L$-degree as $a$ and the weight $\wt(a)-n-1$.
\item $Y(\one,z)=\id_V$.
\item 
$a(n)\one=0$ for $n\ge0$ and $a(-1)\one=a$.
\item $[T,Y(a,z)]=\dd_z Y(a,z)$, where $T\in\lEnd(V)$ is defined by $T(a)=a(-2)\one$.
\item $Y(a,z)$ and $Y(b,z)$ are local.
\end{enumerate}

Note that the map $T\colon V\to V$ has $L$-degree zero and weight $1$.
Note also that the map $Y\colon V\to \cF(V)$ is encoded by the products 
$$(n)\colon V\ts V\to V,\qquad
a\ts b\mto a(n)b,\qquad
n\in\bZ.$$
By definition, a morphism $f\colon V\to W$ between two graded vertex algebras is a degree zero linear map that preserves the vacuum and all these products.
A module over a graded vertex algebra $V$ is a graded vector space $M$ equipped with a degree zero linear map $Y_M\colon V\to \cF(M)$ that preserves products, maps $\one$ to $\id_M$ and maps $V$ to a subspace of mutually local fields. 

\begin{remark}
For a lattice $L$ equipped with a symmetric bilinear form $(\cdot,\cdot)$, it is conventional to use the braiding on $\Vect^{L}$ given by the Koszul signs $(-1)^{p(\al)p(\beta)}$, where $p(\al)=\frm(\al,\al)\pmod 2$. 
This is justified by the desire to work in the familiar setting of vertex (super)algebras.
However, this convention requires one to introduce
certain non-canonical sign twists \kac[\S5.4] (needed to ensure mutual locality of vertex operators in lattice vertex algebras), very similar to the sign twists from Remark \ref{rm:super}. 
Our convention for the braiding in $\Vect^L$ (see \S\ref{sec:graded}) allows one to avoid any non-canonical choices both in the case of \COHA{s} and in the case of vertex algebras.
\end{remark}

\subsubsection{External products}\label{sec:external}

Let $V\in\cC^{\oh\bZ}$ be bounded below, and let $\ba\in \cF(V)_\al$, $\bb\in \cF(V)_\beta$
be two fields (here $\al,\beta\in L$).
For any $n\in\bZ$, we define 
\begin{equation}
[\ba(z)\xx_n\bb(w)]
=(z-w)^n\ba(z)\bb(w) 
-(-1)^{(\al,\beta)}(-w+z)^n \bb(w)\ba(z)
\end{equation}
and consider the binary operation $\eprod n$ on $\cF(V)$ defined by
\begin{equation}\label{ext prod2}
(\ba\eprod n\bb)(w)
=\Res_z[\ba(z)\xx_n\bb(w)].
\end{equation}
We have $\ba\eprod n\bb\in \cF(V)$ by \cite[Lemma 3.1.4]{li_local}.
By Dong's lemma \cite[Proposition 3.2.7]{li_local}, if $\ba,\bb,\bc\in \cF(V)$ and $\ba,\bb$ are local with $\bc$, then $\ba\eprod n\bb$ is local with $\bc$, for all $n\in\bZ$.

The coefficients of the field $\ba\eprod m\bb$ can be written explicitly as 
\begin{equation}
(\ba\eprod m\bb)(n)
=\sum_{k\ge0}(-1)^k\binom mk 
\Big(a(m-k)b(n+k)
-(-1)^{(\al,\beta)+m}b(n+m-k)a(k)\Big).
\end{equation}
In particular, for $m\ge0$, we have
\begin{equation}\label{ext prod coeff1}
(\ba\eprod m\bb)(n)=
\sum_{k=0}^m(-1)^k\binom mk[a(m-k),b(n+k)].
\end{equation}
For $m=-1$, we obtain
$$(\ba\eprod {-1}\bb)(n)
=\sum_{k\ge0}
\Big(a(-k-1)b(n+k)
+(-1)^{(\al,\beta)}b(n-k-1)a(k)\Big).$$

\begin{remark}
The field
\begin{equation}
(\ba\eprod {-1}\bb)(z)=\ba_+(z)\bb(z)+(-1)^{(\al,\beta)}\bb(z)\ba_-(z),
\end{equation}
is the so called \emph{normally ordered product} $\nop{\ba(z)\bb(z)}$ of the fields $\ba$ and $\bb$. Here  $f_+(z)=\sum_{n\ge0}f_nz^n$ and $f_-(z)=\sum_{n<0}f_nz^n$, for $f(z)=\sum_n f_nz^n$.
\end{remark}

\begin{lemma}\label{prodcuts}
If $V$ is a vertex algebra, then $Y\colon V\to \cF(V)$ preserves products.
\end{lemma}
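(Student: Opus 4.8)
The plan is to reduce the statement, via the standard uniqueness machinery for vertex algebras, to a single computation on the vacuum vector. Unwinding the definition of the $n$-th products on both sides, the claim is that for all $a,b\in V$ and all $n\in\bZ$ one has an equality of fields
\[
Y(a(n)b,z)=Y(a,z)\eprod n Y(b,z).
\]
Set $B(z)=Y(a,z)\eprod n Y(b,z)$; by \cite[Lemma 3.1.4]{li_local} this already lies in $\cF(V)$, so it is legitimate to compare it with the field $Y(a(n)b,z)$. First I would record that $B(z)$ is mutually local with every $Y(c,z)$: axiom~(5) gives that $Y(a,z)$ and $Y(b,z)$ are each local with $Y(c,z)$, and then Dong's lemma \cite[Proposition 3.2.7]{li_local} propagates locality through the external product $\eprod n$.

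The heart of the argument is the evaluation $B(z)\one=e^{zT}(a(n)b)$. Starting from
\[
B(w)\one=\Res_z\Big((z-w)^nY(a,z)Y(b,w)\one-(-1)^{(\al,\beta)}(-w+z)^nY(b,w)Y(a,z)\one\Big)
\]
and using the vacuum identity $Y(c,z)\one=e^{zT}c$, both $Y(a,z)\one$ and $Y(b,z)\one$ involve only non-negative powers of the variable; hence in the second summand both $(-w+z)^n$ (expanded, by our convention, in its second summand $z$) and $Y(b,w)Y(a,z)\one=Y(b,w)e^{zT}a$ are free of negative powers of $z$, so $\Res_z$ annihilates it. For the first summand, the translation axiom $[T,Y(a,z)]=\dd_zY(a,z)$ gives the conjugation identity $e^{wT}Y(a,z)e^{-wT}=Y(a,z+w)$, equivalently $Y(a,z)e^{wT}=e^{wT}Y(a,z-w)$ with $z-w$ expanded in $-w$ exactly as in our binomial convention for $(z-w)^n$; combining this with $\Res_u\big(u^nY(a,u)b\big)=a(n)b$ and the fact that $\Res_z$ is unchanged under the substitution $z\mapsto z-w$ of a formal Laurent series yields $B(w)\one=e^{wT}(a(n)b)=Y(a(n)b,w)\one$, the last equality again by the vacuum identity.

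Finally, I would invoke the uniqueness theorem for vertex algebras \kac[Theorem 4.4] --- valid in $\cC^{\oh\bZ}$, since its proof uses only the defining axioms and so transfers verbatim to the braided setting of \S\ref{sec:VA} --- which asserts that a field local with all $Y(c,z)$ whose action on $\one$ equals $e^{zT}d$ must equal $Y(d,z)$. Applying it with $d=a(n)b$ gives $B(z)=Y(a(n)b,z)$, which is the desired identity. The hard part is not conceptual but bookkeeping: one must track carefully which summand each binomial is expanded in, as this is exactly what makes the ``annihilation half'' of the external product vanish against $\one$ and what legitimizes the substitution $z\mapsto z-w$; note that the braiding sign $(-1)^{(\al,\beta)}$ plays no role in the final computation, because the term carrying it drops out entirely.
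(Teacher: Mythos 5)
Your proof is correct but follows a genuinely different route from the paper. The paper derives $Y(a,z)\eprod n Y(b,z)=Y(a(n)b,z)$ in one step by taking $\Res_z$ of the Jacobi identity $[Y(a,z)\xx_n Y(b,w)]=\Res_u u^nw\inv Y(Y(a,u)b,w)\,\de\!\rbr{\tfrac{z-u}{w}}$ and using $\Res_z\de\!\rbr{\tfrac{z-u}w}=w$. You instead invoke Goddard's uniqueness theorem, reducing the claim to two checks: that $B(z)=Y(a,z)\eprod n Y(b,z)$ is local with every $Y(c,z)$ (Dong's lemma, already quoted in \S\ref{sec:external}), and that $B(z)\one=e^{zT}(a(n)b)$. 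Your evaluation of the latter is careful and correct: the ``annihilation half'' of the external product dies against $\one$ because, under the binomial expansion convention, $(-w+z)^n Y(b,w)e^{zT}a$ contains no negative powers of $z$; and the ``creation half'' reduces via translation covariance $Y(a,z)e^{wT}=e^{wT}Y(a,z-w)$ and the residue invariance $\Res_z f(z-w)=\Res_u f(u)$ (for $f\in V\lser u$, $z-w$ expanded in $-w$) to $e^{wT}\Res_u\big(u^nY(a,u)b\big)=e^{wT}(a(n)b)$. You are also right that the braiding sign $(-1)^{(\al,\beta)}$ is irrelevant here and that the uniqueness theorem carries over verbatim to the braided setting, since the two braiding swaps in its proof cancel. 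The trade-off between the two arguments is clear: the paper's proof is shorter but leans on the Jacobi identity, which under the axioms (1)--(5) as listed is itself a nontrivial derived statement; your argument uses only the vacuum axiom, translation covariance, locality, and the uniqueness theorem, so it is slightly longer but closer to first principles.
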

\begin{proof}
By the Jacobi identity \fren[8.8.30], for any $a,b\in V$, we have
\begin{equation}\label{jacobi}
[Y(a,z)\xx_n Y(b,w)]=\Res_u u^nw\inv Y(Y(a,u)b,w)
\de\rbr{\frac{z-u}{w}}
.
\end{equation}
Applying $\Res_z\de\rbr{\frac{z-u}{w}}=w$,
we obtain
$$\Res_z[Y(a,z)\xx_n Y(b,w)]
=\Res_u u^n Y(Y(a,u)b,w)=Y(a(n)b,w).$$
This implies the statement.
\end{proof}

For any $\ba\in \cF(V)$ and $n\in\bZ$, we obtain
$\ba\eprod n\in\lEnd(\cF(V))$ and we define
$$Y:\cF(V)\to\lEnd(\cF(V))\pser{u^{\pm1}},\qquad
Y(\ba,u)=\sum_{n\in\bZ}\ba\eprod nu^{-n-1}.$$
Let $\id_V\in \lEnd(V)\sbs \cF(V)$ be the identity operator
and $T=\dd_z\in\lEnd(\cF(V))$.
Then, for any $\ba\in \cF(V)$, we have \cite[Lemmas 3.1.6-3.1.7]{li_local}
\begin{enumerate}
\item $Y(\id_V,u)\ba=\ba$.
\item $Y(\ba,u)\id_V=e^{uT}\ba$.
\item $Y(T\ba,u)=[T,Y(\ba,u)]=\dd_u Y(\ba,u)$.
\end{enumerate}
The last property implies $(T\ba)\eprod n \bb=-n\ba\eprod{n-1}\bb$.
In particular, $$\ba\eprod{-2}\id_V=(T\ba)\eprod{-1}\id_V=T\ba.$$

A subspace $W\sbs \cF(V)$ is called a \idef{field algebra} if it is closed under products $\eprod n$ and contains ~$\id_V$.
It is called a \idef{local field algebra} if it consists of mutually local fields.
In this case, consider the restriction
$$Y:W\to \lEnd(W)\pser{u^{\pm1}},\qquad \ba\mto\sum_{n\in\bZ} \ba\eprod n u^{-n-1}.$$
By the locality assumption, we have
$[\ba(z)\xx_n\bb(w)]=0$, 
hence $\ba\eprod n \bb=0$, for $\ba,\bb\in W$ and $n\gg0$.
Therefore $Y(\ba,u)$ is a field and we obtain
a linear map $Y:W\to \cF(W)$.
The triple $(W,Y,\id_V)$ is a vertex algebra by \cite[Theorem 3.2.10]{li_local}.
For any subset $S\sbs \cF(V)$, consisting of mutually local (homogeneous) fields, let $\ang S$ be the smallest subspace of $\cF(V)$ containing $S\cup\set{\id_V}$ and closed under all products~$\eprod n$.
Then by Dong's lemma $\ang{S}$ is a local field algebra, and we conclude that it is a vertex algebra.

In particular, assume that $(V,Y,\one)$ is a vertex algebra.
The map $Y\colon V\to \cF(V)$ is injective as $a(-1)\one=a$.
Let $W=Y(V)\sbs \cF(V)$.
Then $W$ consists of mutually local fields by the definition of a vertex algebra.
It is closed with respect to products by Lemma \ref{prodcuts} and it contains $\id_V=Y(\one,z)$.
Therefore $W$ is a vertex algebra and $Y\colon V\to W$ is an isomorphism of vertex algebras.

\subsection{Conformal algebras}\label{sec:conformal}
In this section we will introduce the notion of conformal algebras, closely related to the notion of vertex algebras.
For more details on this subject see \eg \cite{MR1670692,MR1700512}.

\subsubsection{Definition of conformal algebras}
We define a \idef{conformal algebra} to be an object $C\in\cC^{\oh\bZ}$
equipped with a linear map $\dd\colon \fC\to\fC$ (of $L$-degree zero and weight $1$) and bilinear operations $\eprod n\colon\fC\ts\fC\to\fC$ (of $L$-degree zero and weight $-n-1$), for $n\in\bN$,
such that for all $a,b\in \fC$ we have
\begin{enumerate}
\item $a\eprod n b=0$ for $n\gg0$.
\item $\dd(a\eprod n b)=(\dd a)\eprod n b+a\eprod n (\dd b)$.
\item $(\dd a)\eprod n b=-n a\eprod {n-1} b$.
\end{enumerate}

Let us recall a fundamental class of examples of conformal algebras obtained via a procedure reminiscent of the construction of external products $\eprod n$ in \S\ref{sec:external}. Suppose that $\fL\in\cC^{\oh\bZ}$ is an algebra, by which we mean a graded vector space equipped with a bilinear operation 
 $$
[-,-]\colon\fL\ts\fL\to\fL 
 $$
(despite the suggestive notation, we do not assume this operation to satisfy any of the properties of a Lie bracket). 
We define \idef{external products} $\eprod n$
on the space $\fL\pser{z^{\pm1}}$ (where $z$ has $L$-degree zero and weight $-1$) by the formula 
(\cf \eqref{ext prod2})
\begin{equation}\label{ext prod conf}
\ba(z)\eprod n\bb(z)
=\Res_w(w-z)^n[\ba(w),\bb(z)],\qquad n\ge0,
\end{equation}
for $\ba(z)=\sum_{n\in\bZ}a(n)z^{-n-1}$ and $\bb(z)=\sum_{n\in\bZ}b(n)z^{-n-1}$ in $\fL\pser{z^{\pm1}}$.
The coefficient of $z^{-m-1}$ in this series equals (\cf \eqref{ext prod coeff1})
\begin{equation}
(\ba\eprod n\bb)(m)=
\sum_{k=0}^n(-1)^k\binom nk[a(n-k),b(m+k)].
\end{equation}
We also define $\dd=\frac{\dd}{\dd z}\colon \fL\pser{z^{\pm1}}\to\fL\pser{z^{\pm1}}$, so that the map $\dd$ and the products $\eprod{n}$ satisfy the axioms (2) and (3) above.
Note that the axiom (1) is satisfied by the fields $\ba,\bb\in\fL\pser{z^{\pm1}}$ if they are \idef{(mutually) local}, meaning that
$$(z-w)^n[\ba(z),\bb(w)]=(z-w)^n[\bb(w),\ba(z)]=0,\qquad n\gg0.$$
In this case we have \kac[\S2.2]
\begin{equation}\label{bracket from prod}
[a(m),b(n)]=\sum_{k\ge0}\binom mk (\ba\eprod k\bb)(m+n-k).
\end{equation}
We see that if $\fC\sbs \fL\pser{z^{\pm1}}$ is a subspace consisting of pairwise mutually local elements which is closed under all external products and the map $\dd$, then $\fC$ is a conformal algebra.

\subsubsection{Coefficient algebras of conformal algebras}
\label{sec:coeff algebra}
For a conformal algebra $(\fC,\dd,\eprod n)$, we shall now define the \idef{coefficient algebra} $\Coeff(\fC)$.
Note that we have 
$$(\dd a)\eprod0b=0,\qquad
a\eprod 0(\dd b)=\dd(a\eprod 0b),$$
hence $\dd C\sbs C$ is an ideal \wrt the product $\eprod0$ and we can equip $C/\dd C$ with the bracket
\begin{equation}\label{zero prod}
[a,b]=a\eprod0b.
\end{equation}
In particular, we can apply this construction
to the \idef{affinization} of $C$ \kac[\S2.7],
which is defined to be $\tl C=C[t^{\pm1}]$ equipped with the derivation $\tl\dd=\dd\ts1+1\ts\dd_t$ and the products
\begin{equation}
(af)\eprod n(bg)=\sum_{k\ge0}
a\eprod{n+k}b\cdot (\tp{\dd_t}kf)g,\qquad
a,b\in C,\, f,g\in\bQ[t^{\pm1}],
\end{equation}
where $\tp{\dd_t}k=\frac1{k!}\dd_t^k$.
Now we define the algebra of coefficients $$\fL=\Coeff(C)=\tl C/\tl\dd\tl C$$
with the bracket given by $\eprod0$.
More explicitly, let us write $C(n)=Ct^n\sbs\tl C$ and $a(n)=at^n\in C(n)$, for $a\in C$, $n\in\bZ$.
Then $\fL$ is
equal to the quotient of $\tl C=\bigoplus_{n\in \bZ}C(n)$ by the subspace generated by
\begin{equation}\label{coeff rel}
(\dd a)(n)+na(n-1),\qquad a\in \fC,\, n\in\bZ.
\end{equation}
The operation $[-,-]$ given by 
(\cf \eqref{bracket from prod})
\begin{equation}\label{bracket from prod2}
[a(m),b(n)]=
(at^m)\eprod0 (bt^n)=
\sum_{k\ge0}\binom mk (a\eprod kb)(m+n-k),
\end{equation}
descends from $\tl C$ to $\fL$.
We will usually denote the image of $a(n)\in\tl C$ in $\fL$ by $a(n)$.
The map
$$\dd:\fL\to\fL,\qquad
\dd(a(n))=(\dd a)(n)=-na(n-1)$$
is a derivation of the algebra $\fL$.
Let
$$\fL(n)=\im\Big(\fC(n)\hookrightarrow \tl C\to \fL\Big).$$
From \eqref{coeff rel} we obtain that $\fL(n-1)$ embeds into $\fL(n)$ for $n\ne 0$.
Therefore we have increasing filtrations
$$\fL(0)\sbs \fL(1)\sbs\dots\sbs \fL_+:=\sum_{n\ge0}\fL(n),\qquad
\dots\sbs \fL(-2)\sbs \fL(-1)=: \fL_-.$$
All of these subspaces are preserved by the derivation $\dd$.
It is clear from \eqref{bracket from prod2} that
$$[\fL(m),\fL(n)]\sbs\fL(m+n)$$
for $m,n\ge0$ or $m,n<0$.
In particular, $\fL_\pm$ are subalgebras of $\fL$.
Moreover, we have a direct sum decomposition \cite{MR1670692,MR1700512}
\begin{equation}\label{direct sum}
\fL=\fL_+\oplus \fL_-
\end{equation}
of the vector space $\fL$, and an isomorphism of vector spaces
$$\fC\to \fL_-=\fL(-1),\qquad
a\mto a(-1).$$
Under the isomorphism $C\iso\fL_-$, the bracket on $C$ is given by
\cite[4.14]{MR1670692}
\begin{equation}
[a,b]=\sum_{k\ge0}(-1)^k\tp\dd{k+1}(a\eprod kb).
\end{equation}
Note that the map $C\to\tl C$, $a\mto a(0)$,
is a homomorphism of conformal algebras, hence it induces a morphism of algebras
\begin{equation}
C/\dd C\to \fL(0)\sbs\fL=\tl C/\tl\dd\tl C,\qquad a+\dd C\mto a(0).
\end{equation} 
By \ro[Prop.~1.3], the map $C\to \fL(0),\,a\mto a(0),$ has the kernel $\dd C+\sum_{k\ge1}{\ker\dd^k}$, hence $C/\dd C\to\fL(0)$ is an isomorphism if $\dd$ is injective.
It is important to note that the brackets on $C$ and $C/\dd C$ are not related.

The map 
$$\vi\colon \fC\to \fL\pser{z^{\pm1}},\qquad 
a\mto \sum_{n\in\bZ}a(n)z^{-n-1}$$
is a homomorphism of conformal algebras (meaning that it preserves the products and the derivation).
Since $a\mapsto a(-1)$ is an isomorphism between $\fC$ and $\fL(-1)$, this map is injective.

The coefficient algebra $\fL=\Coeff(\fC)$ satisfies the following universal property: if $\fA$ is an algebra and $\psi\colon \fC\to \fA\pser{z^{\pm1}}$ is a morphism of conformal algebras,
then there exists a unique algebra morphism $\varrho\colon \fL\to \fA$ (given by $a(n)\mto\Res_z z^n\psi(a)$) such that the diagram 
\begin{ctikzcd}
&\fC\dlar["\phi"']\drar["\psi"]\\
\fL\pser{z^{\pm1}}\ar[rr,"\varrho"]&&\fA\pser{z^{\pm1}}
\end{ctikzcd}
is commutative. 

\subsubsection{Vertex Lie algebras and their enveloping vertex algebras}
\label{lie conformal}
Using coefficient algebras, one can define particular types of conformal algebras. 
In particular, a conformal algebra $\fC$ is called a 
\idef{\LieCon algebra} (or Lie conformal algebra)
if the algebra $\Coeff(\fC)$ is a Lie algebra.
Equivalently, this means that, for $a,b\in \fC$ and $m,n\in\bN$, we have
\begin{enumerate}
\item $\sbr{a\eprod m,b\eprod n}=\sum_{k=0}^m\binom mk(a\eprod k b)\eprod{m+n-k}$ (Jacobi identity), or equivalently\\
$(a\eprod mb)\eprod n=\sum_{k=0}^m(-1)^k\binom mk
\sbr{a\eprod{m-k},b\eprod{n+k}}$.
\item $a\eprod nb=(-1)^{1+\hi(\al,\beta)}\sum_{k\ge0}
(-1)^{n+k}\frac{\dd^k}{k!}b\eprod{n+k}a$, for $a\in\fC_\al,b\in\fC_\beta$ (anti-commuta\-tivity).
\end{enumerate}
Under these conditions, the bracket \eqref{zero prod} defined on $C/\dd C$ is a Lie bracket, see \eg \kac[Rem.~2.7a].
Let $\fC$ be a \LieCon algebra and $\fL=\Coeff(\fC)$ be its coefficient algebra.
We define the \idef{Verma module}
$$V:=U(\fL)\ts_{U(\fL_+)}\bQ\iso U(\fL_-),$$
where \bQ is equipped with the trivial $\fL_+$-module structure.
Let $\one\in V$ be the cyclic vector of this module.
One can show that, for any $a\in \fC$ and $v\in V$, we have $a(n)v=0$ for $n\gg0$.
This implies that $\vi(a)=\sum_n a(n)z^{-n-1}\in \fL\pser{z^{\pm1}}$ induces a field $\psi(a)\in\cF(V)$.
The map $\psi:\fC\to \cF(V)$ is injective as the map $\fC\iso \fL_-\to\lEnd(V)$ is injective.

The image of $\psi\colon \fC\to \cF(V)$ consists of mutually local fields and generates a vertex algebra $W\sbs\cF(V)$.
By \ro[\S2.4], the vertex algebra $W$ has a structure of an $\fL$-module and the map $W\to V$, $\ba(z)\mto a(-1)\one$, is an isomorphism of $\fL$-modules.
We shall call the vertex algebra $W\iso V$ the \idef{universal enveloping vertex algebra} of $\fC$, and denote it by $\cU(C)$.
Note that we have 
a commutative diagram of graded vector spaces
\begin{ctikzcd}
C\rar["\sim"]\dar[hook]&\fL_-\dar[hook]\\
\cU(C)\rar["\sim"]&U(\fL_-)
\end{ctikzcd}

If $V$ is a vertex algebra, then $V$ equipped with the products $(n)\colon V\ts V\to V$, $n\ge0$, and the map $\dd=T\colon V\to V$ is a \LieCon algebra.
By \cite[Theorem 5.5]{MR1670692}, the corresponding forgetful functor from the category of vertex algebras to the category of \LieCon algebras has a left adjoint functor which is precisely the universal enveloping algebra functor $C\mto\cU(C)$.

\oper{Coalg}
\subsection{Vertex bialgebras}
Let $\cC$ be a symmetric monoidal category and $(V,\De,\eps)$ be a (counital) coalgebra in $\cC$.
We define the set of \idef{group-like elements}
\begin{equation}
G(V)=\Hom_{\Coalg}(\one,V),
\end{equation}
where $\one\in\cC$ is the unit object equipped with the canonical structure of a counital coalgebra.
We say that $V$ is \idef{connected} if $G(V)$ contains exactly one element.

For example, let $L=\bZ^I$ be equipped with a symmetric bilinear form and let $\cC=\Vect^L$ be the corresponding symmetric monoidal category.
Let $V\in\cC$ be a coalgebra with degrees concentrated in $\bN^I\sbs L$ and $\dim V_0=1$.
Then $V$ is connected.

Following \cite[\S4]{li_smash}, we define a \idef{(local) vertex bialgebra} to be a vertex algebra $V$ equipped with a coalgebra structure $(V,\De,\eps)$ such that $\De:V\to V\ts V$ and $\eps:V\to\bQ$ are homomorphisms of vertex algebras.
We say that a vertex bialgebra $V$ is cocommutative if its coproduct is cocommutative.
We define the subspace of \idef{primitive elements}
\begin{equation}
P(V)=\sets{x\in V}{\De(x)=x\ts 1+1\ts x}.
\end{equation}
An important family of cocommutative connected vertex bialgebras arises from vertex Lie algebras \cite{li_smash,han_cocommutative}.

\begin{proposition}\cite[\S4]{li_smash}
\label{li-coprod}
Let $C$ be a vertex Lie algebra and $V=\cU(C)$ be its universal enveloping vertex algebra.
Then there exists vertex algebra homomorphisms 
$$\De:V\to V\ts V,\qquad\eps:V\to\bQ,$$
uniquely determined by
\begin{gather*}
\De(\one)=\one\ts\one,\qquad \eps(\one)=1,\\
\De(x)=x\ts\one+\one\ts x,\qquad \eps(x)=0\qquad\forall x\in C.
\end{gather*}
The vertex algebra $V$ equipped with $\De$ and $\eps$ is a cocommutative vertex bialgebra.
\end{proposition}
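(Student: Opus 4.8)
The plan is to verify that the stated maps $\De$ and $\eps$ exist by invoking the adjunction between vertex algebras and \LieCon algebras, and then to check cocommutativity by a uniqueness argument. First I would observe that $\bQ=\one$ is a vertex algebra (in fact a vertex bialgebra) and that $V\ts V$ carries a canonical vertex algebra structure, with the $n$-th products built from those of $V$ via the braiding $\si$ on $\cC$ (this is the standard tensor product of vertex algebras, adapted to the symmetric monoidal category $\cC^{\oh\bZ}$ exactly as the external products in \S\ref{sec:external} were adapted). Both $\eps\colon V\to\bQ$ and the map $x\mapsto x\ts\one+\one\ts x$ on the generating subspace $C$ are then seen to be morphisms of \LieCon algebras: for $\eps$ this is immediate since $C$ maps to $0$, and for the diagonal-type map one checks on $C\subset V$ that $a\ts\one+\one\ts a$ and $b\ts\one+\one\ts b$ satisfy, under the $n$-products of $V\ts V$, precisely the relations coming from the $n$-products $a\eprod n b$ in $C$ (the cross terms involve $\one\eprod n(-)$ and $(-)\eprod n\one$, which vanish for $n\ge0$ and reduce to identity/translation for $n=-1$, so no obstruction arises). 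By the universal property of $\cU(C)$ — that is, the left adjointness of $C\mapsto\cU(C)$ to the forgetful functor recalled at the end of \S\ref{lie conformal} — each of these \LieCon algebra morphisms extends uniquely to a vertex algebra homomorphism out of $V=\cU(C)$, giving $\De$ and $\eps$ with the asserted normalization, and this uniqueness is exactly the uniqueness claimed in the proposition.

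Next I would check the coalgebra axioms. Counitality $(\eps\ts\Id)\De=\Id=(\Id\ts\eps)\De$ and coassociativity $(\De\ts\Id)\De=(\Id\ts\De)\De$ are each equalities between two vertex algebra homomorphisms $V\to V$ (respectively $V\to V\ts V\ts V$), so by the same universal property it suffices to verify them on the generating space $C$ and on $\one$, where they are trivial: on $x\in C$ both sides of coassociativity give $x\ts\one\ts\one+\one\ts x\ts\one+\one\ts\one\ts x$, and both sides of counitality give $x$. Cocommutativity is proved the same way: $\tau\circ\De$ and $\De$, where $\tau$ is the braiding swap $V\ts V\to V\ts V$, are both vertex algebra homomorphisms $V\to V\ts V$ (here one uses that $\tau$ itself is a vertex algebra isomorphism, which holds because the tensor vertex algebra structure on $V\ts V$ was defined symmetrically using $\si$), and they agree on $C$ since $\tau(x\ts\one+\one\ts x)=\one\ts x+x\ts\one$ — note that the sign $(-1)^{(\al,0)}=1$ contributes nothing — and on $\one$. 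Hence $\tau\circ\De=\De$ on all of $V$.

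Finally, one must confirm that with these $\De$ and $\eps$ the pair $(V,\De,\eps)$ is a vertex bialgebra in the sense defined just above the proposition, i.e. that $\De$ and $\eps$ are vertex algebra homomorphisms — but that is precisely how they were produced, so nothing further is needed. The one genuinely careful point, and the step I expect to be the main obstacle, is the construction of the tensor-product vertex algebra structure on $V\ts V$ in the $L$-graded symmetric monoidal setting: one needs the vertex operator $Y_{V\ts V}(a\ts b,z)$ to be well-defined, to land in $\cF(V\ts V)$, and to satisfy locality, with all signs governed consistently by the braiding $\si$ arising from the bilinear form rather than by a Koszul parity. This is routine in the classical super case, and the constructions of \S\ref{sec:external} show the bookkeeping carries over, but it is where the $L$-graded framework of the paper must be invoked with some attention to the fact that $\si$ depends on $(\al,\beta)$ and not merely on parities. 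Once that structure is in place, every other step is a formal consequence of the adjunction and hence reduces to a check on the small space $C$.
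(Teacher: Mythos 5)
The paper gives no proof of this proposition, citing Li's smash product paper; so there is nothing in-text to compare your argument against. Your argument via the adjunction between vertex algebras and vertex Lie algebras is correct and is essentially the standard one: existence, uniqueness, the coalgebra axioms, and cocommutativity all reduce, as you say, to checks on the generating space $C\sbs\cU(C)$, given the tensor-product vertex algebra structure on $V\ts V$. You rightly identify that structure, in the $L$-graded symmetric monoidal setting of \S\ref{sec:graded}, as the one nonroutine input; it is a direct adaptation of the super case with the braiding $\si$ supplying all signs, and having it in place also makes $\si\colon V\ts V\to V\ts V$ a vertex algebra automorphism, which is exactly what your cocommutativity step uses.
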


\begin{proposition}[{\cite[Prop.~3.9]{han_cocommutative}}]
\label{li-coprod2}
Let $C$ be a vertex Lie algebra, 
$\fL=\fL_-\oplus\fL_+$ be its algebra of coefficients and $V=\cU(C)=U(\fL)\ts_{U(\fL_+)}\bQ$ be its universal enveloping vertex algebra.
Then the canonical isomorphism $V\iso U(\fL_-)$ is an isomorphism of coalgebras.
Moreover, the coalgebra $V$ is connected and $P(V)=C$.
\end{proposition}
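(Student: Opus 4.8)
The plan is to show that the canonical linear isomorphism $\iota\colon V=\cU(C)\to U(\fL_-)$ recalled in~\S\ref{lie conformal} is an isomorphism of coalgebras, after which the remaining assertions are immediate. Recall that $\iota$ comes from the first version of Poincar\'e--Birkhoff--Witt in~\S\ref{sec:graded}, applied to the splitting $\fL=\fL_+\oplus\fL_-$ into Lie subalgebras: it sends $\one$ to $1$, restricts to the identification $C\iso\fL_-$ of the commutative diagram preceding the statement, and — because $a(-1)\in\fL_-\sbs\fL$ acts on $V=U(\fL)\ts_{U(\fL_+)}\bQ$ by left multiplication, and this operator is precisely the $(-1)$-mode of $Y(a,z)$ for $a\in C$ — it intertwines the action on $V$ of the $(-1)$-mode of any $a\in C$ with left multiplication by $a(-1)$ on $U(\fL_-)$. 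Granting for the moment that $\iota$ carries $\De$ and $\eps$ of Proposition~\ref{li-coprod} to the standard Hopf coproduct $\De_0$ of $U(\fL_-)$ (for which $\fL_-$ is primitive) and its augmentation $\eps_0$, the rest is routine: by the second version of PBW, $U(\fL_-)$ is isomorphic as a coalgebra to $S^c(\fL_-)$, hence conilpotent, hence has a unique group-like element, so $V$ is connected; and since the ground field has characteristic $0$, the primitives of $U(\fL_-)$ are exactly $\fL_-$, so $P(V)=\iota\inv(\fL_-)=C$.

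The crux, and the only genuine obstacle, is the coproduct comparison; its difficulty is that $\De_0$ is characterised as a homomorphism of \emph{associative} algebras, whereas $\De$ is known (Proposition~\ref{li-coprod}) only to be a homomorphism of \emph{vertex} algebras, determined by $\De(\one)=\one\ts\one$ and $\De(a)=a\ts\one+\one\ts a$ for $a\in C$. Two facts bridge this gap: (i) $V=\cU(C)$ is linearly spanned by $\one$ together with the iterated $(-1)$-products $a_1(-1)\big(a_2(-1)(\cdots(a_k(-1)\one)\cdots)\big)$, $a_i\in C$ — indeed $\iota$ maps these to the spanning set $a_1(-1)\cdots a_k(-1)\cdot1$ of $U(\fL_-)$, because $\fL_-=\{a(-1):a\in C\}$ generates $U(\fL_-)$ as an algebra and, as noted above, each $a(-1)$ acts there by left multiplication; and (ii) in the tensor-product vertex algebra $V\ts V$ the $(-1)$-mode of $Y(a\ts\one+\one\ts a,z)=Y(a,z)\ts\Id+\Id\ts Y(a,z)$ is the operator $a(-1)\ts\Id+\Id\ts a(-1)$, which under $\iota\ts\iota$ becomes left multiplication by $a(-1)\ts1+1\ts a(-1)=\De_0(a(-1))$ on $U(\fL_-)\ts U(\fL_-)$.

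Using (i) and (ii) I would prove $(\iota\ts\iota)\circ\De=\De_0\circ\iota$ by induction on the length $k$ of the iterated product $v=a_1(-1)\big(\cdots(a_k(-1)\one)\cdots\big)$. For $k=0$ both sides send $\one$ to $1\ts1$. For $v=a_1(-1)v'$ with $v'$ of length $k-1$,
\begin{align*}
(\iota\ts\iota)\De(v)
&=(\iota\ts\iota)\big((a_1\ts\one+\one\ts a_1)(-1)\,\De(v')\big)
=\De_0(a_1(-1))\cdot(\iota\ts\iota)\De(v')\\
&=\De_0(a_1(-1))\cdot\De_0(\iota(v'))
=\De_0\big(a_1(-1)\cdot\iota(v')\big)=\De_0(\iota(v)),
\end{align*}
where the first equality uses that $\De$ preserves $(-1)$-products, the second uses (ii), the third is the induction hypothesis, the fourth uses that $\De_0$ is an algebra map, and the last uses $\iota(a_1(-1)v')=a_1(-1)\cdot\iota(v')$. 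The counit is easier: $\eps\circ\iota\inv\colon U(\fL_-)\to\bQ$ is a vertex-algebra homomorphism sending $1$ to $1$ and annihilating $\fL_-$ (as $\eps$ annihilates $C$), hence — preserving $(-1)$-products — it annihilates every product $a_1(-1)\cdots a_k(-1)$ with $k\ge1$, \ie the augmentation ideal, so $\eps\circ\iota\inv=\eps_0$. All signs are automatic: every map in sight is a degree-zero morphism of $\cC$ and so commutes with the braidings entering $\De_0$, the algebra structure on $U(\fL_-)\ts U(\fL_-)$, and the vertex-algebra structure on $V\ts V$. Thus $\iota$ is an isomorphism of coalgebras and, together with the first paragraph, the proposition follows; the entire argument hinges on the single dictionary ``$(-1)$-products in $\cU(C)\leftrightarrow$ left multiplications in $U(\fL_-)$'', everything else being formal.
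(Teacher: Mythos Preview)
The paper does not give its own proof of this proposition; it is simply cited from \cite[Prop.~3.9]{han_cocommutative}. Your argument is correct and self-contained: the inductive comparison of $\De$ with $\De_0$ via the $(-1)$-products works because $\De$ is a vertex-algebra map (so preserves all $n$-products), the iterated $(-1)$-products of elements of $C$ do span $V$ under the PBW isomorphism $\iota$, and the sign bookkeeping is indeed absorbed by working with degree-zero maps in the braided category $\cC$.

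It is worth noting that the paper, while not proving this general statement, does prove a closely related special case in Proposition~\ref{prop:coalg}: there the coproduct on $\P_Q\iso U(\fL)\ts_{U(\fL_+)}\bQ$ is obtained directly by descending the Hopf coproduct of $U(\fL)$, using only that $\De(X_+)\sbs X_+\ts\bQ+\bQ\ts X_+$. That argument bypasses the vertex-algebra characterisation of $\De$ entirely and uses the full $U(\fL)$-module structure (all modes $a(n)$, not just $a(-1)$); it is shorter because uniqueness of a $U(\fL)$-module map out of a cyclic module is immediate. Your route instead exploits the universal property of $\cU(C)$ as a vertex algebra (Proposition~\ref{li-coprod}) and reduces everything to the single dictionary ``$(-1)$-mode of $a\in C$ $=$ left multiplication by $a(-1)\in\fL_-$''. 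Both approaches buy the same thing; yours has the virtue of making transparent why the vertex-bialgebra coproduct coincides with the Lie-theoretic one.
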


Conversely, we can associate a vertex Lie algebra with a vertex bialgebra.

\begin{proposition}[{\cite[Prop.~4.8]{han_cocommutative}}]
Let $V$ be a vertex bialgebra.
Then $P(V)$ is a vertex Lie subalgebra of $V$.
\end{proposition}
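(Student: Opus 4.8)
The plan is to exhibit $P(V)$ as a graded subobject of $V$ in $\cC^{\oh\bZ}$ that is closed under the translation operator $T$ and under the $n$-th products $(n)$ for $n\ge0$. Once that is done, nothing further is required: by the forgetful functor recalled in \S\ref{lie conformal}, $V$ itself is a \LieCon algebra under $T$ and the products $(n)$, $n\ge0$, and every defining identity of a \LieCon algebra --- the conformal-algebra axioms together with the Jacobi and anti-commutativity identities --- is an identity among these operations, hence is inherited by any subspace closed under them. Likewise the axiom $a\eprod nb=0$ for $n\gg0$ holds in $V$ and so a fortiori in $P(V)$.

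First I would note that $P(V)$ is a subobject in $\cC^{\oh\bZ}$: it is the kernel of the linear map $x\mapsto\De(x)-x\ts\one-\one\ts x$, which is a morphism in $\cC^{\oh\bZ}$ since $\De$ preserves $L$-degree and weight and $\one\in V_0$ has $L$-degree and weight zero; hence its kernel is an $L$-graded, weight-graded subobject. Closure under $T$ is then immediate: as $\De$ is a homomorphism of vertex algebras it intertwines $T_V$ with the translation operator $T\ts\id+\id\ts T$ of the tensor-product vertex algebra $V\ts V$, and $T\one=\one(-2)\one=0$ by the vacuum axiom, so for $a\in P(V)$ one gets $\De(Ta)=Ta\ts\one+\one\ts Ta$.

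The substantive point is closure under $(n)$ for $n\ge0$. For $a,b\in P(V)$, the fact that $\De$ is a vertex algebra homomorphism gives $\De(a(n)b)=\De(a)(n)\De(b)$, the product now taken in $V\ts V$, whose state--field map has the form $Y_{V\ts V}(u\ts v,z)(c\ts d)=(-1)^{(\beta,\gamma)}(Y(u,z)c)\ts(Y(v,z)d)$ for $v\in V_\beta$, $c\in V_\gamma$. Substituting $\De(a)=a\ts\one+\one\ts a$ and $\De(b)=b\ts\one+\one\ts b$ and expanding into four terms, I would evaluate each using $\one(\ell)=\de_{\ell,-1}\id_V$ and the vacuum vanishing $a(n)\one=0$ for $n\ge0$: the two mixed terms $(a\ts\one)(n)(\one\ts b)$ and $(\one\ts a)(n)(b\ts\one)$ vanish (each forces a factor $a(n)\one$), while $(a\ts\one)(n)(b\ts\one)$ contributes $a(n)b\ts\one$ and $(\one\ts a)(n)(\one\ts b)$ contributes $\one\ts a(n)b$. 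Hence $\De(a(n)b)=a(n)b\ts\one+\one\ts a(n)b$, i.e. $a(n)b\in P(V)$. (Note that this fails for $n<0$, e.g. $(a\ts\one)(-1)(\one\ts b)$ contributes $a\ts b$, which is why only the products $(n)$ with $n\ge0$ matter --- exactly the ones a vertex Lie algebra sees.)

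The only step demanding genuine care is the bookkeeping of the braiding signs of $\cC^{\oh\bZ}$, both in the very definition of the tensor-product vertex algebra $V\ts V$ and in the four-term expansion above; a careless computation can introduce spurious signs. What rescues the argument is that in every term surviving the vacuum identities the transposition implicit in $Y_{V\ts V}$ moves a factor past $\one$, which has zero $L$-degree, so all the relevant signs are $+1$ and the computation collapses to the classical one.
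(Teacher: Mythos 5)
The paper gives no proof of this proposition: it is stated with the attribution to \cite[Prop.~4.8]{han_cocommutative}, so there is nothing in the paper to compare against. Your argument is, however, a correct and self-contained proof, and it is the natural one.

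The structure is right: by the forgetful functor of \S\ref{lie conformal} (and \cite[Theorem 5.5]{MR1670692}), a vertex algebra is automatically a vertex Lie algebra under $T$ and the non-negative products, so it suffices to show that $P(V)$ is a graded subspace closed under $T$ and the products $(n)$ for $n\ge0$; all the remaining axioms are identities and pass to any such subspace. Your verification of closure is correct. For $T$, the fact that $\De$ is a vertex algebra morphism gives $\De\circ T_V = T_{V\ts V}\circ\De$ with $T_{V\ts V}=T\ts\id+\id\ts T$, and $T\one=\one(-2)\one=0$, so $\De(Ta)=Ta\ts\one+\one\ts Ta$. For the products, the four-term expansion of $\De(a)(n)\De(b)$ is exactly as you describe: writing $(u\ts v)(n)(c\ts d)=(-1)^{(\beta,\gamma)}\sum_{p+q=n-1}u(p)c\ts v(q)d$ for $v\in V_\beta$, $c\in V_\gamma$, and using $\one(\ell)=\de_{\ell,-1}\id_V$, the mixed terms are forced to contain $a(n)\one$ with $n\ge0$, which vanishes, while the two surviving terms give $a(n)b\ts\one$ and $\one\ts a(n)b$. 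Your observation about the braiding signs is also accurate and worth emphasizing: the sign $(-1)^{(\al,\beta)}$ \emph{does} appear in one of the mixed terms $(\one\ts a)(n)(b\ts\one)$, but that term dies by the vacuum identity, so every surviving term involves transposition past $\one$ (of $L$-degree $0$) and the signs are trivial. The remark that this fails for $n<0$ (so $P(V)$ is a vertex \emph{Lie} subalgebra, not a vertex subalgebra) correctly explains the scope of the statement.
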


The next result is an analogue of the Milnor-Moore theorem for vertex bialgebras.

\begin{theorem}[{\cite[Theorem ~4.13]{han_cocommutative}}]
Let $V$ be a cocommutative connected vertex bialgebra.
Then there is a canonical isomorphism $\cU(P(V))\iso V$ of vertex bialgebras.
\end{theorem}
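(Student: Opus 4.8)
The plan is to reproduce, in the vertex setting, the classical proof that a connected cocommutative bialgebra over a field of characteristic zero is the universal envelope of its Lie algebra of primitives. Write $C=P(V)$; by \cite[Prop.~4.8]{han_cocommutative} this is a vertex Lie subalgebra of $V$, where $V$ is regarded as a vertex Lie algebra via its products $(n)$, $n\ge0$, and the operator $\dd=T$. The inclusion $\io\colon C\emb V$ is then a morphism of vertex Lie algebras, so the universal property of $\cU$ recorded in \S\ref{lie conformal} yields a unique vertex algebra homomorphism $\Phi\colon\cU(C)\to V$ with $\Phi|_C=\io$. This $\Phi$, canonical by construction, is the map I claim is an isomorphism of vertex bialgebras.

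First I would verify that $\Phi$ is also a morphism of coalgebras. By Proposition~\ref{li-coprod}, $\cU(C)$ is a cocommutative vertex bialgebra, and both $\De_V\circ\Phi$ and $(\Phi\ts\Phi)\circ\De_{\cU(C)}$ are vertex algebra homomorphisms $\cU(C)\to V\ts V$. They agree on the vacuum, and on every $x\in C$, where both take the value $x\ts\one+\one\ts x$ because $C$ consists of primitive elements of $V$; as $\cU(C)$ is generated as a vertex algebra by $C\cup\{\one\}$ (immediate from its construction in \S\ref{lie conformal} as the vertex subalgebra generated by the fields attached to $C$), the two homomorphisms coincide. The same argument for the counit gives $\eps_V\circ\Phi=\eps_{\cU(C)}$, so $\Phi$ is a morphism of connected cocommutative vertex bialgebras.

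It then remains to prove that $\Phi$ is bijective, and for this I would pass to ordinary $\bQ$-coalgebras and to the coefficient Lie algebra $\fL=\Coeff(C)=\fL_-\oplus\fL_+$. Injectivity is routine: by Proposition~\ref{li-coprod2} both $\cU(C)$ and $V$ are connected coalgebras with $P(\cU(C))=C=P(V)$, and $\Phi$ restricts to the identity on primitives, so a straightforward induction on the coradical filtration (using that the reduced coproduct strictly lowers the filtration degree and that $\Phi$ preserves it) shows $\Phi$ is injective. For surjectivity, observe that the modes $a(n)$, $a\in C$, $n\in\bZ$, act on $V$ and satisfy the defining relations of $\fL$ — the relation $(\dd a)(n)=-n\,a(n-1)$ holds in any vertex algebra, and the bracket $[a(m),b(n)]$ is given by the vertex-algebra commutator formula \eqref{bracket from prod} together with the fact that $a\eprod k b\in C$ for $k\ge0$ — while $\fL_+$ annihilates $\one$; hence $\Phi$ is identified with the action map $U(\fL)\ts_{U(\fL_+)}\bQ\to V$, $u\ts\one\mapsto u\cdot\one$, and its surjectivity is equivalent to the assertion that $V$ is generated as a vertex algebra by its primitive elements. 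This assertion — the vertex analogue of the fact that a connected cocommutative Hopf algebra over $\bQ$ is generated by its primitives — is the substance of the theorem and the main obstacle; I would prove it following \cite{han_cocommutative}, adapting the classical characteristic-zero arguments à la Milnor--Moore (for instance via an Eulerian-type idempotent built from the coproduct on the coradical filtration, where cocommutativity and working over $\bQ$ enter essentially) to the vertex setting. With $\Phi$ both a vertex algebra homomorphism and bijective, it is the desired isomorphism of vertex bialgebras.
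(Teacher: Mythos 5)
The paper does not prove this statement: it is cited verbatim from \cite[Theorem 4.13]{han_cocommutative}, so there is no internal proof to compare your argument against. On its own merits, your set-up is correct: the canonical map $\Phi\colon\cU(P(V))\to V$ comes from the enveloping-algebra adjunction applied to the vertex Lie subalgebra $P(V)\subset V$; the check that $\Phi$ is a coalgebra map by comparing two vertex algebra morphisms on the generators $P(V)$ and $\one$ is right; the injectivity argument via the coradical filtration, using Proposition~\ref{li-coprod2} to identify the primitives of $\cU(P(V))$, is sound; and the identification of $\Phi$ with the action map $U(\fL)\ts_{U(\fL_+)}\bQ\to V$, via the commutator formula \eqref{bracket from prod} and the closure of $P(V)$ under the non-negative products, is a clean way to organise it.

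The genuine gap is exactly where you flag it. Surjectivity --- equivalently, that $V$ is generated as a vertex algebra by its primitives --- is the substance of the theorem, and your plan for it is to ``follow \cite{han_cocommutative}'', which is the very reference the theorem is quoted from; the reduction is therefore circular and the crux is left unproved. Moreover the hint you give (an Eulerian-type idempotent) does not transfer directly: the classical construction $\log^*(\id)$ lives in the convolution algebra $(\lHom(V,V),*)$, whose product requires an \emph{associative} multiplication on $V$, and the $(-1)$-product of a vertex algebra is not associative. So there is no ready-made ordinary bialgebra structure on $V$ to which Cartier--Milnor--Moore can be applied, and the generation-by-primitives argument has to be rebuilt in the vertex setting from scratch --- which is what Han's proof does and your sketch does not.
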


\subsection{Free vertex algebras}\label{sec:freevertex}

As was observed already in \cite[\S4]{MR843307}, the locality axiom contains a quantifier that does not allow one to define the free vertex algebra on a given set of generators, but if one restricts to some specific orders of locality, it is possible to define such a universal object; this has been done in \cite{MR1700512}. In this section we recall the definition of the locality order and explain how free vertex algebras of given non-negative locality are constructed. 

\subsubsection{Locality order}

\begin{definition}\label{def:locality order}
Let $V\in\cC^{\oh\bZ}$ be a bounded below object.
We say that two fields $\ba\in \cF(V)_\al$ and $\bb\in\cF(V)_\beta$ are (mutually)
\idef{local of order $N\in\bZ$} if
\begin{equation}\label{locality1}
(z-w)^N\ba(z)\bb(w) 
-(-1)^{\frm(\al,\beta)}(-w+z)^N \bb(w)\ba(z)
=0,
\end{equation}
where the binomial expansion convention is used.
\end{definition}

We note that for $N\ge 0$ the binomial expansion is finite and the locality condition becomes simply 
$$(z-w)^N[\ba(z),\bb(w)]=0,$$
being in agreement with the definition of locality (of unspecified order) in \S\ref{sec:vertex_alg}. 

\begin{lemma}\label{lm:loc equiv}
Let $V$ be a vertex algebra and $a,b\in V$.
Then $Y(a,z),Y(b,z)$ are local of order $N$ if and only if
$a(n)b=0$ for all $n\ge N$.
\end{lemma}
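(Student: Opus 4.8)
Let $V$ be a vertex algebra and $a,b\in V$. Then $Y(a,z),Y(b,z)$ are local of order $N$ if and only if $a(n)b=0$ for all $n\ge N$.

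The plan is to derive both directions from three ingredients that are already in place: the $n$-th product formula $Y(a(n)b,w)=\Res_z[Y(a,z)\xx_n Y(b,w)]$ (valid for every $n\in\bZ$), which is exactly what the displayed computation in the proof of Lemma~\ref{prodcuts} establishes; the Jacobi identity~\eqref{jacobi}; and the injectivity of $Y$ (valid since $a(-1)\one=a$). First I would note that, spelling out Definition~\ref{def:locality order} for the fields $Y(a,z)$ and $Y(b,w)$, ``local of order $N$'' is literally the assertion $[Y(a,z)\xx_N Y(b,w)]=0$.

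For the implication ``$a(n)b=0$ for all $n\ge N$ $\Longrightarrow$ locality of order $N$'', I would substitute $n=N$ into~\eqref{jacobi}:
$$[Y(a,z)\xx_N Y(b,w)]=\Res_u u^N w\inv\,Y(Y(a,u)b,w)\,\de\rbr{\frac{z-u}w}.$$
Since $Y(a,u)b=\sum_m (a(m)b)\,u^{-m-1}$ and the terms with $m\ge N$ vanish by hypothesis, $Y(Y(a,u)b,w)=\sum_{m\le N-1}Y(a(m)b,w)u^{-m-1}$ involves only powers $u^r$ with $r\ge -N$, so $u^N Y(Y(a,u)b,w)$ involves only non-negative powers of $u$; because $\de\rbr{\frac{z-u}w}$ likewise involves only non-negative powers of $u$, there is no $u^{-1}$-term and the residue vanishes, giving $[Y(a,z)\xx_N Y(b,w)]=0$. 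One should note in passing that the products of series here are legitimate: $Y(a,u)b\in V\lser u$ because $a(m)b=0$ for $m\gg0$, and $\de\rbr{\frac{z-u}w}$ is precisely the version built to be multiplied by formal Laurent series in $u$.

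For the converse, assume $[Y(a,z)\xx_N Y(b,w)]=0$. For any integer $m\ge N$ the exponent $m-N$ is non-negative, so $(z-w)^{m-N}$ is an honest polynomial; a short computation with the binomial-expansion convention gives $(z-w)^{m-N}(z-w)^N=(z-w)^m$ and $(z-w)^{m-N}(-w+z)^N=(-w+z)^m$, whence $[Y(a,z)\xx_m Y(b,w)]=(z-w)^{m-N}[Y(a,z)\xx_N Y(b,w)]=0$. Applying $\Res_z$ and the $n$-th product formula yields $Y(a(m)b,w)=0$, and injectivity of $Y$ forces $a(m)b=0$ for every $m\ge N$.

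The argument is short once the apparatus of \S\ref{sec:external} is available; the only place demanding care is the bookkeeping with the binomial-expansion convention when $N<0$ (so that $(z-w)^N$ and $(-w+z)^N$ are genuinely distinct infinite series) — namely justifying the factorization $[Y(a,z)\xx_m Y(b,w)]=(z-w)^{m-N}[Y(a,z)\xx_N Y(b,w)]$ and tracking which powers of $u$ actually occur in~\eqref{jacobi}. For $N\ge0$ all of this is immediate: locality of order $N$ reads $(z-w)^N[Y(a,z),Y(b,w)]=0$, and the two displayed identities are trivial.
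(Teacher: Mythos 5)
Your proof is correct and follows essentially the same route as the paper: both directions rest on the $n$-th product formula $\Res_z[Y(a,z)\times_n Y(b,w)]=Y(a(n)b,w)$ together with injectivity of $Y$, and the converse uses the Jacobi identity~\eqref{jacobi} with the observation that under the hypothesis $u^N\,Y(Y(a,u)b,w)$ contains no negative powers of $u$. The only difference is presentational: you spell out the factorization $[Y(a,z)\times_m Y(b,w)]=(z-w)^{m-N}[Y(a,z)\times_N Y(b,w)]$ for $m\ge N$ (which the paper simply asserts), and you flag the binomial-expansion bookkeeping for $N<0$, but the underlying argument is the same.
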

\begin{proof}
Let $\ba(z)=Y(a,z)$ and $\bb(z)=Y(b,z)$.
If $[\ba(z)\xx_N \bb(w)]=0$, then 
$[\ba(z)\xx_n \bb(w)]=0$
for all $n\ge N$.
This implies that $(\ba\eprod n\bb)(z)=0$.
As $Y\colon V\to \cF(V)$ is injective and preserves products, we conclude that $a(n)b=0$ for all $n\ge N$.
Conversely, assume that $a(n)b=0$ for all $n\ge N$.
By the Jacobi identity \eqref{jacobi}
$$[\ba(z)\xx_N \bb(w)]
=\Res_u u^Nw\inv Y(\ba(u)b,w)\de\rbr{\frac{z-u}{w}}
.$$
Under our assumption, all powers of $u$ on the right are non-negative and we conclude that 
$[\ba(z)\xx_N\bb(w)]=0$.
\end{proof}

Using the binomial expansion convention in Equation \eqref{locality1} and extracting the coefficient of $z^{N-m-1}w^{-n-1}$ , one can obtain an equivalent collection of equations 
\begin{equation}\label{locality2}
\sum_{k\ge0}(-1)^k\binom Nk a(m-k)b(n+k)-
(-1)^{\frm(\al,\beta)}\sum_{k\le N}(-1)^{k}\binom N{N-k} b(n+k)a(m-k)=0
\end{equation}
for all $m,n\in\bZ$. 
If $N\ge0$, then the locality conditions can be written in the form
\begin{equation}\label{locality3}
\sum_{k=0}^N(-1)^k\binom Nk[a(m-k),b(n+k)]=0.
\end{equation}

\subsubsection{Free vertex algebras}
Let $I$ be a set equipped with a symmetric function $\cN\colon I\xx I\to \bZ$ called the \idef{locality function} or just \idef{locality}
and a map $\deg=(\deg_L,\wt):I\to L\xx\oh\bZ$.
We consider the category $\Ver_{\cN,\deg}$ of graded vertex algebras $V\in\cC^{\oh\bZ}=\Vect^{L\xx\oh\bZ}$ equipped with a map $s\colon I\to V$ such that

\begin{enumerate}
\item 
The image of $s$ generates $V$ (as a vertex algebra).
\item
For each $i\in I$, the element $s_i$ 
has degree $\deg(i)$.
\item
For all $i,j\in I$, the fields $Y(s_i,z)$ and $Y(s_j,z)$ are local of order $\cN(i,j)$. Equivalently, $s_i(n) s_j=0$ for all $n\ge \cN(i,j)$.
\end{enumerate}

Analogously to \roi, one proves that the category $\Ver_{\cN,\deg}$ has an initial object, called the \idef{free vertex algebra \wrt locality \cN and degree $\deg$}. We shall now recall the construction of the free vertex algebra in the case of a non-negative locality function.

\subsubsection{Free vertex algebras of non-negative locality}\label{sec:free vert constr}

Suppose that the locality function $\cN$ is non-negative.  We set 
$$X=\set{i(n)\col i\in I,\, n\in\bZ}.$$
The vector space $\bQ X$ spanned by $X$ can be viewed as an object of $\cC^{\oh\bZ}$, where we set
\begin{equation}\label{degrees}
\deg_L i(n)=\deg_L(i)\in L, \qquad \wt i(n)=\wt(i)-n-1\in\oh\bZ.
\end{equation} 
In the free associative algebra $\sT(\bQ X)$, we may consider the locality relations \eqref{locality3} with $a,b\in I$ and $N=\cN(a,b)$. 
These relations are manifestly homogeneous, so they generate a homogeneous ideal $\cI\sbs \sT(\bQ X)$.
The quotient $\cB =\sT(\bQ X)/\cI$ by that ideal is an associative algebra in $\cC^{\oh\bZ}$. Let us consider the subset
$$X_+=\sets{i(n)\in X}{n\ge0},$$
and the left $\cB$-module $\P=\cB /\cB X_+$; we denote by $\one$ the cyclic element of that module.

\begin{lemma}\label{lm:restricted}
The module $\P$ is restricted, meaning that, for all $i\in I$ and $x\in \P$, we have $i(n)x=0$ for $n\gg0$.
\end{lemma}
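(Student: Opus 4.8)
The plan is to show that the $\cB$-module $\P = \cB/\cB X_+$ is restricted, i.e. that for each $i\in I$ and each $x\in\P$ the element $i(n)x$ vanishes for $n$ large. First I would reduce to the case where $x$ is the class of a monomial in the generators, since $\P$ is spanned by such classes; in fact, because $\one$ generates $\P$ over $\cB$, every element of $\P$ is a $\bQ$-linear combination of elements of the form $i_1(n_1)\cdots i_r(n_r)\one$, and by induction on the length $r$ it suffices to prove: if $y = j(m)\cdots\one$ is restricted (meaning $i(n)y = 0$ for $n\gg 0$ for every $i$), then so is $j'(m')y$ for any generator $j'(m')$. The base case $r=0$ is the statement that $i(n)\one = 0$ for $n\gg 0$, which is immediate since $i(n)\in X_+$ for $n\ge 0$ and $\cB X_+\one = 0$ by definition of $\P$.

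For the inductive step the key tool is the locality relation \eqref{locality3}: for generators $a,b\in I$ with $N = \cN(a,b)\ge 0$ we have in $\cB$
$$\sum_{k=0}^N(-1)^k\binom Nk\, a(m-k)b(n+k) = \sum_{k=0}^N(-1)^k\binom Nk\, b(n+k)a(m-k)$$
for all $m,n\in\bZ$. I would use this to commute a generator $i(n)$ past the leftmost generator $j(m)$ appearing in $x = j(m)x'$: writing $i(n)j(m)$ in terms of $j(m')i(n')$ plus lower-length corrections via the locality relation, one expresses $i(n)x$ as a sum of terms $j(m')\,i(n')x'$ with $n'\ge n - N$ (so $n'$ is still large when $n$ is), plus terms of the form (generator)$\cdot$(shorter monomial acted on by $i$ at index $\ge n - N$). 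By the induction hypothesis applied to $x'$, each $i(n')x'$ with $n'\gg 0$ vanishes; and the correction terms involve $i$ acting at indices bounded below by $n - N$ on monomials of strictly smaller length, which vanish by the inductive hypothesis as well once $n$ is large enough. Since there are only finitely many such correction terms and each imposes a finite lower bound on $n$, taking $n$ past the maximum of these bounds kills $i(n)x$.

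The main obstacle, and the point requiring care, is bookkeeping the recursion cleanly: a single application of the locality relation to move $i(n)$ past $j(m)$ produces a \emph{bounded} set of new terms, but one must track that the index at which $i$ acts only decreases by a bounded amount ($\le N = \cN(i,j)$) at each step, and that the monomials to which $i$ is applied strictly decrease in length. The cleanest formulation is probably a double induction — outer on the length $r$ of the monomial $x$, inner on ``how far $i(n)$ has been pushed to the right'' — or equivalently to prove directly that $i(n)$ acts by zero on the span of all monomials of length $\le r$ once $n$ exceeds a bound $B(r,i)$ depending only on $r$, $i$, and the finitely many relevant values of $\cN$. One should also note that the generators $j(m)$ with $m\ge 0$ lie in $X_+$, so whenever a correction term ends in such a generator acting on $\one$ it is automatically zero, which is what grounds the whole recursion. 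No subtlety beyond this arises: the locality relations are the only relations in $\cB$ that are needed, and they are exactly the tool designed to control the $n\to\infty$ behavior.
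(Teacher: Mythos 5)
Your overall plan---reduce to monomials, induct on length, and use the locality relation \eqref{locality3} to push $i(n)$ past the leading generator---is the right one and matches the paper's strategy. However, there is a genuine gap in how you describe the terms that the locality relation actually produces.

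Solving the relation with $N=\cN(i,j)\ge 0$ for $i(n)j(m)$ gives
\[
i(n)j(m) \;=\; \sum_{k=0}^N(-1)^k\binom Nk\, j(m+k)\,i(n-k) \;-\; \sum_{k=1}^N(-1)^k\binom Nk\, i(n-k)\,j(m+k).
\]
The first sum is the ``pushed-through'' part you describe: $j(m+k)$ applied to $i(n-k)x'$, where $x'$ is shorter and the outer induction kills it for $n\gg0$. But the second sum is \emph{not} a ``lower-length correction'': each term $i(n-k)j(m+k)x'$ has $i$ applied to the monomial $j(m+k)x'$, which has the \emph{same} length $r$, just with the leading index raised from $m$ to $m+k$. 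So your claim that ``the monomials to which $i$ is applied strictly decrease in length'' is false for exactly these terms, and the induction hypothesis on shorter monomials does not dispose of them.

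What actually terminates the recursion is that the leading index only goes \emph{up}, combined with the outer hypothesis applied to $x'$: since $x'$ has length $r-1$, one has $j(m')x'\in\cB X_+$ for $m'\gg0$, so only finitely many values of the leading index are relevant, and the correct inner (secondary) induction is a downward one on that index. The paper packages the same information as a contradiction-with-extremality argument: choose $p$ minimal and then the first index $n_1$ maximal among bad monomials, so that every second-sum term $i(n-k)i_1(n_1+k)i_2(n_2)\cdots$ with $k\ge1$ is already in $\cB X_+$ for $n\gg0$. Your proposed ``double induction, outer on length, inner on how far $i(n)$ has been pushed to the right'' gestures at this, but the inner variable you name (the position of $i$, which in fact stays leftmost) is not the one that works; it should be the value of the leading index. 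Once that is fixed, your argument coincides with the paper's, but as written the stated reason the correction terms vanish is incorrect and the inner induction is not well-founded as described.
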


\begin{proof}
Let $w=i_1(n_1)\dots i_p(n_p)$ be a monomial in $\mathsf{T}(\bQ X)$, and suppose that there exists $i\in I$ such that $i(n)w\notin \cB X_+$ for arbitrary large $n$. We can assume that $p$ is minimal possible.
Then 
$$i_1(k)i_2(n_2)\dots i_p(n_p)\in \cB X_+$$
for $k\gg0$.
Therefore we can assume that for any $k>n_1$, we have
$$i(n)i_1(k)i_2(n_2)\dots i_p(n_p)\in \cB X_+$$
for $n\gg0$. Let $N=\cN(i,i_1)$. Then we have a locality relation 
\begin{equation*}
\sum_{k=0}^N(-1)^k\binom Nk[i(n-k),i_1(n_1+k)]=0
\end{equation*}
in the algebra $\cB$. This relation implies that $i(n)w$ is contained in the left ideal generated by elements
$$i(n')i_1(k)i_2(n_2)\dots i_p(n_p),\qquad 
n'\ge n-N,\,k>n_1,$$
$$i(n')i_2(n_2)\dots i_p(n_p),\qquad n'\ge n-N.$$
By our assumption, these elements are contained in $\cB X_+$ for $n\gg0$.
\end{proof}

This lemma implies that for every $i\in I$, we have a field $\bi(z)=\sum_{n\in\bZ}i(n)z^{-n-1}\in \cF(\P)$. These fields are mutually local by construction.
Let us define a derivation $T$ of $\mathsf{T}(\bQ X)$ by the formula $T i(n)=-ni(n-1)$. 

\begin{lemma}\label{lm:der T}
The derivation $T$ preserves the ideal $\cI$. In particular, $T$ induces endomorphisms of $\cB$ and $\P$.
\end{lemma}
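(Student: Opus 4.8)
The plan is to show that the derivation $T$ defined on $\sT(\bQ X)$ by $Ti(n)=-ni(n-1)$ (and extended by the Leibniz rule) sends each generator of the ideal $\cI$ back into $\cI$; since $T$ is a derivation and the generators are homogeneous, this suffices to conclude $T\cI\sbs\cI$. First I would recall that $\cI$ is generated by the locality relations
$$r_{a,b}(m,n)=\sum_{k=0}^{N}(-1)^k\binom Nk\bigl[a(m-k),b(n+k)\bigr],\qquad N=\cN(a,b),$$
for $a,b\in I$ and $m,n\in\bZ$, where $[x,y]=xy-(-1)^{(\al,\beta)}yx$ is the braided commutator (with $a\in\cF(V)_\al$, $b\in\cF(V)_\beta$). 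The key observation is that $T$ is compatible with this commutator in the graded sense: $T[x,y]=[Tx,y]+[x,Ty]$, because $T$ is a degree-preserving derivation of $\sT(\bQ X)$ for the braided multiplication (the braiding signs match since $T$ preserves $L$-degree). Hence it is enough to compute $T r_{a,b}(m,n)$ term by term.

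Next I would carry out that computation. Using $Ta(m-k)=-(m-k)a(m-k-1)$ and $Tb(n+k)=-(n+k)b(n+k-1)$, we get
$$T r_{a,b}(m,n)=-\sum_{k=0}^{N}(-1)^k\binom Nk\Bigl((m-k)\bigl[a(m-k-1),b(n+k)\bigr]+(n+k)\bigl[a(m-k),b(n+k-1)\bigr]\Bigr).$$
The task is to recognize the right-hand side as a $\bQ$-linear combination of the relations $r_{a,b}(m-1,n)$, $r_{a,b}(m,n-1)$ (and possibly $r_{a,b}(m',n')$ with $m'+n'=m+n-1$). This is a finite binomial identity: after reindexing the two sums so that the same bracket $[a(p),b(q)]$ with $p+q=m+n-1$ appears, one collects the scalar coefficient of each such bracket and checks, via the Pascal-type identity $(N-k)\binom Nk=N\binom{N-1}{k}$ and $k\binom Nk=N\binom{N-1}{k-1}$, that it matches the coefficient obtained by expanding $-(m-1)\,r_{a,b}(m-1,n)$ or $-n\,r_{a,b}(m,n-1)$ (or a suitable combination). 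Essentially this is the statement that the operator $T$ acts on the space of "order-$N$ locality relations for the pair $(a,b)$" as the shift $m\mto m-1$ weighted by $m$, which is exactly the behavior forced by axiom (3) of a conformal algebra; so the identity is bound to close up. Once $Tr_{a,b}(m,n)\in\cI$ is verified, $T$ descends to $\cB=\sT(\bQ X)/\cI$, and since $TX_+\sbs\bQ X_+\cup\{0\}$ (indeed $Ti(n)=-ni(n-1)$ lies in $\bQ X_+$ for $n\ge 1$ and is $0$ for $n=0$), the left ideal $\cB X_+$ is preserved, so $T$ also descends to $\P=\cB/\cB X_+$.

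The main obstacle I expect is purely bookkeeping: getting the index shifts and binomial coefficients to line up cleanly, in particular handling the boundary terms $k=0$ and $k=N$ where a shifted index falls outside the range $0,\dots,N-1$. One clean way to avoid case analysis is to work at the level of generating series: the relation $r_{a,b}$ is the coefficient extraction from $(z-w)^N[a(z),b(w)]=0$, and $T$ corresponds to $\dd_z+\dd_w$ acting on such series; then $T\bigl((z-w)^N[a(z),b(w)]\bigr)=N(z-w)^{N-1}[a(z),b(w)]+(z-w)^N(\dd_z+\dd_w)[a(z),b(w)]$, and both summands are manifestly in the span of the ideal generators (the first is an order-$(N-1)$, hence \emph{a fortiori} order-$N$, locality expression — wait, rather it follows since $(z-w)^{N-1}[a(z),b(w)]$ multiplied by $(z-w)$ vanishes in $\cB$, so $(z-w)^{N-1}[a(z),b(w)]$ is annihilated after one more multiplication, and its coefficients lie in $\cI$ because $\cI$ contains $(z-w)^N[a(z),b(w)]$ and is stable under multiplication by $z,w$; the second summand is $(\dd_z+\dd_w)$ applied to something whose coefficients are in $\cI$, and $\cI$ is stable under the coefficient-wise action of $\dd_z+\dd_w$ up to index shifts). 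Phrasing the argument through generating functions sidesteps the explicit binomial manipulation entirely and makes the boundary terms disappear automatically; that is the route I would write up.
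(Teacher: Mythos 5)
Your generating-series route is exactly the paper's route, but you make a sign/cancellation error that leaves you stuck. The correct computation is
\[
\dd_z\Bigl((z-w)^N[\ba(z),\bb(w)]\Bigr)=\;N(z-w)^{N-1}[\ba(z),\bb(w)]+(z-w)^N[\dd_z\ba(z),\bb(w)],
\]
\[
\dd_w\Bigl((z-w)^N[\ba(z),\bb(w)]\Bigr)=-N(z-w)^{N-1}[\ba(z),\bb(w)]+(z-w)^N[\ba(z),\dd_w\bb(w)],
\]
so that the two $N(z-w)^{N-1}$ terms \emph{cancel} upon adding, and one gets cleanly
\[
(\dd_z+\dd_w)\Bigl((z-w)^N[\ba(z),\bb(w)]\Bigr)=(z-w)^N[T\ba(z),\bb(w)]+(z-w)^N[\ba(z),T\bb(w)]=T\Bigl((z-w)^N[\ba(z),\bb(w)]\Bigr).
\]
Since the coefficients of $\dd_zF+\dd_wF$ are $\bQ$-linear combinations of coefficients of $F$, this immediately gives $T\cI\sbs\cI$. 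Your formula instead keeps a spurious $N(z-w)^{N-1}[\ba(z),\bb(w)]$ term, and you are then forced to argue that the coefficients of $(z-w)^{N-1}[\ba(z),\bb(w)]$ lie in $\cI$. That is false in general, and your attempted justification is backwards: from the fact that $(z-w)\cdot G(z,w)$ has coefficients in $\cI$ one cannot conclude that $G$ itself has coefficients in $\cI$ (the coefficient of $z^pw^q$ in $(z-w)G$ is $G_{p-1,q}-G_{p,q-1}$, and knowing these differences lie in $\cI$ does not determine the individual $G_{p,q}$ modulo $\cI$). The fix is simply to observe that $(\dd_z+\dd_w)(z-w)^N=0$, so the problematic term never arises. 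Once that is repaired, your argument coincides with the paper's. Your remarks about $TX_+\sbs\bQ X_+\cup\{0\}$ and the descent to $\cB$ and $\P$ are correct and a useful explicit supplement to the paper's terser ``in particular'' clause.
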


\begin{proof}
Let us consider $\bi(z)=\sum_{n\in\bZ}i(n)z^{-n-1}$ as an element of $\mathsf{T}(\bQ X)\pser{z^{\pm1}}$. Using this notation, we see that the relations that generate $\cI$ are coefficients of $(z-w)^N[\bi(z),\bj(w)]$ with $N=\cN(i,j)$. 
Since $T\bi(z)=\dd_z\bi(z)$, we have
$$\dd_z\Big((z-w)^N[\bi(z),\bj(w)]\Big)
=N(z-w)^{N-1}[\bi(z),\bj(w)]+(z-w)^N[T\bi(z),\bj(w)]$$
and
$$\dd_w\Big((z-w)^N[\bi(z),\bj(w)]\Big)
=-N(z-w)^{N-1}[\bi(z),\bj(w)]+(z-w)^N[\bi(z),T\bj(w)].$$
Adding these equations, we obtain
$$T(z-w)^N[\bi(z),\bj(w)]=
\dd_z\Big((z-w)^N[\bi(z),\bj(w)]\Big)+\dd_w\Big((z-w)^N[\bi(z),\bj(w)]\Big),
$$ which means that locality relations are preserved by $T$. 
\end{proof}

Applying the existence theorem \kac[Theorem 4.5] to the fields $\bi(z)\in \cF(\P)$ and the endomorphism $T\in\End(\P)$, we obtain a vertex algebra structure on $\P$ with 
$$Y(i_1(n_1)\dots i_p(n_p)\one,z)
=\bi_1(z)\eprod{n_1}(\bi_2(z)\eprod{n_2}(\dots (\bi_p(z)\eprod{n_p}\id_{\P})\dots)).$$
The vertex algebra $\P$ is the free vertex algebra \wrt locality \cN and degree function $\deg$.
We will sometimes denote the element $i(-1)\one\in\P$ by $i$.
\medskip

\subsubsection{Free vertex algebras as universal enveloping algebras}
Next we will show that the free vertex algebra $\P$ of non-negative locality
can be identified with the universal enveloping algebra of a \LieCon algebra.
Let $\fL$ be the Lie algebra with generators $i(n)\in X$ subject to the locality relations \eqref{locality3}
with $a,b\in I$ and $N=\cN(a,b)$,
and let $\fL_+\sbs\fL$ be the subalgebra generated by $X_+\sbs X$.
Then we have
$$\cB\iso U(\fL),\qquad \P=\cB/\cB X_+\iso U(\fL)\ts_{U(\fL_+)}\bQ,$$
For every $i\in I$, we consider $\bi(z)=\sum_n i(n)z^{-n-1}\in \fL\pser{z^{\pm1}}$.
These elements are mutually local by definition, therefore by Dong's Lemma \cite[Prop.~ 3.2.7]{li_local}
they generate, under non-negative external products and the derivation $\dd_z$, a conformal algebra 
$$\fC\sbs \fL\pser{z^{\pm1}}$$
which is a \LieCon algebra as conditions from
\S\ref{lie conformal} are automatically satisfied.

\begin{proposition}\label{free universal enveloping}
The free vertex algebra $\P$ of non-negative locality is isomorphic to the universal enveloping vertex algebra of the \LieCon algebra $C$.
\end{proposition}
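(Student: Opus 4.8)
The plan is to identify the coefficient Lie algebra $\Coeff(\fC)$ of \S\ref{sec:coeff algebra} with the Lie algebra $\fL$, in a way compatible with the decompositions $\Coeff(\fC)=\Coeff(\fC)_+\oplus\Coeff(\fC)_-$ and $\fL=\fL_+\oplus\fL_-$, and then to transport the defining description $\cU(\fC)\iso U(\Coeff(\fC))\ts_{U(\Coeff(\fC)_+)}\bQ$ from \S\ref{lie conformal} across this identification. Since by construction $\P=U(\fL)\ts_{U(\fL_+)}\bQ$, this yields an isomorphism of graded vector spaces $\cU(\fC)\iso\P$, which I then upgrade to an isomorphism of vertex algebras by checking that the generating fields correspond.

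First I would build the comparison map. The inclusion $\fC\hookrightarrow\fL\pser{z^{\pm1}}$ is, by the very definition of $\fC$, a morphism of conformal algebras (with $\fL$ regarded as an algebra via its bracket), so the universal property of the coefficient algebra provides a Lie algebra homomorphism $\varrho\colon\Coeff(\fC)\to\fL$ with $\varrho(\bi(n))=i(n)$ for all $i\in I$ and $n\in\bZ$; it preserves degrees and is surjective since the $i(n)$ generate $\fL$. For injectivity I would exhibit an inverse, by showing that the assignment $i(n)\mapsto\bi(n)\in\Coeff(\fC)$ respects the defining locality relations \eqref{locality3} of $\fL$. Inside $\fL\pser{z^{\pm1}}$ one has $\bi\eprod k\bj=\Res_w(w-z)^k[\bi(w),\bj(z)]=0$ for $k\ge N:=\cN(i,j)$, because $(w-z)^N[\bi(w),\bj(z)]=0$ is exactly the locality relation holding in $\fL$; hence by \eqref{bracket from prod2} one has $[\bi(m-k),\bj(n+k)]=\sum_{l=0}^{N-1}\binom{m-k}{l}(\bi\eprod l\bj)(m+n-l)$ in $\Coeff(\fC)$, so $\sum_{k=0}^N(-1)^k\binom Nk[\bi(m-k),\bj(n+k)]=\sum_{l=0}^{N-1}(\bi\eprod l\bj)(m+n-l)\bigl(\textstyle\sum_{k=0}^N(-1)^k\binom Nk\binom{m-k}{l}\bigr)$, and the inner sum is, up to sign, the $N$-th finite difference at $0$ of the polynomial $k\mapsto\binom{m-k}{l}$, of degree $l<N$, hence vanishes. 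This gives a Lie homomorphism $\fL\to\Coeff(\fC)$ which is inverse to $\varrho$ on generators; since $\Coeff(\fC)$ is generated as a Lie algebra by the $\bi(n)$ — because $\fC$ is generated as a conformal algebra by the $\bi$, and the coefficient $(\ba\eprod l\bb)(n)=\sum_{j=0}^l(-1)^j\binom lj[\ba(l-j),\bb(n+j)]$ of a conformal product is a combination of brackets of coefficients — it follows that $\varrho$ is an isomorphism.

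Next I would check that $\varrho(\Coeff(\fC)_+)=\fL_+$. Since $\fL_+$ is the Lie subalgebra of $\fL$ generated by $\{i(n):n\ge0\}$, it is enough to show that $a(n)\in\Coeff(\fC)$ lies in the Lie subalgebra generated by $\{\bi(m):m\ge0\}$ whenever $a\in\fC$ and $n\ge0$; I would prove this by induction on the number of generators $\bi$ occurring in an expression for $a$, using that $(\dd c)(n)=-nc(n-1)$ yields $(\dd^r\bi)(n)=(-1)^r n(n-1)\cdots(n-r+1)\,\bi(n-r)$, which vanishes for $0\le n<r$ and is a multiple of $\bi(n-r)$ with $n-r\ge0$ for $n\ge r$, and that in the displayed formula for $(\ba\eprod l\bb)(n)$ all the indices $l-j$ and $n+j$ are non-negative when $0\le j\le l$ and $n\ge0$. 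Granting this, $\varrho$ restricts to an isomorphism $\Coeff(\fC)_+\iso\fL_+$, so functoriality of the universal enveloping algebra together with base change give an isomorphism of graded vector spaces $\cU(\fC)\iso U(\Coeff(\fC))\ts_{U(\Coeff(\fC)_+)}\bQ\iso U(\fL)\ts_{U(\fL_+)}\bQ=\P$ intertwining the $\Coeff(\fC)$- and $\fL$-module structures. Under this identification the generating field $Y(\psi(\bi),z)=\sum_n\bi(n)z^{-n-1}$ of $\cU(\fC)$ is carried to the generating field $\bi(z)=\sum_n i(n)z^{-n-1}$ of $\P$, and since in both vertex algebras $Y$ on an arbitrary element is reconstructed from the generating fields by the same formula $Y(i_1(n_1)\cdots i_p(n_p)\one,z)=\bi_1(z)\eprod{n_1}(\cdots(\bi_p(z)\eprod{n_p}\id_{\P})\cdots)$, the linear isomorphism $\cU(\fC)\iso\P$ preserves $Y$ and the vacuum and is thus an isomorphism of vertex algebras.

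I expect the main obstacle to be precisely the injectivity of $\varrho$ together with the identification of the positive halves: both come down to the finite-difference binomial identity and an induction on word length in the generators $\bi$, which needs to be set up with some care. An alternative route that avoids this bookkeeping is to show directly that $\cU(\fC)$, with structure map $i\mapsto\bi(-1)\one$, is an initial object of $\Ver_{\cN,\deg}$: it belongs to that category (it is generated as a vertex algebra by the elements $\bi(-1)\one$ because $\fC$ is conformally generated by the $\bi$, and its generating fields are mutually local of order $\cN(i,j)$ since $\bi\eprod n\bj=0$ for $n\ge\cN(i,j)$ in $\fC$), and for any $V'\in\Ver_{\cN,\deg}$ the adjunction between $\cU$ and the forgetful functor to \LieCon algebras (\cite[Theorem 5.5]{MR1670692}) identifies vertex-algebra morphisms $\cU(\fC)\to V'$ compatible with structure maps with \LieCon morphisms $\fC\to V'$ sending $\bi$ to $s'_i$, and there is a unique such morphism because $\fC$ is the conformal algebra freely generated by the $\bi$ subject to the locality orders $\cN(i,j)$; initiality of $\P$ in $\Ver_{\cN,\deg}$ then forces $\P\iso\cU(\fC)$.
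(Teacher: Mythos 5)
Your proof takes essentially the same route as the paper: identify $\Coeff(\fC)$ with $\fL$ via the map $\varrho$, construct its inverse by verifying that the images $\bi(n)$ satisfy the locality relations, match the positive halves, and transport the Verma-module description across the identification. The key vanishing $\sum_{k=0}^N(-1)^k\binom Nk\binom{m-k}{l}=0$ for $0\le l<N$ is the same lemma in both; the paper proves it by a generating-function computation, while you invoke the finite-difference interpretation (the $N$-th forward difference of a polynomial of degree $l<N$ vanishes) --- equivalent arguments, and yours is arguably more conceptual. The one place where you do more work than the paper is the identification $\varrho(\Coeff(\fC)_+)=\fL_+$: the paper cites Roitman (\cite[\S3.2]{MR1700512}), while you sketch a self-contained structural induction using the formulas for $(\dd^r\bi)(n)$ and $(\ba\eprod l\bb)(n)$; this is a correct and useful unpacking. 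A caution about your ``alternative route'' via initiality: its pivotal claim that $\fC$ is the free \LieCon algebra with prescribed locality orders is not automatic from the definition of $\fC$ as a subalgebra of $\fL\pser{z^{\pm1}}$ --- a priori $\fC$ could satisfy extra relations --- and in fact this freeness is essentially a restatement of $\Coeff(\fC)\iso\fL$, so that route does not actually avoid the bookkeeping; it presupposes it.
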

\begin{proof}
First, let us show that the algebra morphism 
$$\varrho:\Coeff(\fC)\to \fL,\qquad
\ba(z)(n)\mto\Res_z z^n\ba(z),$$
is an isomorphism (\cf \cite[Prop.~3.1]{MR1700512}).
(In fact, this \LieCon algebra is the free \LieCon algebra \wrt locality $\cN$ and degree function $\deg$.)
To construct the inverse map we need to show that elements $\bi(z)(n)$ satisfy the locality relations.
This means that for $N=\cN(i,j)$, we have
$$\sum_{k=0}^N(-1)^k\binom Nk
[\bi(z)(m-k),\bj(z)(n+k)]=0$$
in $\Coeff(C)$.
Using the formula for the bracket in $\Coeff(C)$, this equation can be written in the form
$$\sum_{k=0}^N(-1)^k\binom Nk
\sum_{l\ge0}\binom{m-k}{l}(\bi\eprod l\bj)(m+n-l)=0.$$
Note that $\bi\eprod l\bj=0$ for $l\ge N$ by the locality relations.
We claim that, on the other hand, for any 
$0\le l<N$, we have
$$\sum_{k=0}^N(-1)^k\binom Nk
\binom{m-k}{l}=0.
$$
This claim would imply the required statement.
We have
\begin{multline*}
\sum_{k,l\ge0}\binom Nk
\binom{m-k}{l}x^ky^l
=\sum_{k\ge0}\binom Nk x^k(1+y)^{m-k}
=(1+y)^m\rbr{1+\frac x{1+y}}^N\\
=(1+x+y)^N(1+y)^{m-N}.
\end{multline*}
In particular, for $x=-1$, we obtain
$$\sum_{k,l\ge0}(-1)^k\binom Nk
\binom{m-k}{l}y^l
=y^N(1+y)^{m-N}
$$
which has only powers of $y$ that are $\ge N$.
This proves the claim.

The derivation $\dd$ on $\Coeff(C)$ corresponds to the derivation $T$ on $\fL\sbs\cB$ from Lemma~\ref{lm:der T}.
Using the isomorphism $\varrho$, we obtain the direct sum decomposition 
$$\fL=\fL_+\oplus\fL_-,$$
where both subalgebras are preserved by $T$.
By \cite[\S3.2]{MR1700512}, the algebra $\fL_+\sbs\fL$ is
exactly the algebra generated by $X_+=\sets{i(n)\in X}{n\ge0}$
which we introduced earlier.

This implies that $\P=\cB/\cB X_+$ is isomorphic to the Verma module $U(\fL)\ts_{U(\fL_+)}\bQ$ as a $\cB$-module, and therefore the free vertex algebra $\P$ is isomorphic 
to the universal enveloping vertex algebra $\cU(\fC)$.
\end{proof}

\subsection{Lattice vertex algebras}
\label{lattice VA}
In this section, we recall an important class of vertex algebras called lattice vertex algebras. They play a prominent role in our work, for they can be used to construct convenient realisations of certain free vertex algebras. 

\subsubsection{Fock spaces}
We consider the $\bQ$-vector space $\fh=L\ts_\bZ\bQ$ associated to $L$, and define the Heisenberg Lie algebra $\hat\fh$ as a one-dimensional extension
$$0\to\bQ K\to\hat\fh\to\fh\ts\bQ[t^{\pm1}]\to0$$
with central $K$ and
$$[h_m,h'_n]=m\de_{m,-n}\cdot \frm(h,h')K.$$
Here and below, we define $h_n:=h\ts t^n$, for $h\in\fh$ and $n\in\bZ$. 
If we set weight degrees
$$\wt(h_n)=-n,\qquad\wt(K)=0,$$
then $\hat\fh$ becomes a Lie algebra in $\Vect^{\oh\bZ}$.

Let $\sH$ be the associative Heisenberg algebra which is the quotient of the universal enveloping algebra of $\hat\fh$ modulo the ideal generated by $K-1$.
Consider the commutative subalgebra $$\sH_+=U(\fh\ts\bQ[t])\iso S(\fh\ts\bQ[t])\sbs\sH.$$
For any $\la\in L\dual=\Hom(L,\bZ)$, let $\bQ_\la$ be the $1$-dimensional module over $\sH_+$ where, for each $h\in\fh$, $h_0$ acts by multiplication
with $\la(h)$ and $h_n$ with $n>0$ acts trivially. We define the \idef{Fock space} $V_\la$ to be the $\sH$-module
$$V_\la=\sH\ts_{\sH_+}\bQ_\la .$$
As a vector space, it is isomorphic to $S(\fh\ts t\inv\bC[t\inv])$. 
We denote the cyclic vector of $V_\la$ by $\ket\la$.
In particular, for any $\al\in L$, we may consider the linear function $\frm(\al,\cdot)\in L\dual$
and by abuse of notation, we shall denote the corresponding 
Fock space by $V_\al$ and its generator by $\ket\al$.

For any $\al\in L$, we have a field $\al(z)=\sum_{n\in\bZ}\al_nz^{-n-1}\in \cF(V_0)$.
One knows that $V_0$ has a vertex algebra structure with the vacuum $\ket 0$
and with 
$$Y(\al^{(1)}_{n_1}\dots \al^{(p)}_{n_p}\ket 0,z)
=\al^{(1)}(z)\eprod{n_1}(\al^{(2)}(z)\eprod{n_2}(\dots \al^{(p)}(z)\eprod{n_p}\id_{V_0})).$$
In particular, $Y(\al_{-1}\ket0,z)=\al(z)$.
Similarly, using $\id_{V_\la}$ in the above formula, we can equip $V_\la$ with a module structure over the vertex algebra $V_0$.

\subsubsection{Vertex operators and lattice vertex algebras}\label{lattice voa}

Let us consider the \idef{total Fock space}
$$V_L=\bop_{\al\in L}V_\al .$$
This space is automatically $L$-graded. We equip it with a weight grading for which $\wt(\ket \beta)=\oh\frm(\beta,\beta)$,
so that
\begin{equation}
\wt(\al^{(1)}_{n_1}\dots \al^{(p)}_{n_p}\ket \beta)
=\oh\frm(\beta,\beta)-\sum_{k=1}^p n_k\in\oh\bZ.
\end{equation}
Thus, $V_L$ may be considered as an object in $\cC^{\oh\bZ}$.

Let $S_\al\colon V_L\to V_L$ be the unique linear map
satisfying
\begin{enumerate}
\item $S_\al \ket\beta=\ket{\al+\beta}$.
\item $[h_n,S_\al]=0$ for $n\ne0$ and $h\in\fh$.
\end{enumerate}
Let
$z^{\frm(\al,-)}\colon V_L\to V_L\pser{z^{\pm1}}$
be the map that acts on $V_\beta$ by multiplication with $z^{\frm(\al,\beta)}$.
Note that
$$z^{\frm(\al,-)} S_\beta=z^{\frm(\al,\beta)}S_\beta z^{\frm(\al,-)}.$$

For any $\al\in L$, we let 
$$
\Ga^-_\al(z)=
\exp\rbr{-\sum_{n<0}\frac{\al_n}nz^{-n}}
,\qquad
\Ga^+_\al(z)=\exp\rbr{-\sum_{n>0}\frac{\al_n}nz^{-n}}
.$$
Using these series, we now define the \idef{vertex operator} (a field on $V_L$)
\begin{equation}
\Ga_\al(z)
=\sum_n \Ga_\al(n)z^{-n-1}
=S_\al \Ga^-_\al(z)\Ga^+_\al(z)z^{\frm(\al,-)}.
\end{equation}

Let us check that the operators $\Ga_\al(z)$ are pairwise local.

\begin{lemma}\label{VO locality}
For any $\al,\beta\in L$, we have
$$(z-w)^{-\frm(\al,\beta)}\Ga_\al(z)\Ga_\beta(w)
=(w-z)^{-\frm(\al,\beta)}\Ga_\beta(w)\Ga_\al(z).$$
Equivalently, $\Ga_\al(z)$, $\Ga_\beta(z)$ are local of order  $-\frm(\al,\beta)$.
\end{lemma}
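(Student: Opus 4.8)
The plan is to compute the two sides of the claimed identity directly from the definition of $\Ga_\al(z)$ as the normally ordered-type product $S_\al\,\Ga^-_\al(z)\Ga^+_\al(z)z^{\frm(\al,-)}$, and to reduce the commutation of the two vertex operators to a purely ``exponential'' identity in the Heisenberg algebra plus a matching of the powers of $z$, $w$, and $z-w$. First I would recall the standard Baker--Campbell--Hausdorff-type computation: since $[\al_m,\beta_n]=m\de_{m,-n}\frm(\al,\beta)$ (with $K=1$), the only nontrivial contractions are between the $\Ga^+_\al(z)$ factor and the $\Ga^-_\beta(w)$ factor, and one gets
\begin{equation}
\Ga^+_\al(z)\Ga^-_\beta(w)=\Ga^-_\beta(w)\Ga^+_\al(z)\cdot\Bigl(1-\tfrac wz\Bigr)^{\frm(\al,\beta)},
\end{equation}
where $(1-w/z)^{\frm(\al,\beta)}$ is understood via the binomial expansion convention in the region $|z|>|w|$ (i.e. as an element of $\bQ\pser{w/z}$); the analogous reordering of $\Ga^+_\beta(w)\Ga^-_\al(z)$ produces $(1-z/w)^{\frm(\al,\beta)}$. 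I would also record the elementary relations $z^{\frm(\al,-)}S_\beta=z^{\frm(\al,\beta)}S_\beta z^{\frm(\al,-)}$, which contribute an extra factor $z^{\frm(\al,\beta)}$ (resp. $w^{\frm(\al,\beta)}$) when the shift operator $S_\beta$ is pushed past $z^{\frm(\al,-)}$, and the fact that $S_\al$ and $S_\beta$ commute while $\Ga^\pm_\al$ commutes with $\Ga^\pm_\beta$ (same sign of modes).

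The main step is then bookkeeping: expanding $\Ga_\al(z)\Ga_\beta(w)$ and moving all $S$'s to the left and all $z^{\frm(\cdot,-)}$'s to the right, one obtains
\begin{equation}
\Ga_\al(z)\Ga_\beta(w)=S_{\al+\beta}\,\Ga^-_\al(z)\Ga^-_\beta(w)\Ga^+_\al(z)\Ga^+_\beta(w)\,z^{\frm(\al,\beta)}\,(z-w)^{\frm(\al,\beta)}\,z^{\frm(\al,-)}w^{\frm(\beta,-)},
\end{equation}
after rewriting $z^{\frm(\al,\beta)}(1-w/z)^{\frm(\al,\beta)}=(z-w)^{\frm(\al,\beta)}$ under the binomial convention (expansion in the second summand). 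Doing the same with the roles of $(\al,z)$ and $(\beta,w)$ swapped gives
\begin{equation}
\Ga_\beta(w)\Ga_\al(z)=S_{\al+\beta}\,\Ga^-_\al(z)\Ga^-_\beta(w)\Ga^+_\al(z)\Ga^+_\beta(w)\,w^{\frm(\al,\beta)}\,(w-z)^{\frm(\al,\beta)}\,z^{\frm(\al,-)}w^{\frm(\beta,-)},
\end{equation}
using $w^{\frm(\al,\beta)}(1-z/w)^{\frm(\al,\beta)}=(w-z)^{\frm(\al,\beta)}$. Multiplying the first identity by $(z-w)^{-\frm(\al,\beta)}$ and the second by $(w-z)^{-\frm(\al,\beta)}$ makes the two right-hand sides literally equal — the operator prefactors $S_{\al+\beta}\,\Ga^-_\al\Ga^-_\beta\Ga^+_\al\Ga^+_\beta\,z^{\frm(\al,-)}w^{\frm(\beta,-)}$ agree term by term, since the $\Ga^-$'s commute with each other, the $\Ga^+$'s commute with each other, and there is no leftover scalar factor once the binomial powers are cancelled. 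This yields exactly the asserted equality, and translating it into the locality statement is immediate: writing $\frm(\al,\beta)=-N$ with $N=-\frm(\al,\beta)$, the identity says $(z-w)^{N}\Ga_\al(z)\Ga_\beta(w)=(-z+w)^{N}\Ga_\beta(w)\Ga_\al(z)$ with binomial expansions, which is precisely Definition~\ref{def:locality order} of mutual locality of order $N$ (the sign $(-1)^{\frm(\al,\beta)}$ there is absorbed into $(-z+w)^N=(-1)^{\frm(\al,\beta)}(z-w)^N$ once one remembers $\frm(\al,\beta)$ may be negative, where the two sides are genuinely different series).

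The part requiring care — and the only real obstacle — is keeping the expansion regions straight: the factor $(1-w/z)^{\frm(\al,\beta)}$ coming from reordering $\Ga^+_\al(z)$ past $\Ga^-_\beta(w)$ must be expanded in nonnegative powers of $w/z$, while $(1-z/w)^{\frm(\al,\beta)}$ from the opposite reordering lives in nonnegative powers of $z/w$; these are not interchangeable when $\frm(\al,\beta)<0$, and it is exactly this discrepancy that the factors $(z-w)^{-\frm(\al,\beta)}$ and $(w-z)^{-\frm(\al,\beta)}$ on the two sides of the claimed equation are designed to repair. I would therefore be explicit about the binomial expansion convention at each step (following \kac[\S3.1] and the convention fixed in \S\ref{sec:conventions}), and verify that after multiplying through by the appropriate power of $(z-w)$ respectively $(w-z)$ both sides become honest elements of $\lEnd(V_L)\pser{z^{\pm1}}\pser{w^{\pm1}}$ that can be compared coefficient-wise. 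Everything else is the standard Heisenberg contraction computation and the commutativity of like-signed modes.
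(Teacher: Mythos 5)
Your argument is correct and is essentially identical to the paper's proof: both compute the Heisenberg contraction $\Ga^+_\al(z)\Ga^-_\beta(w)=(1-w/z)^{(\al,\beta)}\Ga^-_\beta(w)\Ga^+_\al(z)$, push $S_\beta$ past $z^{(\al,-)}$ to pick up $z^{(\al,\beta)}$, and combine these into $(z-w)^{(\al,\beta)}$ via the binomial convention, then match against Definition~\ref{def:locality order}. The only slip is in the final translation where you wrote $(-z+w)^N$ (which expands in powers of $w$) where the lemma's right-hand side is $(w-z)^N$ (which expands in powers of $z$) and the definition uses $(-w+z)^N$; for $N<0$ these differ, and the correct identity to invoke is $(w-z)^N=(-1)^N(-w+z)^N$, which holds because both expand in $z$ --- but this does not affect the substance of the proof.
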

\begin{proof}
First of all, we have
$$\Ga^+_\al(z)\Ga^-_\beta(w)
=\rbr{1-\frac wz}^{\frm(\al,\beta)}
\Ga^-_\beta(w)\Ga^+_\al(z)$$
which follows from the fact that 
$$\exp\rbr{
\sbr{\sum_{n>0}\frac{\al_n}nz^{-n},
\sum_{n<0}\frac{\beta_n}nw^{-n}}
}
=\exp\rbr{(\al,\beta)\sum_{n>0}\frac{(w/z)^n}{-n}}
=\rbr{1-\frac wz}^{\frm(\al,\beta)}.
$$
Now we obtain
\begin{multline}\label{Gamma-product}
\Ga_\al(z)\Ga_\beta(w)
=S_\al \Ga^-_\al(z)\Ga^+_\al(z)z^{\frm(\al,-)}\cdot
S_\beta \Ga^-_\beta(w)\Ga^+_\beta(w)w^{\frm(\beta,-)}\\
=z^{\frm(\al,\beta)}S_{\al+\beta}
\Ga^-_\al(z)\Ga^+_\al(z)\Ga^-_\beta(w)\Ga^+_\beta(w)
z^{\frm(\al,-)} w^{\frm(\beta,-)}
\\
=(z-w)^{\frm(\al,\beta)}S_{\al+\beta}
\Ga^-_\al(z)\Ga^-_\beta(w)\Ga^+_\al(z)\Ga^+_\beta(w)
z^{(\al,-)} w^{(\beta,-)}
\end{multline}
and similarly for $\Ga_\beta(w)\Ga_\al(z)$.
This implies the first claim of the lemma. 
For the second claim, it is enough to note that $(w-z)^{-\frm(\al,\beta)}=(-1)^{\frm(\al,\beta)}(-w+z)^{-\frm(\al,\beta)}$ and recall Definition~\ref{def:locality order}. 
\end{proof}

Note that the weight of $z$ is $-1$ and the weight of $\al_n$ is $-n$. Therefore $\al_n z^{-n}$ has weight zero and the same is true for $\Ga_\al^{\pm}(z)$.
The element $S_\al z^{\frm(\al,-)}\ket\beta=z^{\frm(\al,\beta)}\ket{\al+\beta}$ has weight $\oh\frm(\al,\al)+\oh\frm(\beta,\beta)$.
We conclude that the operator $\Ga_\al(z)$ has weight $\oh\frm(\al,\al)$ which is also the weight of $\ket\al$.
The total Fock space $V_L$ has a structure of an $L$-graded vertex algebra, called the \idef{lattice vertex algebra}, that extends the vertex algebra structure on $V_0$ and satisfies
$$Y(\ket\al,z)=\Ga_\al(z).$$

\subsubsection{Principal free vertex algebras}
\label{sec:princ}
Let $(e_i)_{i\in I}$ be a basis of the lattice $L$.
Recall that, according to Lemma \ref{VO locality}, the fields $\Gamma_i(z):=\Gamma_{e_i}(z)$ and $\Gamma_j(z):=\Gamma_{e_j}(z)$ on $V_L$ are local of order $-\frm(e_i,e_j)$.
We define the locality function
$$\cN:I\xx I\to\bZ,\qquad \cN(i,j)=-(e_i,e_j)$$
and the degree map
$$\deg:I\to L\xx\oh\bZ,\qquad \deg(i)=\rbr{e_i,\oh(e_i,e_i)}.$$
Let $\P$ be the free vertex algebra \wrt locality \cN and degree $\deg$,
which we shall also call the
\emph{principal free vertex algebra}.
By the universality property of the free vertex algebra, there is a unique morphism of graded vertex algebras
$$\P\to V_L,\qquad i\mto \ket{e_i}.$$
By \cite[Theorem~2]{MR1935501}, this morphism is injective.
Therefore the free vertex algebra $\P$ can be identified with the vertex subalgebra of $V_L$ generated by elements $\ket{e_i}$, $i\in I$.
This subalgebra was studied in \cite{MR2967107} under the name \idef{principal subalgebra}.

\section{CoHA and the free vertex algebra}\label{coha and free}

From now onwards, we restrict ourselves to the situation considered in \S\ref{sec coha}, so that
$Q$ is a symmetric quiver with the set of vertices $I$, and $L=\bZ^I$ is a lattice equipped with the Euler form~$\hi$ of the quiver $Q$.
Then $I$ parametrizes the standard basis of $L$, and we can consider the corresponding principal free vertex algebra $\P$ from \S\ref{sec:princ},
which we shall usually denote by~$\P_Q$.
The locality function of that free vertex algebra is given by $$\cN(i,j)=-\hi(e_i,e_j),$$
hence we see that $\cN(i,j)\ge0$ for $i\ne j$ and $\cN(i,i)\ge-1$. 
This property together with Lemma \ref{locality -1 and 0} implies that the construction of \S\ref{sec:free vert constr}
for free vertex algebras of non-negative locality applies.

In this section, we shall unravel a deep relationship between the principal free vertex algebra $\P_Q$ and the \COHA $\cH_Q$. In \S\ref{graded dual}, we shall see that there is an isomorphism of vector spaces between the graded dual $\P_Q\dual$ and $\cH_Q$. In \S\ref{coalg vertex}, we shall show that this isomorphism allows one to recover the algebra structure of $\cH_Q$. Finally, in \S\ref{efimov thm}, we use our results to obtain a new proof of Efimov's positivity theorem for Donaldson--Thomas invariants.

\subsection{Three incarnations}
\label{sec:three}
In this section, we shall discuss three different incarnations of the same vertex algebra $\P_Q$: 
as a principal free algebra,
as a free algebra of non-negative locality, 
and as the universal enveloping algebra of a \LieCon algebra.
From \eqref{locality3}, we know that for $\cN(i,j)\ge0$, the locality relation between $Y(i,z)$ and $Y(j,z)$ is a collection of ``honest'' Lie algebra relations (finite combinations of Lie brackets).
In our situation,
the only possible negative value of the locality function is $\cN(i,j)=-1$, in which case $i=j$ and $\hi(e_i,e_i)=1$. 

\begin{lemma}\label{locality -1 and 0}
Let $V$ be a graded vertex algebra, and suppose that $\ba\in \cF(V)_\al$ is a field with $\hi(\al,\al)\equiv 1\pmod{2}$. Then $\ba$ is local with itself of order $-1$ if and only if it is local with itself of order $0$. 
\end{lemma}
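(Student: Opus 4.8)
The plan is to unwind the locality condition of order $-1$ using the binomial expansion convention and show that it collapses to the order $0$ condition precisely because of the sign $(-1)^{\hi(\al,\al)} = -1$.

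First I would write down the order $N$ locality relation from Definition~\ref{def:locality order} for $N = -1$ and $N = 0$, using~\eqref{locality1}. For a field $\ba \in \cF(V)_\al$ local with itself, the relevant sign is $(-1)^{\frm(\al,\al)} = -1$ since $\hi(\al,\al)$ is odd. For $N = 0$ the relation is simply $[\ba(z), \ba(w)] = 0$, i.e. $\ba(z)\ba(w) = \ba(w)\ba(z)$. For $N = -1$ the relation reads
$$(z-w)^{-1}\ba(z)\ba(w) + (-w+z)^{-1}\ba(w)\ba(z) = 0,$$
where both $(z-w)^{-1}$ and $(-w+z)^{-1}$ are to be expanded via the binomial convention, i.e. as power series in $w$ and in $z$ respectively.

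The key computational step is then to expand these two rational prefactors. We have $(z-w)^{-1} = \sum_{k \ge 0} z^{-1-k} w^k$ and $(-w+z)^{-1} = \sum_{k \ge 0} (-1)^{-1-k}(-1)^{?}\dots$; more carefully, $(-w+z)^{-1} = \sum_{k \ge 0} \binom{-1}{k} (-w)^{-1-k} z^k = -\sum_{k\ge 0} (-1)^{-k} w^{-1-k} z^k$ after sorting out signs, so it is a series with negative powers of $w$. The crucial observation is that the order $-1$ relation, when multiplied through and compared coefficient by coefficient in $z^{m}w^{n}$, separates into two families: the part coming from $(z-w)^{-1}\ba(z)\ba(w)$ lives in nonnegative powers of $w$, and the part from $(-w+z)^{-1}\ba(w)\ba(z)$ lives in negative powers of $w$ (relative to the natural "middle" of the field expansion). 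One then extracts, as in the derivation of~\eqref{locality2}--\eqref{locality3}, that the order $-1$ condition is equivalent to a collection of identities among the modes $a(n)$, and I would check that with the sign $-1$ these identities reduce exactly to $[a(m), a(n)] = 0$ for all $m, n$, which is the mode form of the order $0$ condition. The reverse implication is immediate since a field local with itself of order $0$ is automatically local of every order $N \ge 0$, and here the point is that order $-1$ is not stronger.

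The main obstacle, and the place to be careful, is the bookkeeping of the binomial expansion convention for the negative exponent $-1$: unlike the $N \ge 0$ case, $(z-w)^{-1}$ and $(-w+z)^{-1}$ are genuinely different doubly-infinite series, and it is precisely the interaction of these two expansions with the sign $(-1)^{\hi(\al,\al)}$ that makes the collapse happen — if the sign were $+1$ (the even case) the two expansions would \emph{not} cancel and order $-1$ locality would be strictly stronger. So I would present the argument at the level of modes: extract the coefficient of $z^{-m-1}w^{-n-1}$ from the order $-1$ relation (using $\binom{-1}{k} = (-1)^k$), obtaining
$$\sum_{k \ge 0} a(m-k)\,a(n+k) \;+\; (-1)\sum_{k \le -1}(-1)^{k}\binom{-1}{-1-k} a(n+k)\,a(m-k) = 0,$$
reindex the second sum, and verify that with the sign $-1$ this is equivalent to $[a(m'),a(n')] = 0$ for all $m', n'$. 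Conversely, if all modes commute, both sums telescope against each other and the order $-1$ relation holds. This yields the claimed equivalence.
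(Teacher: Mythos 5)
Your overall approach matches the paper's: pass to modes via \eqref{locality2}, use the odd-parity sign $(-1)^{\hi(\al,\al)}=-1$, and show that the order $-1$ relation collapses to the order $0$ one. However, there are two things to fix.

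First, your intermediate mode relation has the wrong sign, and the sign is not an incidental detail here. Specializing \eqref{locality2} to $N=-1$, $a=b$, $\al=\beta$, and using that $(-1)^k\binom{-1}{k}=1$ for $k\ge 0$, that $(-1)^k\binom{-1}{-1-k}=-1$ for $k\le -1$, and that $-(-1)^{\hi(\al,\al)}=+1$, one gets
$$\sum_{k\ge0}a(m-k)a(n+k)\;-\;\sum_{k\le-1}a(n+k)a(m-k)=0,$$
with a \emph{minus} between the two sums, whereas your displayed formula comes out with a plus. That minus is exactly what makes the collapse work: reindexing $k\mapsto m-n-k$ in the second sum (assume $m\ge n$ by symmetry of the locality condition) turns it into $\sum_{k\ge m-n+1}a(m-k)a(n+k)$, so the two infinite tails cancel \emph{identically} — no anticommutativity needed yet — leaving the finite relation $\sum_{k=0}^{m-n}a(m-k)a(n+k)=0$. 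That finite sum then vanishes once $[a(p),a(q)]=0$ for all $p,q$, by pairing the $k$-term with the $(m-n-k)$-term (and using $a(p)^2=0$ for the middle term if $m-n$ is even). With a plus sign, the reindexed tails reinforce instead of cancelling, you are left with a genuinely infinite sum, and no reduction to the order $0$ condition is available — so the argument would not go through.

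Second, you have the easy and hard directions reversed. Order $-1$ locality implies order $0$ locality immediately: multiplying the order $-1$ relation by $(z-w)$ annihilates both rational prefactors (since $(z-w)(z-w)^{-1}=1$ and $(z-w)(-w+z)^{-1}=1$). The nontrivial direction is order $0$ implies order $-1$; this is precisely what the finite-sum computation above establishes, and it cannot be dismissed by noting that order $0$ implies order $N$ for $N\ge 0$, since that observation says nothing about negative orders.
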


\begin{proof}
We already know that locality of order $-1$ implies locality of order $0$, so we only need to prove the reverse implication. The locality of order  $-1$ is equivalent to the relations 
\begin{equation}\label{loc-1}
\sum_{k\ge0}a(m-k)a(n+k)-
\sum_{k<0}a(n+k)a(m-k)=0
\end{equation}
for all $m,n\in\bZ$.
The locality of order  $0$ is equivalent to  
\begin{equation}\label{loc0-2}
[a(m),a(n)]=a(m)a(n)+a(n)a(m)=0,\qquad m,n\in\bZ.
\end{equation}
It remains to note that, assuming that $m\ge n$, Condition \eqref{loc-1} can be rewritten as 
$$
\sum_{k=0}^{m-n}a(m-k)a(n+k)=0,
$$
which vanishes whenever \eqref{loc0-2} holds. 
\end{proof}

It follows from Lemma \ref{locality -1 and 0} that 
if $\cN(i,i)=-1$, then the field $Y(i,z)$ is local of order $0$ with itself, so the principal free vertex algebra $\P_Q$ coincides with the free vertex algebra for the non-negative locality function $\cN_+:=\max(\cN,0)$.
In particular, even though some values of the locality function for $\P_Q$ can be negative, Proposition \ref{free universal enveloping} implies that $\P_Q$ is isomorphic to the universal enveloping vertex algebra $\cU(\fC)$ of a \LieCon algebra $\fC$.

Let us recall some notation for future reference.
We consider the sets
$$X=\sets{i(n)}{i\in I,\,n\in\bZ},\qquad
X_+=\sets{i(n)}{i\in I,\,n\ge0},$$
and define $\fL$ to be the Lie algebra generated by $X$ subject to locality relations \eqref{locality3} 
with $a,b\in I$ and $N=\cN_+(a,b)$,
and $\fL_+\sbs\fL$ to be the subalgebra generated by $X_+\sbs X$.

For every $i\in I$, we consider $\bi(z)=\sum_n i(n)z^{-n-1}\in \fL\pser{z^{\pm1}}$ and we define
$\fC\sbs \fL\pser{z^{\pm1}}$ to be the \LieCon algebra generated by these series.
We know from Proposition \ref{free universal enveloping} that $\fL\iso\Coeff(C)$ and that we have a direct sum decomposition
$$\fL=\fL_+\oplus\fL_-,$$
where $\fL_{\pm}\sbs\fL$ are Lie subalgebras closed under the derivation 
\begin{equation}
\dd:\fL\to\fL,\qquad
\dd(i(n))=-n i(n-1).
\end{equation}
We have an isomorphism of graded vector spaces (compatible with derivations)
\begin{equation}
C\xto{\ \sim\ }\Coeff(C)_-\xto{\ \sim\ }\fL_-,\qquad
\ba(z)\mto \ba(z)(-1)\mto\Res_z z\inv\ba(z).
\end{equation}
Moreover the principal free vertex algebra $\P_Q$ is isomorphic to the universal enveloping vertex algebra
of the \LieCon algebra $C$
\begin{equation}
\P_Q\iso \cU(C)=U(\fL)\ts_{U(\fL_+)}\bQ.
\end{equation}
We also have an isomorphism of graded vector spaces 
\begin{equation}\label{eq:P-L}
\P_Q\iso U(\fL)\ts_{U(\fL_+)}\bQ\iso U(\fL_-).
\end{equation}

\subsection{The graded dual of the principal free vertex algebra}\label{graded dual}
Our first goal is to exhibit a relationship between the underlying objects of $\cP_Q$ and $\cH_Q$. 
Recall from \eqref{degrees} that $\P_Q$ is equipped with the $L$-degree as well as the weight 
$$\wt(i_1(n_1)\dots i_p(n)\one)
=\sum_k \rbr{\oh \hi(e_{i_k},e_{i_k})-n_k-1}\in\oh\bZ.$$
As we are aiming to relate vertex algebras to cohomological Hall algebras, we shall implement a relationship between half-integer weights and integer cohomological degrees, defining, for $u\in\P_Q$, its (cohomological) degree to be
$$\deg(u)=-2\wt(u)\in\bZ.$$
If we superpose the $L$-degree and the degree we just defined, $\P_Q$ becomes an object of $\Vect^{L\xx \bZ}$ with finite-dimensional components $\P_{Q,\bd}^n$, for $\bd\in \bN^I$ and $n\in\bZ$; this immediately follows from the fact that the lattice realisation of free vertex algebras allows us to view $\P_Q$ as a subspace of the total Fock space $V_L$. We define the graded dual space of $\cP_Q$ by the formula 
$$\P_Q\dual
=\bop_{\bd\in\bN^{I}}\P_{Q,\bd}\dual,\qquad
\P_{Q,\bd}\dual=\lHom(\P_{Q,\bd},\bQ)
=\bop_{n\in\bZ}(\P_{Q,\bd}^{-n})\dual.
$$

Let us describe this graded dual directly. For a dimension vector $\bd\in\bN^I$, let $p=\n\bd$ and $\lb j=(j_1,\dots,j_p)$ be any sequence of vertices with $e_{j_1}+\cdots+e_{j_p}=\bd$.
We shall associate to such sequence $\lb j$ and $\ksi\in\cP_{0,\bd}\dual$ an element 
$F_{\ksi,\lb j}(z_1,\ldots,z_p) \in\bQ\lser{z_1}\dots\lser{z_p}$ given by the formula
 $$ 
F_{\ksi,\lb j}(z_1,\ldots,z_p)=\prod_{k<l}(z_k-z_l)^{\cN(j_k,j_l)}\cdot
\ang{\xi, Y(j_1,z_1)Y(j_2,z_2)\cdots Y(j_p,z_p)\one}.
 $$

\begin{lemma}\label{lm:sympoly}
For any $\ksi\in\cP_{0,\bd}\dual$, the Laurent series $F_{\ksi,\lb j}(z_1,\ldots,z_p)$ is completely symmetric under the action of $\Si_p$ permuting simultaneously the vertices $j_p$ and the variables $z_p$. Moreover, this Laurent series is a polynomial in $z_1,\ldots, z_p$. 
\end{lemma}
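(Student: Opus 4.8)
The plan is to reduce the claim to two facts about the vertex operators $Y(i,z)$ on the lattice vertex algebra: a symmetry (or rather ``locality'') statement and a regularity statement. Since $\cP_Q$ embeds into the total Fock space $V_L$ and $Y(j_k, z_k) = \Gamma_{e_{j_k}}(z_k)$ there, I would work with the explicit product
$$
Y(j_1,z_1)\cdots Y(j_p,z_p)\one
= \Gamma_{e_{j_1}}(z_1)\cdots\Gamma_{e_{j_p}}(z_p)\ket 0 .
$$
Using the product formula \eqref{Gamma-product} from Lemma \ref{VO locality} iterated $p$ times, this equals
$$
\Bigl(\prod_{k<l}(z_k-z_l)^{\hi(e_{j_k},e_{j_l})}\Bigr)\,
S_{\bd}\,\Bigl(\prod_k \Gamma^-_{e_{j_k}}(z_k)\Bigr)\Bigl(\prod_k \Gamma^+_{e_{j_k}}(z_k)\Bigr)\ket 0 ,
$$
where the $z^{(\al,-)}$ factors all act trivially on $\ket 0$. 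Because $\cN(j_k,j_l) = -\hi(e_{j_k},e_{j_l})$, the prefactor $\prod_{k<l}(z_k-z_l)^{\cN(j_k,j_l)}$ in the definition of $F_{\ksi,\lb j}$ exactly cancels the $\prod_{k<l}(z_k-z_l)^{\hi(e_{j_k},e_{j_l})}$ appearing above, leaving
$$
F_{\ksi,\lb j}(z_1,\dots,z_p)
= \Bigl\langle \ksi,\; S_\bd\,\prod_k \Gamma^-_{e_{j_k}}(z_k)\,\ket 0\Bigr\rangle ,
$$
since $\prod_k\Gamma^+_{e_{j_k}}(z_k)\ket 0 = \ket 0$ (the annihilation parts kill the vacuum). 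Now $\Gamma^-_{e_{j_k}}(z_k) = \exp\bigl(-\sum_{n<0} \tfrac{(e_{j_k})_n}{n} z_k^{-n}\bigr)$ is an exponential in the \emph{creation} operators with \emph{strictly positive} powers of $z_k$ only, and the $\Gamma^-$'s for different indices commute. Hence the right-hand side is manifestly a formal power series in $z_1,\dots,z_p$ with no negative powers — this gives regularity, and in fact polynomiality after pairing with $\ksi$, because $\ksi$ is supported in a single $L$-degree $\bd$ and $\cP_{0,\bd}$ is finite-dimensional in each weight, so only finitely many monomials in $\prod_k\Gamma^-_{e_{j_k}}(z_k)\ket0$ survive the pairing (the total weight is fixed once $\bd$ and the weight of $\ksi$ are fixed). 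This settles the ``polynomial'' assertion.

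For the symmetry assertion, I would argue that permuting $(j_k,z_k)$ and $(j_l,z_l)$ for adjacent indices $k,l$ leaves $F_{\ksi,\lb j}$ unchanged; since adjacent transpositions generate $\Si_p$, this suffices. The key input is Lemma \ref{VO locality}: $(z_k-z_l)^{-\hi(e_{j_k},e_{j_l})}\Gamma_{e_{j_k}}(z_k)\Gamma_{e_{j_l}}(z_l)$ is symmetric in $(z_k,j_k)\leftrightarrow(z_l,j_l)$, i.e.\ it equals $(z_l-z_k)^{-\hi(e_{j_k},e_{j_l})}\Gamma_{e_{j_l}}(z_l)\Gamma_{e_{j_k}}(z_k)$. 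Multiplying the whole product by the full prefactor $\prod_{r<s}(z_r-z_s)^{\cN(j_r,j_s)}$ and isolating the single factor $(z_k-z_l)^{\cN(j_k,j_l)} = (z_k-z_l)^{-\hi(e_{j_k},e_{j_l})}$ that sits between the two operators being swapped, one sees that swapping the pair turns $\Gamma_{e_{j_k}}(z_k)\Gamma_{e_{j_l}}(z_l)$ together with its companion factor into the opposite-order product with its companion factor, while all other factors $(z_r-z_s)^{\cN(j_r,j_s)}$ are unaffected or merely relabeled consistently. Here one must be slightly careful: the remaining $(z_r - z_s)^{\cN}$ with $r$ or $s$ equal to $k$ or $l$ get relabeled but, because $\cN$ is a \emph{symmetric} function of its vertex arguments and $(z_r-z_s)$ vs.\ $(z_s-z_r)$ discrepancies only occur for negative exponents, one needs to confront expansion-convention issues; however, the identity we are proving is between genuine polynomials (once paired with $\ksi$), so after multiplying through by a large enough power $\prod(z_r-z_s)^M$ to clear all negative exponents the manipulation becomes an identity of honest Laurent polynomials and the binomial-expansion subtleties disappear.

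The main obstacle I anticipate is precisely this last bookkeeping: the prefactor $\prod_{k<l}(z_k-z_l)^{\cN(j_k,j_l)}$ mixes positive and negative exponents (recall $\cN(i,i)$ can be $-1$, and $\cN(i,j)$ can be large and positive), so one cannot naively commute operators past each other without tracking which expansion ring one is in. My remedy is to do all the commuting inside $\Gamma^\pm$-exponentials where everything is an honest (one-sided) power series, as in the displayed reduction above, rather than trying to symmetrize the $\Gamma$-operators directly; once $F_{\ksi,\lb j}$ is rewritten as $\langle\ksi, S_\bd\prod_k\Gamma^-_{e_{j_k}}(z_k)\ket0\rangle$, the $\Gamma^-$'s genuinely commute and the symmetry under simultaneous permutation of $(j_k,z_k)$ is visible by inspection, because the data $S_\bd$ and $\ket0$ are permutation-invariant and the product $\prod_k \Gamma^-_{e_{j_k}}(z_k)$ only depends on the multiset of pairs $(e_{j_k},z_k)$. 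This simultaneously re-derives both the symmetry and the polynomiality from the same formula, which I think is the cleanest route.
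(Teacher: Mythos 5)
Your proof is correct, and it takes a genuinely different route from the paper's. The paper argues intrinsically in $\P_Q$: the locality relation $(z-w)^{\cN(i,j)}Y(i,z)Y(j,w)=(w-z)^{\cN(i,j)}Y(j,w)Y(i,z)$ immediately gives the $\Si_p$-symmetry, and polynomiality is then deduced by noting that $\one$ is annihilated by each $i(n)$ with $n\ge0$ (so there are no negative powers of $z_p$), that the prefactor could only introduce negative powers via the geometric expansion of $(z_k-z_l)^{-1}$ when $\cN(i,i)=-1$, and that this never affects the last variable attached to such a vertex, so by the already-established symmetry no negative powers appear in any variable. You instead pass at once to the lattice realisation $\P_Q\hookrightarrow V_L$ from \S\ref{sec:princ} and iterate the product formula \eqref{Gamma-product}, so that the prefactor cancels against the $(z_k-z_l)^{\hi(e_{j_k},e_{j_l})}$ factors and $F_{\ksi,\lb j}$ collapses to $\bigl\langle\ksi,\, S_\bd\prod_k\Ga^-_{e_{j_k}}(z_k)\ket0\bigr\rangle$, which is visibly symmetric and a power series; the finiteness of the support of $\ksi$ in the graded dual then gives polynomiality. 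Interestingly, the paper's own proof of the subsequent Proposition~\ref{prop:sympoly} (for surjectivity of $F$) lands on exactly this display, so your argument unifies the two proofs at the cost of invoking the embedding theorem of~\cite{MR1935501} slightly earlier than the paper does. One small point of rigour worth making explicit when writing this up: the coefficients of $S_\bd\prod_k\Ga^-_{e_{j_k}}(z_k)\ket0$ live a priori in $V_\bd$ rather than $\cP_{Q,\bd}$, so before pairing with $\ksi\in\cP_{Q,\bd}\dual$ one should observe that this series equals $\prod_{k<l}(z_k-z_l)^{\cN(j_k,j_l)}\cdot Y(j_1,z_1)\cdots Y(j_p,z_p)\one$, whose coefficients are manifestly in $\cP_{Q,\bd}$, so that the pairing is well defined (and independent of any extension of $\ksi$ to $V_\bd$).
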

\begin{proof}
By the locality properties of the free vertex algebra $\P_Q$, we have
$$(z-w)^{\cN(i,j)}Y(i,z)Y(j,w)
=(w-z)^{\cN(i,j)}Y(j,w)Y(i,z),\qquad i,j\in I.$$
This shows that $F_{\ksi,\lb j}(z_1,\ldots,z_p)$ is fully symmetric under the action of $\Si_p$. 

To prove that this expression is a polynomial, we shall argue as follows. We note that since the vector $\one$ of $\P_Q$ is annihilated by each generator $i(n)$ with $n\ge 0$ (appearing in $Y(i,z)$ as the coefficient of $z^{-n-1}$), the expression $F_{\ksi,\lb j}(z_1,\ldots,z_p)$ does not contain any negative power of the last variable $z_p$. 
Multiplying by $\prod_{k<l}(z_k-z_l)^{\cN(j_k,j_l)}$ has the effect of making the result symmetric, so it would be almost completing the proof, except that we may have vertices with $\cN(i,i)=-1$, which would create negative powers from the geometric series expansion. However, this would never create negative powers of the last variable corresponding to such vertex, and since the expression is fully symmetric, there are no negative powers at all. Finally, since we are dealing with the graded dual vector space, the linear functional $\ksi$ is supported at finitely many degrees, and so our Laurent series is in fact a polynomial.  
\end{proof}

We shall now state and prove the main result of this section. For a dimension vector $\bd\in\bN^I$, we now make a choice of a concrete sequence of vertices $\lb i=(i_1,\dots,i_p)$ with $e_{i_1}+\cdots+e_{i_p}=\bd$. For that, we choose an order of $I$, use this order to identify $I$ with $\set{1,\dots,r}$, where $r=|I|$, and put
$$i_{\bd_1+\dots+\bd_{j-1}+k}=j,\qquad j\in I,\,1\le k\le \bd_j.$$
We shall use formal variables $z_{\bd_1+\dots+\bd_{j-1}+k}=x_{j,k}$ associated to this sequence of vertices.
Recall from \S\ref{sec:sguffle} that
$$\La_\bd
=\bts_{j\in I}\La_{\bd_j}
=\bQ[x_{j,k}\col 
j\in I,\, 1\le k\le \bd_j]^{\Si_\bd}$$
is a graded algebra with $\deg(x_{j,k})=2$ (corresponding to weight $-1$, \cf \S\ref{graded VA}).

\begin{proposition}\label{prop:sympoly}
We have an isomorphism of graded vector spaces
\begin{equation}
F\colon \cP_Q\dual\to \cH_Q.
\end{equation}
Specifically, for any dimension vector $\bd$, the isomorphism
$\P_{Q,\bd}\dual\to  \cH_{Q,\bd}=\La_\bd[-\hi(\bd,\bd)]$
is given by the formula
$$
\ksi\mapsto F_\ksi=\prod_{k<l}(z_k-z_l)^{\cN(i_k,i_l)}\cdot\left.
\ang{\xi, Y(i_1,z_1)Y(i_2,z_2)\cdots Y(i_p,z_p)\one}\right|_{z_{\bd_1+\dots+\bd_{j-1}+k}=x_{j,k}}.$$
\end{proposition}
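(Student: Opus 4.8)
The plan is to realise $\P_Q$ inside the lattice vertex algebra $V_L$ of \S\ref{lattice VA}, where $Y(i,z)$ becomes the vertex operator $\Ga_{e_i}(z)$, and to compute the product of vertex operators occurring in $F_\ksi$ completely explicitly. Iterating the product formula \eqref{Gamma-product} and using that $\Ga^+_{e_i}(z)$, the operator $z^{\hi(e_i,-)}$, and all modes $(e_i)_n$ with $n>0$ annihilate the vacuum $\ket 0$, one obtains
\begin{equation*}
Y(i_1,z_1)\cdots Y(i_p,z_p)\one
=\prod_{1\le k<l\le p}(z_k-z_l)^{\hi(e_{i_k},e_{i_l})}\cdot
\exp\!\left(\sum_{k=1}^{p}\sum_{m\ge1}\frac{(e_{i_k})_{-m}}{m}\,z_k^m\right)\ket\bd ,
\end{equation*}
where $\bd=e_{i_1}+\dots+e_{i_p}$ and $\ket\bd$ is the generator of the Fock space $V_\bd$. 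Since $\cN(i_k,i_l)=-\hi(e_{i_k},e_{i_l})$, the Vandermonde-type prefactor in the definition of $F_\ksi$ cancels the one on the right, and, collecting the variables of each vertex $j$ into the power sums $p_m(x_{j,1},\dots,x_{j,\bd_j})=\sum_{k\col i_k=j}z_k^m$, we arrive at the order-independent expression
\begin{equation*}
F_\ksi=\Big\langle\xi,\;\exp\!\Big(\sum_{j\in I}\sum_{m\ge1}\frac{(e_j)_{-m}}{m}\,p_m(x_{j,1},\dots,x_{j,\bd_j})\Big)\ket\bd\Big\rangle .
\end{equation*}

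Next I would identify which subspace of $V_\bd$ is being paired with $\ksi$, and claim it is all of $\P_{Q,\bd}$. Since $\P_Q$ is generated as a vertex algebra by the elements $i=i(-1)\one$, the space $\P_{Q,\bd}$ is spanned by the coefficients of $Y(j_1,w_1)\cdots Y(j_p,w_p)\one$ over all orderings $\lb j$ of the vertices of $\bd$; by the lattice formula each such product is a Vandermonde-type factor times the (order-independent) exponential $\exp\big(\sum_{j,m}\tfrac{(e_j)_{-m}}{m}p_m(x_{j,\bullet})\big)\ket\bd$, and this factor is invertible in the iterated Laurent series ring $V_\bd\lser{w_1}\cdots\lser{w_p}$ in which everything lives, so the coefficients of the product are finite $\bQ$-combinations of those of the exponential, and conversely (inverting the factor for the standard ordering). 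Thus $\P_{Q,\bd}$ is exactly the span of the coefficients of that exponential. Using $V_\bd\iso\bts_{j\in I}\bQ[(e_j)_{-m}\col m\ge1]$ and the Cauchy identity, the exponential expands as
\begin{equation*}
\exp\!\Big(\sum_{j,m}\tfrac{(e_j)_{-m}}{m}p_m(x_{j,\bullet})\Big)\ket\bd
=\sum_{(\la^{(j)})_{j}}\Big(\prod_{j}s_{\la^{(j)}}(x_{j,\bullet})\Big)\Big(\prod_{j}s_{\la^{(j)}}\big((e_j)_{-\bullet}\big)\Big)\ket\bd ,
\end{equation*}
the sum over tuples of partitions with $\ell(\la^{(j)})\le\bd_j$, where $s_{\la^{(j)}}\big((e_j)_{-\bullet}\big)$ denotes the Schur function evaluated on the power sums $(e_j)_{-1},(e_j)_{-2},\dots$. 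Since $\{\prod_j s_{\la^{(j)}}(x_{j,\bullet})\}$ is a basis of $\La_\bd=\bts_j\La_{\bd_j}$ and the vectors $\prod_j s_{\la^{(j)}}\big((e_j)_{-\bullet}\big)\ket\bd$ are linearly independent in $V_\bd$, the latter form a basis of $\P_{Q,\bd}$, and $F$ carries the dual basis of $\P_{Q,\bd}\dual$ bijectively onto the Schur-product basis of $\La_\bd$; in particular $F$ is an isomorphism of the underlying vector spaces (that $F_\ksi$ lands in $\La_\bd$ was already recorded in Lemma \ref{lm:sympoly}). It then remains to match the gradings, which is a short weight count: $\wt(x_{j,k})=-1$ translates into cohomological degree $2$, $\wt(\ket\bd)=\oh\hi(\bd,\bd)$, and together with $\hi(\bd,\bd)=\sum_k\hi(e_{i_k},e_{i_k})+2\sum_{k<l}\hi(e_{i_k},e_{i_l})$ and the cancellation of the prefactor this produces precisely the shift $[-\hi(\bd,\bd)]$ in $\cH_{Q,\bd}=\La_\bd[-\hi(\bd,\bd)]$.

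The step I expect to be the main obstacle is the identification of $\P_{Q,\bd}$ with the span of the coefficients of the bare exponential: one must take care that $Y(i_1,z_1)\cdots Y(i_p,z_p)\one$ and the factor $\prod_{k<l}(z_k-z_l)^{\hi(e_{i_k},e_{i_l})}$ genuinely live in a common iterated Laurent series ring (the only negative exponents being the $\cN(i,i)=-1$ ones, handled via the binomial expansion convention), that this factor is invertible there, and that extracting the coefficient of a fixed monomial involves only finitely many terms, so that the two spans truly coincide rather than one being a topological closure of the other. The remaining ingredients — the vertex operator product formula, the Cauchy identity, and the basis of symmetric polynomials in $\bd_j$ variables — are routine.
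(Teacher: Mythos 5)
Your proof is correct and follows the same essential route as the paper: pass to the lattice vertex operator realisation, expand the product of the $\Gamma$'s into a Vandermonde-type prefactor times an exponential of negative modes applied to $\ket\bd$, observe that the prefactor cancels against the one in the definition of $F_\ksi$, and then analyse the resulting exponential to establish the isomorphism. The only difference is in the bookkeeping of that last step. The paper splits the argument: injectivity is proved directly (without the lattice realisation) by multiplying $F_\ksi=0$ by the inverse prefactor to conclude that $\xi$ annihilates all spanning vectors $j_1(n_1)\cdots j_p(n_p)\one$, and surjectivity is proved by expanding the exponential into monomials in power sums and noting that those generate $\La_\bd$. You instead apply the Cauchy identity to get an explicit pairing of bases — the restricted Schur-product basis $\{\prod_j s_{\la^{(j)}}((e_j)_{-\bullet})\ket\bd\}$ of $\P_{Q,\bd}$ on one side and $\{\prod_j s_{\la^{(j)}}(x_{j,\bullet})\}$ of $\La_\bd$ on the other — which yields injectivity and surjectivity simultaneously and incidentally produces an explicit combinatorial basis of the principal subalgebra. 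Your approach also makes explicit the point (which the paper leaves implicit) that in the iterated Laurent series ring the prefactor is a unit and that extracting a fixed coefficient involves only finitely many terms, so the span of coefficients of the full product genuinely equals the span of coefficients of the bare exponential; since the prefactor lies in $\bQ\lser{z_1}\cdots\lser{z_p}$ and the exponential has only nonnegative powers, the transition matrices in each degree are finite, so your concern is indeed resolvable in the way you indicate. In short: same method, with the Schur/Cauchy refinement replacing the paper's power-sum argument; both are valid.
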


\begin{proof}
We begin with noticing that, according to Lemma \ref{lm:sympoly}, the polynomial $F_\ksi$ in variables $x_{j,k}$ is symmetric with respect to the action of the group
$\Si_\bd=\prod_{i\in I}\Si_{\bd_i}$ (the action of a permutation on the vertices $i_p$ and the variables $z_p$ is the same as the action of a permutation on the variables $z_p$ if we consider a permutation which only permutes equal vertices). Thus, $F$ is a well-defined map into $\La_\bd[-\hi(\bd,\bd)]$.

Let us first check that $F$ is a map of degree zero. Recall that $Y(i,z)$ has weight $\oh\hi(e_i,e_i)$, so by our convention $\deg(u)=-2\wt(u)$, the degree of $Y(i,z)$ is equal to $-\hi(e_i,e_i)$. Therefore $F_\ksi$ has degree
$$\deg(\ksi)+2\sum_{k<l}\cN(i_k,i_l)-\sum_k\hi(e_{i_k},e_{i_k})
=\deg(\ksi)-\sum_{k,l}\hi(e_{i_k},e_{i_l})
=\deg(\ksi)-\hi(\bd,\bd).
$$
This constant shift of degrees by $-\hi(\bd,\bd)$ means exactly that the map $$F\colon \cP_{Q,\bd}\dual\to\La_\bd[-\hi(\bd,\bd)]$$ has degree zero.

Let us show that $F$ is injective. Suppose that for some $\ksi\in\cP_{Q,\bd}\dual$ we have $F_\ksi=0$. According to Lemma \ref{lm:sympoly}, this implies that    
$$\prod_{k<l}(z_k-z_l)^{\cN(j_k,j_l)}\cdot
\ang{\xi, Y(j_1,z_1)Y(j_2,z_2)\cdots Y(j_p,z_p)\one}=0$$
for any sequence of vertices with $e_{j_1}+\cdots+e_{j_p}=\bd$. 
Since $F_\ksi$ is a polynomial, its product with $\prod_{k<l}(z_k-z_l)^{-\cN(j_k,j_l)}$ (expanded according to the binomial expansion convention) is well defined, so we may conclude that 
$$\ang{\xi, Y(j_1,z_1)Y(j_2,z_2)\cdots Y(j_p,z_p)\one}=0.$$
Extracting coefficients of individual monomials $z_1^{m_1}\cdots z_p^{m_p}$, we see that $\xi$ vanishes on all vectors $j_1(n_1)j_2(n_2)\ldots j_p(n_p)\one\in \P_{0,\bd}$, so $\xi=0$. 

Let us finally show that $F$ is surjective. For that, we shall use the lattice vertex operator realisation, replacing $Y(i_k,z_k)$ by vertex operators $\Ga_{{i_k}}(z_k)=\Ga_{e_{i_k}}(z_k)$ and $\one$ by $\ket0$. Using \eqref{Gamma-product}, we can express $F_\ksi$ explicitly as
$$\bra\xi \Ga^-_{{i_1}}(z_1)\dots\Ga^-_{{i_p}}(z_p)\ket\bd|_{z_{\bd_1+\dots+\bd_{j-1}+k}=x_{j,k}} .$$
We note that the expression 
$$\Ga^-_{i_1}(z_1)\dots\Ga^-_{i_p}(z_p)\ket\bd|_{z_{\bd_1+\dots+\bd_{j-1}+k}=x_{j,k}}$$
can be written as 
 $$
\prod_{j\in I}\exp\left(\sum_{n>0}e_{j,-n}\frac{\sum_{k=1}^{\bd_j}x_{j,k}^{n}}{n}\right)\ket\bd,$$
which, after expanding the exponential, is the formal infinite sum of all basis vectors of the Fock space $V_\bd$ with coefficients being scalar multiples of the corresponding monomials in the power sum $\Si_\bd$-symmetric functions \cite{Macdonald}. Thus, we may obtain all such monomials when take linear functions $\xi$ obtained by restriction to $\P_Q$ of the basis of $V_\bd^\vee$ dual to the standard basis of $V_\bd$.
Since the power sum symmetric functions generate $\La_\bd$, the surjectivity claim follows. 
\end{proof}

Using our result, we can immediately recover the formula for the Poincar\'e series $Z(\cP_Q,x,q)$ proved in \mil[Theorem 5.3]
using combinatorial bases for free vertex algebras. 

\begin{corollary}\label{cor:char free vertex}
We have
$$Z(\cP_Q,x,q)
=\sum_{\bd\in\bN^I}
\frac{(-q^{\oh})^{\hi(\bd,\bd)}}{(q)_\bd}x^\bd.
$$
\end{corollary}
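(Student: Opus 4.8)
The plan is to obtain the formula for free from the vector space isomorphism $F\colon\cP_Q\dual\isoto\cH_Q$ of Proposition~\ref{prop:sympoly}, bypassing the combinatorial bases of free vertex algebras used in \mil. The only extra input is the elementary observation that, for an object $M\in\Vect^\bZ$ with finite-dimensional components, the character of the graded dual $M\dual$ is obtained from $\ch(M)$ by the substitution $q\mapsto q\inv$; this is immediate from $\dim(M\dual)^k=\dim M^{-k}$.

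First I would apply this observation in each $L$-degree to $M=\cP_{Q,\bd}$, which gives $\ch(\cP_{Q,\bd}\dual)$ as the image of $\ch(\cP_{Q,\bd})$ under $q\mapsto q\inv$. Since, by Proposition~\ref{prop:sympoly}, $F$ restricts to a degree-preserving isomorphism $\cP_{Q,\bd}\dual\isoto\cH_{Q,\bd}$, the former character equals $\ch(\cH_{Q,\bd})$, so that $\ch(\cP_{Q,\bd})$ is the image of $\ch(\cH_{Q,\bd})$ under $q\mapsto q\inv$. Next I would insert the value $\ch(\cH_{Q,\bd})=q^{-\oh\hi(\bd,\bd)}/(q\inv)_\bd$ computed in the proof of Proposition~\ref{char coha}: under $q\mapsto q\inv$ the prefactor $q^{-\oh\hi(\bd,\bd)}$ becomes $q^{\oh\hi(\bd,\bd)}$ and each factor $(q\inv)_n=\prod_{k=1}^n(1-q^{-k})$ becomes $(q)_n=\prod_{k=1}^n(1-q^k)$, so that $\ch(\cP_{Q,\bd})=q^{\oh\hi(\bd,\bd)}/(q)_\bd$.

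Finally I would substitute this into the definition~\eqref{part function} of the Poincar\'e series, recalling that the bilinear form on $L=\bZ^I$ is the Euler form $\hi$, and use $(-1)^{\hi(\bd,\bd)}q^{\oh\hi(\bd,\bd)}=(-q^\oh)^{\hi(\bd,\bd)}$ to arrive at the asserted expression. I do not expect a real obstacle: all the content sits in Propositions~\ref{prop:sympoly} and~\ref{char coha}, and the only point deserving attention is keeping track of the two flips in play — the swap $q\leftrightarrow q\inv$ produced by the graded dual and the sign $(-1)^{(\bd,\bd)}$ built into the definition of $Z$ — which together are precisely what turns $A_Q(x,q)=Z(\cH_Q,x,q)$ into $Z(\cH_Q,x,q\inv)$, the claimed value of $Z(\cP_Q,x,q)$.
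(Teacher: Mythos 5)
Your proposal is correct and follows essentially the same route as the paper: both proofs combine Proposition~\ref{prop:sympoly} (the isomorphism $\cP_Q\dual\iso\cH_Q$), Proposition~\ref{char coha} (the formula for $Z(\cH_Q,x,q)$), and the observation that passing to the graded dual amounts to $q\mapsto q\inv$ on Poincar\'e series. The paper phrases this compactly as $Z(\cP_Q,x,q)=Z(\cP_Q\dual,x,q\inv)=Z(\cH_Q,x,q\inv)$, whereas you unfold the same computation degree by degree; the content is identical.
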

\begin{proof}
By Proposition \ref{char coha}, we have
$$Z(\cH_Q,x,q)
=\sum_{\bd\in\bN^I}
\frac{(-q^\oh)^{-\hi(\bd,\bd)}}{(q\inv)_\bd}x^\bd.$$
On the other hand by the established isomorphism we have
$Z(\cP_Q,x,q)=Z(\cP_Q\dual,x,q\inv)=Z(\cH_Q,x,q\inv)$.
\end{proof}

\subsection{The coalgebra structure of the principal free vertex algebra}\label{coalg vertex}

Our next goal is to unravel a canonical coproduct on $\P_Q$
(\cf Proposition~\ref{li-coprod}) and to relate it to the product on $\cH_Q$. 
The universal enveloping algebra $U(\fL)$ is a bialgebra,
with the canonical coproduct $\Delta$.
Because of this coproduct, the tensor product
$\P_Q\otimes\P_Q$ acquires a $U(\fL)$-module structure; note that the action of $U(\fL)$ has obvious signs arising from the braiding.  

\begin{proposition}\label{prop:coalg}
There exists a unique morphism of $U(\fL)$-modules $\delta\colon\P_Q\to \P_Q\otimes\P_Q$ for which $\delta(\one)=\one\otimes\one$.
This morphism makes $\P_Q$ a cocommutative coassociative coalgebra.
\end{proposition}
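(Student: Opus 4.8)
The plan is to leverage the fact, recalled in \S\ref{sec:three} (via the construction of $\P_Q$), that $\P_Q\iso U(\fL)/U(\fL)X_+$ is a cyclic $U(\fL)$-module generated by $\one$, the image of $1$; in particular $x\one=0$ for every $x\in X_+$. The key elementary observation is that for any left $U(\fL)$-module $N$ and any homogeneous $n\in N$ with $x\cdot n=0$ for all $x\in X_+$, the assignment $u\mapsto u\cdot n$ descends to a morphism of $U(\fL)$-modules $\P_Q\to N$ sending $\one\mapsto n$, and this is the \emph{unique} $U(\fL)$-module morphism with that value on $\one$ (uniqueness because $\one$ generates $\P_Q$). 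Everything in the proposition is then obtained by repeated application of this observation.

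First I would take $N=\P_Q\ts\P_Q$, regarded as a $U(\fL)$-module through the coproduct $\De$ of the bialgebra $U(\fL)$ recalled above, and $n=\one\ts\one$. For $x\in X_+$ one has $\De(x)=x\ts 1+1\ts x$, so $x\cdot(\one\ts\one)=(x\one)\ts\one\pm\one\ts(x\one)=0$, the sign coming from the braiding of $\cC^{\oh\bZ}$ being immaterial since both summands vanish. This yields the morphism $\delta\colon\P_Q\to\P_Q\ts\P_Q$ with $\delta(\one)=\one\ts\one$, and it is the unique such. Taking instead $N=\bQ$ with trivial $U(\fL)$-action and $n=1$ produces a counit $\eps\colon\P_Q\to\bQ$ with $\eps(\one)=1$.

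The coalgebra axioms are then forced by uniqueness. Both $(\delta\ts\id)\delta$ and $(\id\ts\delta)\delta$ are morphisms of $U(\fL)$-modules $\P_Q\to\P_Q^{\ts 3}$, where $\P_Q^{\ts 3}$ is an unambiguous $U(\fL)$-module because $\De$ is coassociative, and both send $\one\mapsto\one\ts\one\ts\one$; hence they coincide, which is coassociativity of $\delta$. Similarly $(\eps\ts\id)\delta$ and $(\id\ts\eps)\delta$ are $U(\fL)$-module endomorphisms of $\P_Q$ fixing $\one$, hence both equal $\id$, giving the counit axioms. Finally, cocommutativity of $\De$ makes the braiding $\si\colon\P_Q\ts\P_Q\to\P_Q\ts\P_Q$ into a morphism of $U(\fL)$-modules, so $\si\circ\delta$ is again a $U(\fL)$-module map with $(\si\circ\delta)(\one)=\si(\one\ts\one)=\one\ts\one=\delta(\one)$; uniqueness then gives $\si\circ\delta=\delta$, \ie $\delta$ is cocommutative.

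The argument is essentially formal, so I do not anticipate a genuine obstacle. The only points requiring care are bookkeeping: that $X_+$ consists of homogeneous elements so that all the maps in question live in $\cC^{\oh\bZ}$; that the iterated tensor powers $\P_Q^{\ts n}$ are well-defined $U(\fL)$-modules, which is precisely the coassociativity of $\De$ contained in the bialgebra structure on $U(\fL)$ recalled before the statement; and that the signs coming from the braiding never interfere, since they only ever multiply terms already annihilated by $X_+$.
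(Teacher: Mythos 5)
Your proof is correct and follows essentially the same approach as the paper: both arguments exploit the description $\P_Q\iso U(\fL)/U(\fL)X_+$ as a cyclic $U(\fL)$-module and the fact that the primitive coproduct $\Delta$ of $U(\fL)$ sends $X_+$ into $X_+\otimes\bQ + \bQ\otimes X_+$. The paper's proof stops after establishing existence and uniqueness of $\delta$ (leaving the coalgebra axioms to be read off from the subsequent identification $\P_Q\iso U(\fL_-)$), whereas you derive coassociativity, the counit axioms and cocommutativity cleanly by repeated application of the uniqueness principle; that extra step is a small but genuine tidying-up, and your bookkeeping about signs and homogeneity is sound.
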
 

\begin{proof}
Recall that $\P_Q$ may also be described as $U(\fL)/U(\fL)X_+$, where 
$$X_+=\sets{i(n)\in X}{n\ge0}.$$
The $U(\fL)$-module morphism condition implies that 
 \[
\delta(i(n)w)=(i(n)\otimes\Id+\Id\otimes i(n))\delta(w)
 \]
for all $w\in\P_Q$, and so it is clear that if the morphism $\delta$ exists, then it is unique, and is induced by $\Delta$. Thus, all we have to show is that $\Delta$ descends to the quotient by the left ideal generated by $X_+$, which follows from the property 
 \[
\Delta(X_+)\subset X_+\otimes \bQ + \bQ\otimes X_+.
 \]  
\end{proof}

Recall that we have an isomorphism of graded vector spaces \eqref{eq:P-L}
$$\P_Q\iso U(\fL)\ts_{U(\fL_+)}\bQ\iso U(\fL_-).$$
The above coproduct on $\P_Q$ can be identified with the canonical coproduct on $U(\fL_-)$ (\cf~ Proposition~\ref{li-coprod2}).
We are now ready to state and prove the main result of this section. 

\begin{theorem}\label{th:VOA-CoHA}
The isomorphism $F\colon \P_Q\dual\to\cH_Q$ of Proposition \ref{prop:sympoly} sends the product $\delta^\vee$ of the commutative associative algebra $\P_Q^\vee$ to the shuffle product of $\cH_Q$. 
\end{theorem}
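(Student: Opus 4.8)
The plan is to compute both sides of the desired identity in the lattice vertex operator realisation of $\P_Q$, where the coproduct $\delta$ admits an especially transparent description, and then compare with the shuffle formula \eqref{eq:shuffleprod}. First I would make explicit the dual product $\delta^\vee$ on $\P_Q\dual$: for $\xi\in\P_{Q,\bd}\dual$ and $\eta\in\P_{Q,\be}\dual$, the functional $\xi\cdot\eta=\delta^\vee(\xi\ts\eta)$ is characterised by $\ang{\xi\cdot\eta,\,w}=\ang{\xi\ts\eta,\,\delta(w)}$ for $w\in\P_{Q,\bd+\be}$. Since $\delta$ is the canonical coproduct on $U(\fL_-)$ under the identification \eqref{eq:P-L} (for which each $i(n)$, $n<0$, is primitive), applying $\delta$ to a monomial $i_1(n_1)\cdots i_p(n_p)\one$ yields the sum over all ways of distributing the letters $i_k(n_k)$ into an ordered pair of (ordered) submonomials. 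Dually, this means that $\ang{\xi\cdot\eta,\,i_1(n_1)\cdots i_p(n_p)\one}$ is a sum over subsets $S\subset\{1,\dots,p\}$ of products $\ang{\xi,\,(\text{letters in }S)}\ang{\eta,\,(\text{letters not in }S)}$, keeping the original order within each block. In generating-series language this says precisely that the Laurent series $\ang{\xi\cdot\eta,\,Y(j_1,z_1)\cdots Y(j_p,z_p)\one}$ equals the sum over ordered shuffles of the two defining series for $\xi$ and $\eta$.

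Next I would translate this through the isomorphism $F$ of Proposition \ref{prop:sympoly}. Fixing the standard ordered sequence of vertices attached to $\bd+\be$ and substituting $z\mapsto x$, the series $\ang{\xi,\,Y(i_1,z_1)\cdots\one}$ becomes $F_\xi/\prod_{k<l}(z_k-z_l)^{\cN(i_k,i_l)}$ (binomial expansion convention), and likewise for $\eta$; the kernel $K$ of \S\ref{sec:sguffle} is, up to the standard identity $\prod_{i,j}\prod_{k,\ell}(y_{j,\ell}-x_{i,k})^{-\hi(i,j)}=\prod(z_k-z_l)^{\cN(i_k,i_l)}$ split according to the block decomposition, exactly the correction factor relating these denominators across the two blocks. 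Carrying out the substitution on both sides, the combinatorial sum over subsets $S$ matches the sum over $(\bd,\be)$-shuffles in \eqref{eq:shuffleprod}: a subset $S$ of the correct cardinality profile corresponds to a shuffle permutation $\si$, the product of the two restricted series gives $\si(F_\xi F_\eta)$ after renaming variables, and the ratio of denominators gives precisely $\si(K)$. Summing over all $S$ and using the symmetry of $F_\xi$, $F_\eta$ (Lemma \ref{lm:sympoly}) to absorb the internal reorderings, one recovers $F_\xi * F_\eta = \sum_{\si\in\Sh(\bd,\be)}\si(F_\xi F_\eta K)$.

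To keep the bookkeeping honest it is cleanest to first establish the identity of Laurent series in all $p$ variables $z_1,\dots,z_p$ (before the substitution to the $x$-variables and before taking $\Si_\bd$-invariants), where everything is literally a formal manipulation, and only at the end restrict to the standard vertex sequence and pass to symmetric functions. The braiding signs from $\si\colon V\ts W\to W\ts V$ in $\cC$ are handled automatically: the $(-1)^{(\al,\beta)}$ appearing when one moves a block of $\eta$-letters past a block of $\xi$-letters in $\delta$ is exactly the sign $(-1)^{\hi(\bd,\be)}$ built into the braided tensor product, and it is the same sign that makes both the shuffle product and $\delta^\vee$ commutative in $\Vect^{L\xx\bZ}$, so it cancels consistently on the two sides.

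The main obstacle I anticipate is the precise matching of the rational kernel $K$ with the ratio of the ``locality denominators'' $\prod_{k<l}(z_k-z_l)^{\cN(j_k,j_l)}$ under the block splitting, including verifying that all the binomial-expansion conventions are applied in a mutually compatible way (expanding $\frac1{z_k-z_l}$ in the right variable) so that the formal series manipulations are legitimate and the poles produced by vertices with $\cN(i,i)=-1$ are correctly accounted for. Once that dictionary is set up carefully — essentially a careful repackaging of the computation \eqref{Gamma-product} together with the definition of $K$ — the rest is a matter of organising the sum over subsets into the sum over shuffles, which is routine.
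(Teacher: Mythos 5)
Your proposal is correct and follows essentially the same route as the paper's proof: both expand $\delta$ applied to $Y(i_1,z_1)\cdots Y(i_p,z_p)\one$ using primitivity of the generators, sum over splittings $A\sqcup B$ of $\{1,\dots,p\}$, factor the locality product $\prod_{k<l}(z_k-z_l)^{\cN(i_k,i_l)}$ into blocks to peel off $F_\zeta$ and $F_\xi$, and identify the cross-block factor (together with the braiding sign $(-1)^{\hi(\bd,\be)}$ and the sign $\kappa(A,B)$ from reordering) with the shuffle kernel $K$. The bookkeeping you flag as the ``main obstacle'' is indeed what the paper's proof carries out in detail, but your plan has the right ingredients and no gap.
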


\begin{proof}
Suppose that $\zeta\in\P_{Q,\bd}^\vee$, $\xi\in\P_{Q,\be}^\vee$
and $\lb i=(i_1,\dots,i_p)$ is a sequence of vertices such that $\sum_{k}e_{i_k}=\bd+\be$.
According to the proof of Proposition \ref{prop:sympoly}, 
\begin{equation}\label{eq:matrixelement}
F_{\delta^\vee(\zeta\otimes\xi)}=
\prod_{k<l}(z_k-z_l)^{\cN(i_k,i_l)}\cdot
\ang{\delta^\vee(\zeta\otimes\xi), Y(i_1,z_1)\dots Y(i_p,z_p)\one}.
\end{equation}  
The map $\delta^\vee$ is completely defined by  
$$\ang{\delta^\vee(\zeta\otimes\xi), u}=\ang{\zeta\otimes\xi, \delta(u)}$$
for all $u\in \P_Q$.
Note that 
$$\delta(Y(i_1,z_1)Y(i_2,z_2)\dots Y(i_p,z_p)\one)
=\sum_{A\sqcup B=[1,p]} (-1)^{\kappa(A,B)}\cdot
\prod_{a\in A}Y(i_{a},z_{a})\one \otimes 
\prod_{b\in B} Y(i_{b},z_{b})\one ,$$
where $\kappa(A,B)=\sum_{a>b}\chi({i_a},{i_b})$, for $A\sqcup B=[1,p]=\set{1,\dots,p}$, and the products over $A$ and $B$ are taken in the increasing order. 
For the calculation of the symmetric polynomial \eqref{eq:matrixelement}, we need to consider only $A,B$ with $\udim(A)=\sum_{a\in A}e_{i_a}=\bd$ and $\udim(B)=\be$. Recalling that the pairing of $\P_{Q,\bd}^\vee\otimes \P_{Q,\be}^\vee$ with $\P_{Q,\bd}\otimes \P_{Q,\be}$ produces an extra sign $(-1)^{\hi(\bd,\be)}$ according to the braiding, we see that $F_{\delta^\vee(\zeta\otimes\xi)}$ is equal to
 \[
\prod_{k<l}(z_k-z_l)^{\cN(i_k,i_l)}\cdot
\sum_{A\sqcup  B=[1,p]}
(-1)^{\kappa(A,B)+\hi(\bd,\be)}
\ang{\zeta,\prod_{a\in A} Y(i_{a},z_{a})\one}
\ang{\xi,\prod_{b\in B} Y(i_{b},z_{b})\one}.
 \]
The product $\prod_{k<l}(z_k-z_l)^{\cN(i_k,i_l)}$ is equal to the product of four terms, according to whether each of the two elements $k$ and $l$ belongs to $A$ or to $B$. 
Note that  
 \[
\prod_{a<a'}(z_{a}-z_{a'})^{\cN(i_{a},i_{a'})} 
\ang{\zeta,\prod_{a\in A} Y(i_{a},z_{a})\one}
 \]
is precisely $F_\zeta$, and 
 \[
\prod_{b<b'}(z_{b}-z_{b'})^{\cN(i_{b},i_{b'})} 
\ang{\xi,\prod_{b\in B} Y(i_{b},z_{b})\one}
 \]
is precisely  $F_\xi$, so we just need to investigate the factor by which their product is multiplied, that is,
\begin{multline*}
(-1)^{\kappa(A,B)+\hi(\bd,\be)}
\prod_{\ov{a<b}{a\in A,b\in B}} (z_{a}-z_{b})^{\cN(i_{a},i_{b})}
\prod_{\ov{b<a}{a\in A,b\in B}} (z_{b}-z_{a})^{\cN(i_{b},i_{a})} 
\\
=(-1)^{\kappa(A,B)+\hi(\bd,\be)}
\prod_{\ov{b<a}{a\in A,b\in B}}(-1)^{\cN(i_{a},i_{b})}
\prod_{a\in A, b\in B} (z_{a}-z_{b})^{\cN(i_a,i_b)}, 
\\
=(-1)^{\hi(\bd,\be)}
\prod_{a\in A, b\in B}(z_{a}-z_{b})^{\cN(i_a,i_b)}
=\prod_{a\in A, b\in B} (z_{b}-z_{a})^{\cN(i_a,i_b)}. 
\end{multline*}
Recalling the shuffle product formula \eqref{eq:shuffleprod}, we see that 
 \[
F_{\delta^\vee(\zeta\otimes\xi)} = F_\zeta * F_\xi,
 \]
so the isomorphism $F$ recovers precisely the shuffle product of CoHA.
\end{proof}


\subsection{A new proof of positivity of Donaldson--Thomas invariants}\label{efimov thm}
The above results together with the isomorphism
of graded vector spaces \eqref{eq:P-L} 
$$\cP_Q\iso U(\fL)\ts_{U(\fL_+)}\bQ\iso U(\fL_-)$$
have an interesting consequence: another proof of Efimov's positivity theorem for refined Donaldson--Thomas invariants \cite[Theorem 1.1]{MR2956038}. To see that, we shall study the Lie algebra $\fL_-$ in more detail.
Recall from \S\ref{sec:three} that $\fL_-$ is stable under the derivation $\dd\colon \fL\to\fL$.

\begin{theorem}
\label{th:DT}
Let $Q$ be a symmetric quiver.
Then the coalgebra $\cH_Q\dual$ has a canonical structure of a cocommutative connected vertex bialgebra.
The space of primitive elements 
$$C=P(\cH_Q\dual)\in\Vect^{L\xx\bZ}$$
is a vertex Lie algebra (also having a structure of a Lie algebra)
such that $\cH_Q\dual\iso\cU(C)$ as vertex bialgebras.
The derivation $\dd$ on $C$ has $L$-degree zero and cohomological degree $-2$,
and $C$ is a free $\bQ[\dd]$-module 
such that
the space of generators 
$$C/\dd C=\bop_{\bd\in L}W_\bd=\bop_{\bd\in L,k\in\bZ}W_\bd^k$$ 
has finite-dimensional components $W_\bd$
and $k\equiv\hi(\bd,\bd)\pmod 2$ whenever $W_\bd^k\ne0$. 
\end{theorem}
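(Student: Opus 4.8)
The plan is to bundle together the structural results established in the preceding sections and then extract the arithmetic statement about the generators. First I would invoke Theorem \ref{th:VOA-CoHA}: the isomorphism $F\colon\P_Q\dual\to\cH_Q$ of Proposition \ref{prop:sympoly} is an isomorphism of algebras, so dually $\cH_Q\dual\iso\P_Q$ as coalgebras. By Proposition \ref{prop:coalg} (together with Proposition \ref{li-coprod}), $\P_Q$ carries a canonical cocommutative coassociative coalgebra structure making it a cocommutative vertex bialgebra, and it is connected because its $L$-degrees lie in $\bN^I$ with one-dimensional degree-zero component (the example following the definition of connectedness in \S\ref{sec:VA}). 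Transporting this along $F\dual$ equips $\cH_Q\dual$ with the asserted cocommutative connected vertex bialgebra structure. By \S\ref{coha and free}, $\P_Q\iso\cU(C)$ for the \LieCon algebra $C$ from \S\ref{sec:three}, and by Proposition \ref{li-coprod2} we have $P(\P_Q)=C$ and $\P_Q\iso U(\fL_-)$ as coalgebras; hence $P(\cH_Q\dual)=C$ and $\cH_Q\dual\iso\cU(C)$ as vertex bialgebras. This also gives $C$ its Lie algebra structure via $C\iso\fL_-$.

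Next I would pin down the grading data on $C$. The derivation $\dd$ on $C$ is the one from \S\ref{sec:three}, acting by $\dd(i(n))=-n\,i(n-1)$; it has $L$-degree zero and weight $1$ as a map on $\cC^{\oh\bZ}$, so under the convention $\deg=-2\wt$ it has cohomological degree $-2$, as claimed. Freeness of $C$ as a $\bQ[\dd]$-module: here I would use that $C\iso\fL_-=\fL(-1)$ sits inside the coefficient algebra $\fL=\Coeff(C)$, and that $\dd$ is injective on $C$ — injectivity follows because $C\subset\fL\pser{z^{\pm1}}$ via $\ba(z)\mapsto\ba(z)$ with $\dd=\dd_z$, and a formal series killed by $\dd_z$ is constant, i.e. concentrated in the single weight where $C$ has no room once we know $\P_Q$ is bounded below with the weight grading inherited from $V_L$. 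Given injectivity of $\dd$, Proposition-style facts from \S\ref{sec:coeff algebra} (the $\ro$ reference) identify $C/\dd C\iso\fL(0)$, and the increasing filtration $\fL(0)\subset\fL(-1)\subset\cdots$ with $\fL(n-1)\emb\fL(n)$ for $n\ne0$ exhibits $\fL_-$ as a free $\bQ[\dd]$-module on $\fL(0)$. Thus $C=\bigoplus_{k\ge0}\dd^k(C/\dd C)$ and $C/\dd C=\bigoplus_{\bd\in L}W_\bd$ with $W_\bd=(C/\dd C)_\bd$.

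Finally I would establish the finiteness and parity of the components $W_\bd$. Finite-dimensionality of $W_\bd$ follows from finite-dimensionality of $C_\bd$ in each weight (inherited from $\P_{Q,\bd}$ being a subspace of the Fock space $V_\bd$, which has finite-dimensional weight components), together with the boundedness below of the weight grading — so the surjection $C_\bd\onto W_\bd$ from finitely many weights has finite-dimensional image; more precisely one reads off $\ch(W_\bd)$ from the $\Exp$-factorisation. Indeed, combining Corollary \ref{cor:char free vertex} with the PBW-type identity $Z(\cU(C))=Z(U(\fL_-))$ and the free $\bQ[\dd]$-module structure, one gets $Z(\cP_Q,x,q)=\prod_{\bd}(\cdots)$ matching the DT factorisation \eqref{DT1}, so $\ch(W_\bd)$ is (up to a sign and a factor $(1-q)\inv$ absorbed by the $\bQ[\dd]$-freeness) the DT invariant $\Om_\bd$, which is a finite Laurent polynomial in $q^{\pm\oh}$ — hence $W_\bd$ finite-dimensional. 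For the parity statement: $\P_{Q,\bd}^k$ is nonzero only for $k\equiv\hi(\bd,\bd)\pmod 2$, because every monomial $i_1(n_1)\cdots i_p(n_p)\one$ with $\sum e_{i_k}=\bd$ has weight $\sum(\oh\hi(e_{i_k},e_{i_k})-n_k-1)$, so cohomological degree $-\sum\hi(e_{i_k},e_{i_k})+2\sum(n_k+1)\equiv\sum\hi(e_{i_k},e_{i_k})\equiv\hi(\bd,\bd)\pmod 2$ since $\hi(e_i,e_j)\equiv\hi(e_j,e_i)$ makes the off-diagonal terms cancel mod $2$. Since $\dd$ shifts degree by $-2$, the same parity passes to $W_\bd^k\subset(C/\dd C)_\bd$, giving $k\equiv\hi(\bd,\bd)\pmod 2$. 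The main obstacle is the careful bookkeeping of the two gradings (half-integer weight versus integer cohomological degree) and making the injectivity of $\dd$ — hence the $\bQ[\dd]$-freeness — airtight; the rest is assembling the cited structural results.
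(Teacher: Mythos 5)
Your first paragraph (vertex bialgebra structure, identification of primitive elements, $\cH_Q\dual\iso\cU(C)$) and the parity computation at the end both match the paper's argument and are correct. However, the middle of your proof has two genuine gaps, precisely at the two most delicate points of this theorem.

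First, the $\bQ[\dd]$-freeness. Your injectivity argument is not sound: if $\ba(z)\in C\subset\fL\pser{z^{\pm1}}$ satisfies $\dd_z\ba=0$, this only says $\ba(z)=a(-1)z^0$ with $a(n)=0$ for $n\ne -1$; since the map $a\mto a(-1)$ is an isomorphism $C\isoto\fL_-$, there is no a priori reason why $a(-1)$ must vanish, so no contradiction with the weight grading is produced. Moreover, even granting injectivity, your ``increasing filtration $\fL(0)\subset\fL(-1)\subset\cdots$'' does not exist: by the structure recalled in \S\ref{sec:coeff algebra}, $\fL(0)$ lies in $\fL_+$ while $\fL(-1)=\fL_-$, and the embedding $\fL(n-1)\emb\fL(n)$ holds only for $n\ne0$. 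What the paper uses here, and what your argument is missing, is the second derivation $t\colon i(n)\mto -i(n+1)$ of $\fL$. It preserves the relations of $\fL$ and the subalgebra $\fL_+$, hence acts on $\fL_-\iso\fL/\fL_+$, and satisfies $[\dd,t]=\n\bd\cdot\Id$ on each graded piece $(\fL_-)_\bd$. This makes every $(\fL_-)_\bd$ a $\bZ$-graded module over the Weyl algebra $A_1=\bQ[t,\dd]$, and Lemma \ref{weyl rep} (whose proof uses the $A_1$-structure to establish injectivity of $\dd$ via the nonexistence of nonzero finite-dimensional $A_1$-modules, and uses bounded-below weights for the spanning induction) gives $\fL_-\iso W\ts\bQ[\dd]$. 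Without this, your freeness claim is unsupported.

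Second, the finite-dimensionality of $W_\bd$. You assert that $\Om_\bd$ is ``a finite Laurent polynomial in $q^{\pm\oh}$,'' but this is exactly (the finiteness part of) what is being proved; the paper defines $\Om_\bd$ only through the $\Exp$-factorisation \eqref{DT1}, which a priori gives a power series in $q^{\oh}$. Your earlier line ``the surjection $C_\bd\onto W_\bd$ from finitely many weights has finite-dimensional image'' also does not hold: $C_\bd$ has infinitely many nonzero weight components, and the quotient $W_\bd=C_\bd/\dd C_\bd$ could in principle still spread over infinitely many weights. The paper closes this gap by an independent argument: from $\fL_-\iso W\ts\bQ[\dd]$ it extracts $\ch(W_\bd)=\Om_\bd(q\inv)\in\bN\lser{q^\oh}$, then uses the finite-dimensionality of the \COHA-module components $\M_{\bw,\bd}$ (cohomology of an algebraic variety), together with the module character formula \eqref{DT2}, to deduce that for a suitable $\bw$ the product $\frac{q^{\bw\cdot\bd}-1}{q-1}\ch(W_\bd)$ lies in $\bQ[q^{\pm\oh}]$, forcing $\ch(W_\bd)\in\bN[q^{\pm\oh}]$. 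That step is essential and not replaceable by an appeal to properties of $\Om_\bd$ that have not yet been established.
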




\begin{proof}
We have $\cH_Q\dual\iso\cP_Q\iso\cU(C)$ for the vertex Lie algebra $C$, see \S\ref{sec:three}.
Therefore $\cH_Q\dual$ has a structure of a cocommutative connected vertex bialgebra and $P(\cH_Q\dual)\iso C$ by Propositions \ref{li-coprod} and \ref{li-coprod2}. 
We can identify $C$ with $\fL_-$ and
we will show that $\fL_-$ satisfies the required properties.
Theorem \ref{th:VOA-CoHA} implies that we have an isomorphism of coalgebras
$$\cH_Q\dual\iso\P_Q\iso U(\fL_-).$$

Let us define another derivation of $\fL$, which we shall denote $t$; it acts on the generators of $\fL$ by the formula $t(i(n))=-i(n+1)$. By direct inspection, $t$ preserves all relations of $\fL$, and thus acts on this algebra.
Moreover, this derivation preserves the subalgebra $\fL_+$, and so acts on the vector space $\fL/\fL_+\iso \fL_-$.
We note that $[\dd,t]=\Id$ on the space of generators; it follows that $[\dd,t]=\n\bd\cdot\Id$ on each graded component $(\fL_-)_\bd$.
Thus, each such component is a $\bZ$-graded module over the Weyl algebra $A_1=\bQ[t,\dd]$ of polynomial differential operators on the line (on which $\dd$ acts by an endomorphism of weight $1$ and
$t$ acts by an endomorphism of weight $-1$).
Since we can embed $(\fL_-)_\bd\sbs\P_{0,\bd}\sbs V_\bd$ (the Fock space), weights of $(\fL_-)_\bd$ are bounded from below. Therefore by Lemma \ref{weyl rep} we obtain $\fL_-\iso W\ts\bQ[\dd]$, for $W\iso \fL_-/\dd\fL_-$.

The Fock space $V_\bd$ has finite-dimensional weight components and its weights are bounded below.
Moreover, each vector of $V_\bd$ is obtained from $\ket\bd$ by action of elements of integer weights, hence all weights of $V_\bd$ are of the form
$\oh\hi(\bd,\bd)+n$, for some $n\in\bZ$.
The corresponding degree is 
congruent to
$\hi(\bd,\bd)\pmod2$.
The same applies to $W_\bd\sbs V_\bd$ and we conclude that $\ch(W_\bd)\in\bN\lser{q^\oh}$.
It remains to establish that $W_\bd$ is finite-dimensional
and for this we will show that $\ch(W_\bd)\in\bN[q^{\pm\oh}]$.

Since $\cH_Q\dual\iso U(\cL_-)$ and $\fL_-\iso W\ts\bQ[\dd]$, we obtain
$$Z(\cH_Q,x,q\inv)=\Exp\rbr{Z(\fL_-,x,q)}
=\Exp\rbr{\frac{Z(W,x,q)}{1-q}}.
$$
Applying formula \eqref{DT1} for DT invariants, we obtain
$$Z(W,x,q)= \sum_\bd(-1)^{\hi(\bd,\bd)}\Om_\bd(q\inv)x^\bd$$
meaning that $\ch(W_\bd)=\Om_\bd(q\inv)$. Let us now use characters of \COHA-modules. 
According to \eqref{DT2},
for any $\bw\in\bN^I$, we have
$$Z(\M_{\bw},x,q\inv)=
\Exp\rbr{\sum_\bd\frac{q^{\bw\cdot \bd}-1}{q-1}(-1)^{\hi(\bd,\bd)}\Om_\bd(q\inv)x^\bd}.
$$
All components $\M_{\bw,\bd}$ of the \COHA module $\M_\bw$ are finite-dimensional (as the cohomology of an algebraic variety), 
hence $Z(\M_{\bw},x,q\inv)\in\bZ[q^{\pm\oh}]\pser{x_i\col i\in I}$.
This implies that for all $\bd$
$$
\frac{q^{\bw\cdot \bd}-1}{q-1}\ch(W_\bd)
=\frac{q^{\bw\cdot \bd}-1}{q-1}\Om_\bd(q\inv)$$
is an element of $\bQ[q^{\pm\oh}]$ (actually $\bZ[q^{\pm\oh}]$, but we don't need this). 
We can choose $\bw\in\bN^I$ such that $\bw\cdot\bd>0$. We have just shown that the product of the polynomial $\frac{q^{\bw\cdot \bd}-1}{q-1}\in \bN[q]$ and the series $\ch(W_\bd)\in\bN\lser{q^{\oh}}$ 
is a Laurent polynomial. This implies that $\ch(W_\bd)\in\bN[q^{\pm\oh}]$, as required.
\end{proof}

\begin{lemma}\label{weyl rep}
Let $A_1=\bQ[t,\dd]$ be the Weyl algebra (with $\dd t-t\dd =1$) equipped with the weight grading $\wt(\dd)=1$, $\wt(t)=-1$, and let $M=\bop_{n\in\bZ}M_n$ be a bounded below graded $A_1$-module.
Then $M$ is a free module over $\bQ[\dd]$.
More precisely, $\dd\colon M\to M$ is injective and we have $M=\bop_{n\ge0}\dd^n(V)$, for any graded subspace $V\sbs M$ such that $M=V\oplus\im(\dd)$.
\end{lemma}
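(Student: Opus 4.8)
The plan is to treat the two assertions of the lemma in turn: injectivity of $\dd$, and the decomposition $M=\bop_{n\ge0}\dd^n(V)$ for a given graded subspace $V$ with $M=V\oplus\im(\dd)$; freeness over $\bQ[\dd]$ will then be immediate. For injectivity, note that $\ker\dd$ is a graded subspace ($\dd$ being homogeneous), so it is enough to rule out a homogeneous nonzero $m$ with $\dd m=0$. Here I would use boundedness below: since $\wt(t^km)=\wt(m)-k\to-\infty$, there is a least $k_0\ge1$ with $t^{k_0}m=0$. Applying the Weyl-algebra identity $\dd t^{k_0}=t^{k_0}\dd+k_0t^{k_0-1}$ to $m$ gives $0=\dd(t^{k_0}m)=k_0t^{k_0-1}m$, hence $t^{k_0-1}m=0$, contradicting the minimality of $k_0$ (for $k_0=1$ this reads $m=0$). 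So $\dd$, and hence every power $\dd^n$, is injective.

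Next I would show that $\sum_{n\ge0}\dd^n(V)$ is direct and equals $M$. Directness: in a nontrivial relation $\sum_n\dd^nv_n=0$ with $v_n\in V$, taking $v_j$ to be the lowest nonvanishing term gives $\dd^jv_j\in\im(\dd^{j+1})$, so injectivity of $\dd^j$ forces $v_j\in\im(\dd)$, contradicting $V\cap\im(\dd)=0$. Surjectivity: I would induct on weight. At the lowest weight $w_0$ with $M_{w_0}\ne0$ one has $\im(\dd)_{w_0}=\dd(M_{w_0-1})=0$, so $M_{w_0}=V_{w_0}$; and if every $M_{w'}$ with $w'<w$ is contained in $W:=\sum_{n\ge0}\dd^n(V)$, then for $m\in M_w$ writing $m=v+\dd m'$ with $v\in V_w$ and $m'\in M_{w-1}$ puts $m$ into $W$, since $W$ is visibly $\dd$-stable and $m'\in W$ by induction. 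Thus $M=W=\bop_{n\ge0}\dd^n(V)$.

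Finally, choosing a homogeneous basis of $V$, this decomposition together with the injectivity of each $\dd^n|_V$ shows that its images under $\dd^n$, $n\ge0$, form a $\bQ$-basis of $M$; this is precisely the statement that $M$ is free over $\bQ[\dd]$ on any basis of $V$. I do not expect a genuine obstacle here: apart from the one-line injectivity argument, this is elementary linear algebra with gradings. The only load-bearing hypothesis is boundedness below, used exactly once to terminate the injectivity argument — and it is really needed, since on the $A_1$-module $\bQ[t]$ with $\dd$ acting as $\pd t$ (bounded above but not below) one has $\dd\cdot1=0$.
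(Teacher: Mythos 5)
Your proof is correct and follows the same overall structure as the paper's: prove injectivity of $\dd$, then establish directness and surjectivity of $\sum_{n\ge0}\dd^n(V)$ by a lowest-term argument and an induction on weight respectively. The one place where you diverge is the injectivity step: the paper observes that the $\bQ$-span of $\{t^n v : n\ge0\}$ is a finite-dimensional $A_1$-submodule (finite by degree reasons and boundedness below) and then invokes the classical fact that $A_1$ has no nonzero finite-dimensional modules; you instead unpack that fact, applying the commutator identity $\dd t^{k_0}-t^{k_0}\dd=k_0t^{k_0-1}$ directly to a minimal $k_0$ with $t^{k_0}m=0$. The two arguments rest on exactly the same observation, but yours is self-contained and avoids citing the external fact (which, after all, is proved by the very same commutator computation, or by the trace argument). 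The rest of your proof — directness via the lowest nonvanishing $v_j$ combined with injectivity of $\dd^j$, surjectivity by induction on weight using $M=V\oplus\im(\dd)$, and freeness deduced from a homogeneous basis of $V$ — matches the paper's reasoning essentially verbatim.
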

\begin{proof}
Assume that $\dd v=0$ for some (homogeneous) $v\in M$.
Then the subspace $\angs{t^nv}{n\ge0}$ is an $A_1$- submodule.
It is finite-dimensional as $t^nv=0$, for $n\gg0$, by degree reasons.
But there are no nonzero finite-dimensional $A_1$-modules, hence we conclude that $v=0$.

Let $V\sbs M$ be a graded subspace such that $M=V\oplus\im(\dd)$.
If the sum $\sum_{n\ge0}\dd^n(V)$ is not direct, we can find $v_k\in V$ such that $\sum_{k=0}^n\dd^k(v_k)=0$ and $v_n\ne 0$.
As $\dd$ is injective, we can assume that $v_0\ne0$.
But $v_0\in V\cap\im(\dd)=0$, which is a contradiction.
To show that $x\in M_k$ is contained in $\sum_{n\ge0}\dd^n(V)$, we proceed by induction on $k$.
We can decompose $x=v+\dd(y)$, where $v\in V_k$ and $y\in M_{k-1}$.
By induction $y\in \sum_{n\ge0}\dd^n(V)$, hence $\dd(y)\in \sum_{n\ge1}\dd^n(V)$.
We conclude that $x=v+\dd(y)\in \sum_{n\ge0}\dd^n(V)$.
\end{proof}


\begin{remark}
In the proof of \cite[Theorem 1.1]{MR2956038}, the free action of the polynomial ring in one variable on $\cH_{Q,\bd}$ that is used to find the space of free generators of $\cH_Q$ is implemented using the multiplication by 
 \[
\sigma_\bd=\sum_{j\in I, 1\le k\le \bd_j} x_{j,k}.
 \]
This can also be obtained as a byproduct of our argument.
Recalling the definition of the endomorphism $t$ from the proof of Theorem \ref{th:DT}, we see that this multiplication is precisely the action of the endomorphism $t\dual$ on $\P_Q\dual$.
In that theorem, we saw that $\P_Q$ is isomorphic to $U(\fL_-)\iso S^c(W\otimes\bQ[\dd])$ as a coalgebra; after taking graded duals (which can be implemented by doing the Fourier transform for differential operators), the modules become $\bQ[t\dual]$-free. 
\end{remark}

Let us record a simple consequence of Theorem \ref{th:DT}. 

\begin{corollary}\label{cor:DT}
For any symmetric quiver $Q$, consider the graded vertex bialgebra $\cH_Q\dual$ and its space of primitive elements $C=P(\cH_Q\dual)$, which is a vertex Lie algebra.
Then the corresponding Donaldson--Thomas invariants satisfy
$$\Om_\bd(q\inv)=\ch(C_\bd/\dd C_\bd)\in\bN[q^{\pm\oh}].$$
\end{corollary}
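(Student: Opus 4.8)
The plan is to read off the statement from the proof of Theorem~\ref{th:DT}, where all the substance already appears; what remains is to match up the notation. First I would recall the identifications produced in that proof: the vertex Lie algebra $C=P(\cH_Q\dual)$ is identified with $\fL_-$, the canonical derivation $\dd$ on $C$ corresponds to the derivation on $\fL_-$ restricted from $\fL$, and $\fL_-$ is free over $\bQ[\dd]$, say $\fL_-\iso W\ts\bQ[\dd]$ with $W\iso\fL_-/\dd\fL_-$ having finite-dimensional graded components (Lemma~\ref{weyl rep} together with the embedding $(\fL_-)_\bd\sbs V_\bd$ into a Fock space). Under these identifications $C_\bd/\dd C_\bd\iso W_\bd$, so the right-hand side of the corollary is $\ch(W_\bd)$, and it remains to identify this with $\Om_\bd(q\inv)$ and to record finiteness and positivity.

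For the numerical identity I would use the graded-vector-space isomorphism $\cH_Q\dual\iso U(\fL_-)$ (from Theorem~\ref{th:VOA-CoHA} and \S\ref{sec:three}), together with the PBW coalgebra isomorphism $U(\fL_-)\iso S^c(\fL_-)$, whose underlying object over $\bQ$ is that of $S(\fL_-)$. Since $Z$ is a $\la$-ring homomorphism and $\fL_-$ is concentrated in $L$-degrees in $\bN^I\ms\set0$, the plethystic exponential formula $Z(S(M))=\Exp(Z(M))$ gives
$$Z(\cH_Q,x,q\inv)=\Exp\bigl(Z(\fL_-,x,q)\bigr)=\Exp\!\left(\frac{Z(W,x,q)}{1-q}\right),$$
using $\fL_-\iso W\ts\bQ[\dd]$ and $Z(\bQ[\dd])=\tfrac1{1-q}$ (the variable $\dd$ has $L$-degree zero and cohomological degree $-2$); this is precisely the computation carried out inside the proof of Theorem~\ref{th:DT}. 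Comparing with the defining formula~\eqref{DT1} for the DT invariants and invoking injectivity of $\Exp$, I would obtain $Z(W,x,q)=\sum_\bd(-1)^{\hi(\bd,\bd)}\Om_\bd(q\inv)x^\bd$. Since the Poincar\'e series~\eqref{part function} of $W$ carries the sign $(-1)^{(\bd,\bd)}=(-1)^{\hi(\bd,\bd)}$ in $L$-degree $\bd$, comparing coefficients of $x^\bd$ yields $\ch(W_\bd)=\Om_\bd(q\inv)$, hence $\ch(C_\bd/\dd C_\bd)=\Om_\bd(q\inv)$.

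Finally, Theorem~\ref{th:DT} asserts that each $W_\bd$ is finite-dimensional and supported in cohomological degrees $k\equiv\hi(\bd,\bd)\pmod2$, so $\ch(W_\bd)$ is a Laurent polynomial in $q^{\pm\oh}$ with non-negative coefficients, that is $\Om_\bd(q\inv)=\ch(C_\bd/\dd C_\bd)\in\bN[q^{\pm\oh}]$. There is no real obstacle here beyond bookkeeping; the only point deserving a moment's care is the sign matching between the $(-1)^{(\bd,\bd)}$ built into the definition of $Z$ and the $(-1)^{\hi(\bd,\bd)}$ appearing in~\eqref{DT1}, which is automatic precisely because the bilinear form $(\cdot,\cdot)$ on $L$ is the Euler form $\hi$.
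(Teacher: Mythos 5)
Your proof is correct and follows essentially the same route as the paper's: identify $C$ with $\fL_-$ compatibly with the derivation, note $C/\dd C\iso\fL_-/\dd\fL_-\iso W$ with $\fL_-\iso W\ts\bQ[\dd]$, and read off $\Om_\bd(q\inv)=\ch(W_\bd)$ together with finiteness and positivity from what was established in the proof of Theorem~\ref{th:DT}. The only difference is cosmetic: the paper simply cites the identity $\Om_\bd(q\inv)=\ch(W_\bd)$ from that proof, while you re-derive it via the PBW/$\la$-ring computation, which is the same argument spelled out in slightly more detail.
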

\begin{proof}
We established that $\cH_Q\dual\iso\P_Q$ and that the principal free vertex algebra $\cP_Q$ is isomorphic to universal enveloping vertex algebra of a \LieCon algebra $C$.
We have seen in the proof of Theorem ~\ref{th:DT} that $\Om_\bd(q\inv)=\ch(W_\bd)$,
where $\fL_-=W\ts\bQ[\dd]$, so that $W\iso\fL_-/\dd\fL_-$.
Recall that for any \LieCon algebra $C$ its coefficient algebra $\fL$ has a decomposition $\fL=\fL_-\oplus\fL_+$ such that $C\iso\fL_-$ (as a graded vector space).
Moreover, we have
\begin{equation}
\fL/\dd\fL\iso\fL_-/\dd\fL_-\iso C/\dd C,
\end{equation}
which completes the proof.
\end{proof}

This statement is easily generalisable to the following appealing conjecture suggesting a relationship between vertex algebras and more general DT invariants.

\begin{conjecture}
For each \coha \cH associated to a (symmetric) quiver with potential, its dual $\cH\dual$ can be equipped with a vertex bialgebra algebra structure
(\cf \cite{joyce_ringel})
such that the corresponding perverse graded object is
isomorphic to the universal enveloping vertex algebra of some \LieCon algebra $C$.
The corresponding DT invariants are equal to the characters of the components of $C/\dd C$.
\end{conjecture}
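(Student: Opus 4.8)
The plan is to replay the chain of arguments of \S\ref{coha and free}--\S\ref{efimov thm} with the critical (vanishing--cycle) cohomological Hall algebra $\cH_{Q,W}$ of a symmetric quiver with potential in place of $\cH_Q$, and with cohomological integrality for quivers with potential \cite{davison_cohomological} in place of Efimov's positivity theorem \cite[Theorem 1.1]{MR2956038}. One case is essentially free: whenever the potential admits dimensional reduction, $\cH_{Q,W}$ is computed by the shifted equivariant Borel--Moore homology of the representation stacks of an auxiliary quiver, the perverse filtration is trivial, and Theorem \ref{th:DT} applies word for word; the content of the conjecture lies beyond this. In general, the first task is to put a vertex bialgebra structure on $\cH_{Q,W}\dual$. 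The natural candidate is the vertex algebra structure on the Borel--Moore homology of the moduli stack $\mathfrak{M}_{Q,W}$ of \cite{joyce_ringel}; one would show, generalizing Theorem \ref{th:VOA-CoHA}, that the coproduct dual to the CoHA multiplication is a morphism of vertex algebras, so that $\cH_{Q,W}\dual$ becomes a vertex bialgebra with coproduct dual to the CoHA product. (The localized cohomological Green coproduct of \cite{davison_critical} is \emph{a priori} a weaker, different statement, so it cannot be used directly.) For symmetric $(Q,W)$ the CoHA is supercommutative once the sign twist of Remark \ref{rm:super} is applied, and our $\hi$-braiding absorbs that twist, so $\cH_{Q,W}\dual$ is cocommutative; being supported in $\bN^{Q_0}$-degrees with one-dimensional degree-zero part, it is also connected. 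The Milnor--Moore theorem for vertex bialgebras \cite[Theorem 4.13]{han_cocommutative} then yields $\cH_{Q,W}\dual\iso\cU(C)$ with $C=P(\cH_{Q,W}\dual)$ a vertex Lie algebra.

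The real work, exactly as in the proof of Theorem \ref{th:DT}, is to analyse $C\iso\fL_-$: one wants it to be a free $\bQ[\dd]$-module with $C/\dd C$ finite-dimensional in each degree and of parity $\equiv\hi(\bd,\bd)\pmod2$. Mimicking that proof, I would produce a second derivation $t$ on the coefficient Lie algebra $\fL$ with $[\dd,t]=\n\bd\cdot\Id$ on each $\bd$-graded component, making $(\fL_-)_\bd$ a bounded-below graded module over the Weyl algebra $A_1=\bQ[t,\dd]$, and deduce $\bQ[\dd]$-freeness from Lemma \ref{weyl rep}. Geometrically, for $W=0$ the derivation $t$ is dual to multiplication by the tautological class $\sigma_\bd=\sum_{j,k} x_{j,k}$; with a potential the analogous operator is cup product with the total tautological first Chern class, which still acts on critical cohomology, so such a $t$ should exist. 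Purity of BPS cohomology from cohomological integrality \cite{davison_cohomological} should force the weights of $(\fL_-)_\bd$ to be bounded below and congruent to $\hi(\bd,\bd)\pmod2$, hence $\ch((C/\dd C)_\bd)\in\bN\lser{q^\oh}$; finiteness, and therefore membership in $\bN[q^{\pm\oh}]$, would follow by pairing $\cH_{Q,W}$ against the modules built from moduli of framed stable $(Q,W)$-representations and using the factorization of their Poincar\'e series, just as in the endgame of the proof of Theorem \ref{th:DT} --- but this step presupposes that the framed/module theory and the relevant Poincar\'e series formula have been set up for $(Q,W)$ in the first place.

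The main obstacle, and the reason this is only a conjecture, is that once $W\ne0$ the algebra $\cH_{Q,W}$ is no longer a shuffle algebra and has no lattice/Fock-space model, so both the explicit realization underlying Proposition \ref{prop:sympoly} and the transparent bounds on locality orders disappear. In particular it is not clear that $\cH_{Q,W}\dual$ is generated in weight $1$ with bounded locality, \ie that it is a free vertex algebra of non-negative locality --- and in general it should not be, which is exactly why the conjecture speaks of the associated graded object of the perverse filtration rather than of $\cH_{Q,W}$ itself. Arranging the passage to the perverse-graded object so that it is compatible at once with the vertex product, the coproduct, and the $A_1$-action of $\dd$ and $t$ is the crux: cohomological integrality makes it very plausible, but controlling the monodromy and the choice of square root of the Tate twist in vanishing-cycle cohomology is where the technical heart of any actual proof would lie.
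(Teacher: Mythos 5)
The statement you have been asked to ``prove'' is a conjecture, and the paper offers no proof of it; it is advanced explicitly as a direction for future work. Your text is honest about this and is a roadmap rather than a proof, and it is broadly consistent with what the authors evidently have in mind: replay the $W=0$ argument (Theorem~\ref{th:DT}) with the critical CoHA, Joyce's vertex algebra construction, and cohomological integrality in place of the lattice realization and Efimov's theorem. You also correctly identify the technical crux --- the absence of a Fock/shuffle model when $W\neq 0$, and the need to pass to the perverse associated graded in a way compatible with both the vertex product and the $A_1$-action of $\dd$ and $t$.

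One point in your sketch is out of order and, as written, incorrect. You claim that for symmetric $(Q,W)$ the CoHA is supercommutative once the sign twist of Remark~\ref{rm:super} is applied, and deduce that $\cH_{Q,W}\dual$ is cocommutative so that Milnor--Moore applies directly to it. For $W\ne 0$ the critical CoHA is in general not (super)commutative; it is only the associated graded for the perverse filtration that is expected (and, by Davison's work, known in relevant cases) to be supercommutative. This is exactly why the conjecture is phrased in terms of ``the corresponding perverse graded object,'' and it means that the passage to the perverse graded object must happen \emph{before} you invoke cocommutativity and the vertex Milnor--Moore theorem, not afterwards as a refinement. You do acknowledge the role of the perverse filtration in your last paragraph, so the underlying understanding is there --- but the logical order in the second paragraph would, if taken literally, assert a false statement about $\cH_{Q,W}$ itself.
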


\defcite\joy{joyce_ringel}

\subsection{Relationship to the vertex algebras of Joyce}
In this section we will briefly explain the relationship of our results to the geometric construction of vertex algebras proposed in \joy (see also \cite{gross_homology,gross_universal,bojko_wall,latyntsev_cohomological}).
In order to do this, we will need to formulate a minor generalization of that construction.
Let us assume that we have the following data
\begin{enumerate}
\item 
A lattice $L$ equipped with a symmetric bilinear form $\hi$.

\item An abelian category $\cA$
and a linear map $\cl:K_0(\cA)\to L$.

\item A moduli stack $\cM$ of object in $\cA$ such that the substack $\cM_\bd$ of objects $E\in\cM$ with $\cl(E)=\bd$ is open and closed and there exist natural morphisms of stacks
\begin{enumerate}
\item
$\Phi:\cM\xx\cM\to\cM$ that maps $(E,F)\mto E\oplus F$.

\item 
$\Psi:B\Gm\xx\cM\to\cM$ that maps $\Gm\xx\Aut (E)\ni(t,f)\mto tf\in\Aut(E)$, for $E\in\cA$.
\end{enumerate}

\item
A perfect complex $\Te$ on $\cM\xx\cM$ such that
\begin{enumerate}
\item
\Te is weakly-symmetric, meaning that
$$[\si^*\Te\dual]=[\Te]$$
in the Grothendieck group of $\cM\xx\cM$,
where $\si:\cM\xx\cM\to\cM\xx\cM$ is the permutation of factors.
\item The restriction $\Te_{\bd,\be}=\Te|_{\cM_\bd\xx\cM_\be}$ has constant rank $\hi(\bd,\be)$ for all $\bd,\be\in L$.
\item
We have
$$(\Phi\xx\id_\cM)^*\Te\iso
\pi_{13}^*\Te\oplus\pi_{23}^*\Te,\qquad
(\id_\cM\xx\Phi)^*\Te\iso\pi_{12}^*\Te\oplus\pi_{13}^*\Te,
$$
$$(\Psi\xx\id_\cM)^*\Te\iso\cU\boxtimes\Te,\qquad
(\id_\cM\xx\Psi)^*\Te\iso\cU\dual\boxtimes\Te,
$$
where $\pi_{ij}:\cM^3\to\cM^2$ is the projection to the corresponding factors and 
$\cU$ is the universal line bundle over $B\Gm$.
\end{enumerate}
\end{enumerate}

The proof of the following result goes through the same lines as in \joy.

\begin{theorem}\label{th:joyce}
Consider the $L\xx\bZ$-graded vector space
$$V=\bop_{\bd\in L} V_\bd
=\bop_{\bd\in L} H_*(\cM_\bd)[\hi(\bd,\bd)]$$
so that the component of $V_\bd$ of homological degree $k$ (and weight $\oh k$) is $H_{k-\hi(\bd,\bd)}(\cM_\bd)$.
Then $V$ has a structure of a graded vertex algebra (in the symmetric monoidal category $\Vect^{L\xx\bZ}$ with the braiding induced by $\hi$) defined by
\begin{enumerate}
\item $\one=\eta_*(1)\in H_0(\cM_0)$, where $\eta:\pt\to\cM$
is the inclusion of the zero object.
\item
The operator $T:V\to V$ of homological degree $2$ (and weight $1$)
is defined by $T(v)=\Psi_*(t\boxtimes v)$, where $t\in H_2(B\Gm)=H_2(\bP^\infty)$ is the canonical generator. 

\item For $u\in H_k(\cM_\bd)$, $v\in H_*(\cM_\be)$, we define
$$Y(u,z)v=
(-1)^{k\hi(\bd,\bd)}z^{\hi(\bd,\be)}
\Phi_*\rbr{(e^{zT}\otimes\id)\rbr{(u\boxtimes v)\cap c_{z\inv}(\Te_{\bd,\be})}}.$$
\end{enumerate}
\end{theorem}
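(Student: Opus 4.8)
The plan is to verify directly the five defining axioms of an $L$-graded vertex algebra from \S\ref{graded VA} for the triple $(V,Y,\one)$, following the strategy of \joy. The one genuinely new feature is that our symmetric monoidal structure on $\Vect^{L\xx\bZ}$ uses the braiding induced by $\hi$ rather than the Koszul sign rule; the sign $(-1)^{k\hi(\bd,\bd)}$ built into the formula for $Y$ is precisely the twist needed to make the construction of \joy compatible with this braiding, and everything else is parallel. As a preliminary remark, although $V$ need not be bounded below in weight as a whole (the shifts $[\hi(\bd,\bd)]$ can be arbitrarily negative), each component $V_\bd$ is, and since $u(n)v$ always lies in the single component $V_{\bd+\be}$ and has weight $\wt(u)+\wt(v)-n-1$, it vanishes for $n\gg0$; thus $Y(u,z)$ is a well-defined field and the whole construction goes through verbatim.

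First I would dispose of the grading, vacuum, and creation axioms. Degree preservation on the $L$-grading is clear because $\Phi$ adds dimension vectors. For the weight, one uses that $\Te_{\bd,\be}$ has constant rank $\hi(\bd,\be)$, so that the prefactor $z^{\hi(\bd,\be)}$ exactly offsets the weight shift produced by capping with $c_{z\inv}(\Te_{\bd,\be})$; combined with the count for $e^{zT}$ (where $T$ has weight $1$) and the three shifts $[\hi(\bd,\bd)]$, $[\hi(\be,\be)]$, $[\hi(\bd+\be,\bd+\be)]$, this shows that $Y(u,z)$ carries the correct weight. Next, the vacuum and creation axioms reduce to the behaviour of the data over the zero component: the restriction of $\Phi$ to $\cM_0\xx\cM_\be$ and to $\cM_\bd\xx\cM_0$ is the canonical identification, the complexes $\Te_{0,\be}$ and $\Te_{\bd,0}$ are trivial (they have rank zero, and trivial Chern classes, as in \joy, by the compatibilities of $\Te$ with $\Psi$), and $\Psi$ restricted to $B\Gm\xx\cM_0$ factors through $\cM_0$, so $T\one=\Psi_*(t\boxtimes\one)=0$ since $t$ has positive degree. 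Hence $c_{z\inv}(\Te_{0,\be})=1$, $e^{zT}\one=\one$, $Y(\one,z)v=\Phi_*(\one\boxtimes v)=v$, and $Y(u,z)\one=e^{zT}u\in V\pser z$, which gives $u(n)\one=0$ for $n\ge0$ and $u(-1)\one=u$; here the sign $(-1)^{k\hi(\bd,\bd)}$ and the grading-shift conventions are arranged so that the last equality holds on the nose.

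The translation covariance axiom $[T,Y(u,z)]=\dd_z Y(u,z)$ is where the compatibility of $\Te$ with $\Psi$ comes in. Writing $T$ as the cap product with the generator $t\in H_2(B\Gm)$ followed by $\Psi_*$, I would compute $T\,Y(u,z)v$ and $Y(u,z)(Tv)$ by base change along $(\Psi\xx\id_\cM)^*\Te\iso\cU\boxtimes\Te$ and $(\id_\cM\xx\Psi)^*\Te\iso\cU\dual\boxtimes\Te$: the first isomorphism makes the $t$-insertion on the $u$-leg act, after pushing down, as $\dd_z$ applied to the factor $z^{\hi(\bd,\be)}c_{z\inv}(\Te_{\bd,\be})$ and to $e^{zT}$, while the second shows that the $t$-insertion on the $v$-leg is cancelled by the $\cU\dual$-contribution; keeping track of the sign $(-1)^{k\hi(\bd,\bd)}$ one recovers the claimed identity. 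That $T$ itself has weight $1$ is immediate.

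The last axiom, mutual locality of $Y(u,z)$ and $Y(v,w)$, is the most delicate and the step I expect to be the main obstacle. Here I would iterate the definition and rewrite both $Y(u,z)Y(v,w)c$ and $Y(v,w)Y(u,z)c$, for an arbitrary homogeneous $c\in H_*(\cM_{\bd'})$, as a single push-forward along the threefold sum map $\cM_\bd\xx\cM_\be\xx\cM_{\bd'}\to\cM$, using the compatibilities $(\Phi\xx\id_\cM)^*\Te\iso\pi_{13}^*\Te\oplus\pi_{23}^*\Te$ and $(\id_\cM\xx\Phi)^*\Te\iso\pi_{12}^*\Te\oplus\pi_{13}^*\Te$ to collect all Chern-class insertions into one rational expression in $z$ and $w$. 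Comparing the two resulting formulas, they agree except for two things: first, the order in which the $\bd$- and $\be$-legs are merged, which produces exactly the braiding sign $(-1)^{\hi(\bd,\be)}$ and forces one to replace $c_{z\inv}(\Te_{\bd,\be})$ by the corresponding expansion of the Chern-class factor attached to $(\be,\bd)$, which is precisely where the weak-symmetry hypothesis $[\si^*\Te\dual]=[\Te]$ enters; and second, the region in which a factor of the shape $(z-w)^{\hi(\bd,\be)}$, times a Laurent polynomial in $(z-w)\inv$ coming from the Chern classes, is expanded, namely $\bQ\lser z\lser w$ versus $\bQ\lser w\lser z$. The difference of the two expansions of such a factor is a finite sum of derivatives of the formal delta function supported on $z=w$, so multiplying by $(z-w)^N$ for $N$ large enough (depending on $\hi(\bd,\be)$ and on the range of nonzero Chern classes of $\Te_{\bd,\be}$) annihilates it, which is exactly mutual locality. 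Matching the two iterated expressions term by term, tracking every Koszul sign coming from homology cross products against the $\hi$-braiding signs, is the technical heart of the argument; since it is entirely parallel to the corresponding computation in \joy, I would present it by reference rather than reproduce it in full.
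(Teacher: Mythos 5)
Your proposal is correct and takes essentially the same approach as the paper, which gives no proof beyond the one-line assertion that the argument ``goes through the same lines as in \joy''. Your axiom-by-axiom sketch is a faithful expansion of that assertion, correctly locating where the weak-symmetry hypothesis, the $\hi$-braiding signs, and the prefactor $(-1)^{k\hi(\bd,\bd)}$ enter the Joyce-style computation.
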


As before, let $Q$ be a symmetric quiver with the set of vertices $I$.
Let $L=\bZ^I$ and $\hi$ be the Euler form of $Q$.
Let $\cA$ be the category of representations of $Q$ and $\cM$ be the stack of representations of $Q$.
We have a linear map $\udim:K_0(\cA)\to L$.
We define the perfect complex $\Te=\RHom$ over $\cM\xx\cM$ such that its fiber over $(M,N)\in\cM\xx\cM$ is isomorphic to $\RHom(M,N)\in D^b(\Vect)$.
The rank of $\Te$ over $\cM_\bd\xx\cM_\be$ is equal to $\hi(\bd,\be)$.

\begin{lemma}
We have
$$[\si^*\Te\dual]=[\Te]$$
in the Grothendieck group of $\cM\xx\cM$.
\end{lemma}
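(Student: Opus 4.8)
The plan is to reduce the statement to an identity in the Grothendieck group $K_0(\cM\xx\cM)$, where it becomes a bookkeeping exercise with universal bundles. Let $\cV_i$ (resp.\ $\cW_i$), for $i\in I$, denote the universal bundle at the vertex $i$ pulled back from the first (resp.\ second) factor of $\cM\xx\cM$. For representations of a quiver the derived $\RHom$ is computed by the standard two-term complex, so globally $\Te=\RHom$ is quasi-isomorphic to
$$\bop_{i\in I}\cV_i\dual\ts\cW_i\ \xto{\ d\ }\ \bop_{(a\colon i\to j)\in Q_1}\cV_i\dual\ts\cW_j$$
placed in cohomological degrees $0$ and $1$, the differential $d$ encoding the two arrow actions; this recovers in particular the rank $\hi(\bd,\be)$ established in the previous lemma. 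First I would record that this resolution is valid over all of $\cM\xx\cM$ (here one uses that $\cM$ is the stack of representations of $Q$, so that the $\cV_i$ and $\cW_i$ are honest vector bundles). Consequently, in $K_0(\cM\xx\cM)$,
$$[\Te]=\sum_{i\in I}[\cV_i\dual\ts\cW_i]-\sum_{(a\colon i\to j)\in Q_1}[\cV_i\dual\ts\cW_j].$$

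Next I would compute the left-hand side of the asserted equality. Since $\si$ interchanges the two factors, $\si^*\cV_i=\cW_i$ and $\si^*\cW_i=\cV_i$, and since derived pullback commutes with dualization, which is in turn an involution of $K_0(\cM\xx\cM)$, applying $\si^*$ and then dualizing the displayed expression for $[\Te]$ gives
$$[\si^*\Te\dual]=\sum_{i\in I}[\cV_i\dual\ts\cW_i]-\sum_{(a\colon i\to j)\in Q_1}[\cV_j\dual\ts\cW_i],$$
using $\si^*(\cV_i\dual\ts\cW_j)\dual\iso\cV_j\dual\ts\cW_i$. Comparing with the formula for $[\Te]$, the identity $[\si^*\Te\dual]=[\Te]$ becomes equivalent to
$$\sum_{(a\colon i\to j)\in Q_1}[\cV_j\dual\ts\cW_i]=\sum_{(a\colon i\to j)\in Q_1}[\cV_i\dual\ts\cW_j].$$

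The final step is the only place where the symmetry of $Q$ enters. Writing $s(a)$ and $t(a)$ for the source and target of $a\in Q_1$, the left-hand side above equals $\sum_{a\in Q_1}[\cV_{t(a)}\dual\ts\cW_{s(a)}]$ and the right-hand side equals $\sum_{a\in Q_1}[\cV_{s(a)}\dual\ts\cW_{t(a)}]$. Because $Q$ is symmetric, for every ordered pair $(i,j)$ the number of arrows $i\to j$ equals the number of arrows $j\to i$, so one may fix a bijection $\tau\colon Q_1\to Q_1$ with $s(\tau(a))=t(a)$ and $t(\tau(a))=s(a)$; re-indexing the left-hand sum along $\tau$ turns it term by term into the right-hand sum, which finishes the proof. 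The argument is essentially formal, and the only point that needs care is that everything is done at the level of $K$-theory classes --- there is in general no isomorphism between the complexes $\Te$ and $\si^*\Te\dual$. It is worth recording explicitly that this lemma is precisely the verification of the ``weakly-symmetric'' hypothesis (4a) of Theorem~\ref{th:joyce} for the category of representations of a symmetric quiver.
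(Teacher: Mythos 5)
Your proof is correct and follows essentially the same route as the paper's: both rely on the standard two-term projective resolution of $\RHom$ between quiver representations and then invoke the symmetry of $Q$ (same number of arrows $i\to j$ as $j\to i$) to match the two expressions in $K_0$. The only presentational difference is that the paper argues fiberwise with $\RHom(M,N)$ and then remarks that ``the statement of the lemma is a global version of this observation,'' whereas you carry out the globalisation explicitly with the universal bundles $\cV_i,\cW_i$ from the start, which makes the final step more transparent.
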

\begin{proof}
Let $A=\bC Q$ be the path algebra of $Q$.
For every $i\in I$, let $e_i\in A$ be the corresponding idempotent and $P=Ae_i$ be the corresponding projective $A$-module.
For any representation $M$, we have the standard projective resolution
$$0\to\bop_{a:i\to j}P_j\ts M_i\to\bop_{i}P_i\ts M_i\to M\to0.$$
Therefore $\RHom(M,N)$ can be written as a complex
$$\dots\to0\to\bop_{i}\Hom(M_i,N_i)\to\bop_{a:i\to j}\Hom(M_i,N_j)\to0\to\dots$$
Similarly, $\RHom(N,M)$ can be written as a complex
$$\dots\to0\to\bop_{i}\Hom(N_i,M_i)\to\bop_{a:i\to j}\Hom(N_i,M_j)\to0\to\dots$$
Using the fact that $Q$ is symmetric and that $\Hom(N_i,M_j)\iso\Hom(M_j,N_i)\dual$,
we can rewrite this complex in the form
$$\dots\to0\to\bop_{i}\Hom(M_i,N_i)\dual\to
\bop_{a:i\to j}\Hom(M_i,N_j)\dual\to0\to\dots$$
This implies that 
$[\RHom(M,N)\dual]=[\RHom(N,M)]$.
The statement of the lemma is a global version of this observation.
\end{proof}

Applying the previous lemma and the construction of Theorem 
\ref{th:joyce} we obtain a vertex algebra structure on $\cH_Q\dual$ (\cf Proposition \ref{prop:sympoly} where we proved that $\cH_Q\dual\iso\cP_Q$ as vector spaces).
Note that in the original version of Theorem \ref{th:joyce} proved in \joy, one requires that $\Te$ is symmetric, meaning that $\si^*\Te\dual\iso\Te\dual[2n]$ for some $n\in\bZ$.
Despite of the previous lemma, this condition is generally not satisfied by $\Te=\RHom$ for symmetric quivers (otherwise the category of quiver representations would be Calabi-Yau).
Because of this technical difficulty, one considered in \joy only vertex algebras associated to $\Te=\RHom$ for Calabi-Yau categories or
to the symmetrized perfect complex
$\Te=\RHom\oplus\si^*(\RHom\dual)$ for the category of quiver representations.
The latter choice leads to vertex algebras not directly related to \cohas.

On the other hand, in \cite{latyntsev_cohomological} one associated quantum vertex algebras to \Te which are not necessarily symmetric.
Because of the previous lemma, in the case of symmetric quivers and $\Te=\RHom$, the resulting quantum vertex algebra is actually a vertex algebra (in an appropriate symmetric monoidal category).
Theorem \ref{th:DT} which states, in particular,
that $\cH_Q\dual$ has a structure of a vertex bialgebra should be compared to the result of \cite{latyntsev_cohomological} about the quantum vertex bialgebra structure on $\cH_Q\dual$.

\section{CoHA-modules and free vertex algebras}\label{sec:iso-modules}

In this section, we shall use previously obtained identification $\cH_Q\dual\cong\P_Q$ to give a new interpretation of the \COHA modules 
 \[
\cM_\bw=\cH_Q/\rbr{e_\bd^\bw\cH_{Q,\bd}\col \bd>0}.
 \]
considered in Section \ref{sec:modules}. Since $\cM_\bw$ is a quotient of $\cH_Q$, it is natural to seek for a description of $\cM_\bw\dual$ as a subspace of $\P_Q$. In this section, we give two such descriptions. In \S\ref{modif coproduct}, we interpret $\cM_\bw\dual$ in terms of the coproduct generalising the coproduct $\P_Q\to\P_Q\otimes\P_Q$ considered earlier. In \S\ref{subspace}, we exhibit a combinatorially defined spanning set of $\cM_\bw\dual\subset\P_Q$.  

\subsection{Modified coproduct}\label{modif coproduct}

For each $\bw\in\bN^I$, let us consider the subset
$$X_{\bw}=\sets{i(n)\in X}{n\ge-\bw_i}$$
of the set of generators of the Lie algebra $\fL$, and the corresponding $U(\fL)$-module $\P_\bw=U(\fL)/U(\fL)X_\bw$ with the cyclic vector $\one_\bw$.
For each $i\in I$, the series $\mathbf{i}(z)=\sum_{n\in\bZ}i(n)z^{-n-1}$ defines a field on $\P_\bw$.
These series can be used to equip $\P_\bw$ with a structure of a module over the vertex algebra $\P_Q$; clearly, $\P_Q$ itself is a particular case of this construction for $\bw=0$. 

Note that since $\bw\in\bN^I$, we have $X_+\subset X_\bw$, and therefore there is a canonical surjection of $U(\fL)$-modules $\pi\colon\P_Q\to \P_\bw$. We shall now see how the graded dual 
$$
\P_\bw^\vee=\bigoplus_{\bd\in\bN^{I}}\P_{\bw,\bd}\dual
$$
 of $\P_\bw$ is included in $\P_Q\dual$. As before, we fix a dimension vector $\bd\in\bN^I$ and let $p=\n\bd$ and $\lb j=(j_1,\dots,j_p)$ be any sequence of vertices with $e_{j_1}+\cdots+e_{j_p}=\bd$.

\begin{lemma}\label{lm:sympoly1}
For any $\ksi\in\cP_{\bw,\bd}\dual$, the Laurent series 
 $$ 
F_{\ksi,\lb j}(z_1,\ldots,z_p)=\prod_{k<l}(z_k-z_l)^{\cN(j_k,j_l)}\cdot
\ang{\xi, Y(j_1,z_1)Y(j_2,z_2)\cdots Y(j_p,z_p)\one_\bw}.
 $$
is completely symmetric under the action of $\Si_p$ permuting simultaneously the vertices $j_p$ and the variables $z_p$. Moreover, it is a polynomial divisible by the product $z_1^{\bw_{j_1}}\cdots z_p^{\bw_{j_p}}$.
\end{lemma}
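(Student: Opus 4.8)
The plan is to adapt the proof of Lemma~\ref{lm:sympoly} essentially verbatim, keeping track in addition of the order of vanishing of the relevant series in each variable $z_k$.

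First I would establish the symmetry. Since $\P_\bw$ is a module over the vertex algebra $\P_Q$, the fields $Y(i,z)$ acting on $\P_\bw$ are pairwise mutually local of order $\cN(i,j)$, so that
$$(z-w)^{\cN(i,j)}Y(i,z)Y(j,w)=(w-z)^{\cN(i,j)}Y(j,w)Y(i,z)$$
as operators on $\P_\bw$. Exactly as in Lemma~\ref{lm:sympoly}, multiplying $\ang{\xi,Y(j_1,z_1)\cdots Y(j_p,z_p)\one_\bw}$ by $\prod_{k<l}(z_k-z_l)^{\cN(j_k,j_l)}$ converts the interchange of two adjacent factors into a genuine symmetry, so $F_{\ksi,\lb j}$ is invariant under the diagonal action of $\Si_p$ on the vertices $j_k$ and the variables $z_k$.

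Next I would control the order of vanishing, which is the one new point compared with Lemma~\ref{lm:sympoly}. By definition $X_\bw=\{i(n):n\ge-\bw_i\}$ and $\P_\bw=U(\fL)/U(\fL)X_\bw$, so $\one_\bw$ is annihilated by $j_p(n)$ for every $n\ge-\bw_{j_p}$; hence $Y(j_p,z_p)\one_\bw$ involves only monomials $z_p^m$ with $m\ge\bw_{j_p}$. Applying the remaining fields $Y(j_{p-1},z_{p-1}),\dots,Y(j_1,z_1)$ one at a time, and using that $\P_\bw$ is restricted (it is a quotient of $\P_Q$ via $\pi$, and restrictedness descends to quotients), the product $Y(j_1,z_1)\cdots Y(j_p,z_p)\one_\bw$ lies in $\P_\bw\lser{z_1}\cdots\lser{z_{p-1}}\pser{z_p}$ and is still divisible by $z_p^{\bw_{j_p}}$; pairing with $\xi$ preserves this. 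In the prefactor, the variable $z_p$ occurs only in the factors $(z_k-z_p)^{\cN(j_k,j_p)}$ with $k<p$, whose binomial expansions (in the second summand $-z_p$) carry only non-negative powers of $z_p$, even when $\cN(j_k,j_p)=-1$. So $F_{\ksi,\lb j}$ has only powers $z_p^m$ with $m\ge\bw_{j_p}$. Invoking the $\Si_p$-symmetry from the previous step, after relabelling so that any chosen index plays the role of the last one, I conclude that every monomial of $F_{\ksi,\lb j}$ has $z_k$-degree at least $\bw_{j_k}$ for each $k$; in particular it carries no negative powers at all, and since $\xi$, lying in the graded dual, is supported on finitely many graded components, $F_{\ksi,\lb j}$ is a genuine polynomial, divisible by $z_1^{\bw_{j_1}}\cdots z_p^{\bw_{j_p}}$.

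The step demanding the most care is, as in Lemma~\ref{lm:sympoly}, dealing with vertices $i$ for which $\cN(i,i)=-1$: there the prefactor genuinely introduces negative powers of some variables, so one cannot conclude polynomiality one variable at a time. The resolution is identical to that of Lemma~\ref{lm:sympoly}: establish full symmetry first, and only then transfer the statement ``$F_{\ksi,\lb j}$ has no negative powers of, and is divisible by $z_p^{\bw_{j_p}}$ in, the last variable'' to all variables simultaneously. Beyond this and the bookkeeping above, no further ideas are required.
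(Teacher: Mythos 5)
Your proof is correct and follows exactly the route the paper intends: the paper's own proof of Lemma~\ref{lm:sympoly1} consists of the single remark that one replaces $X_+$ by $X_\bw$ in the argument of Lemma~\ref{lm:sympoly}, turning ``no negative powers of $z_p$'' into ``no powers of $z_p$ below $\bw_{j_p}$,'' which is precisely what you spell out. Your additional bookkeeping (the prefactor $(z_k-z_p)^{\cN(j_k,j_p)}$ contributing only non-negative powers of $z_p$ even when $\cN=-1$, and the use of $\Si_p$-symmetry to transfer the divisibility from the last variable to all of them) is exactly the hidden content of ``analogous.''
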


\begin{proof}
The proof is analogous to that of Lemma \ref{lm:sympoly}; the only difference is that $X_+$ is replaced by $X_\bw$, which has the effect of replacing the property of absence of negative powers by the property of absence of powers of $z_p$ that are less than $\bw_{j_p}$. 
\end{proof}

\begin{proposition}\label{prop:sympoly1}
For any $\bd\in\bN^I$, 
we have an isomorphism of graded vector spaces 
 $$
F\colon \P_{\bw,\bd}\dual\to e_{\bd}^\bw\La_\bd[-\hi(\bd,\bd)],\qquad
e_{\bd}^\bw
=\prod_{i\in I}\prod_{k=1}^{\bd_i}x_{i,k}^{\bw_i},
$$ 
defined by the formula
$$
\ksi\mapsto F_\ksi=\prod_{k<l}(z_k-z_l)^{\cN(i_k,i_l)}\cdot
\ang{\xi, Y(i_1,z_1)Y(i_2,z_2)\cdots Y(i_p,z_p)\one_\bw},
$$
where $p=\n\bd$, $i_{\bd_1+\dots+\bd_{j-1}+k}=j$ and $z_{\bd_1+\dots+\bd_{j-1}+k}=x_{j,k}$,
for $j\in I$ and $1\le k\le \bd_j$.
\end{proposition}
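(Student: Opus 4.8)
The plan is to follow the proof of Proposition~\ref{prop:sympoly} almost line for line, with Lemma~\ref{lm:sympoly} replaced by Lemma~\ref{lm:sympoly1} and the lattice vacuum replaced by the cyclic vector of a shifted Fock module; the three points to check are that $F$ is a well-defined degree-zero map into $e_\bd^\bw\La_\bd[-\hi(\bd,\bd)]$, that it is injective, and that it is surjective. For the first two points I would use the canonical surjection of $U(\fL)$-modules $\pi\colon\P_Q\onto\P_\bw$, which sends $\one$ to $\one_\bw$ and intertwines each field $Y(i,z)$ with itself. Normalising the weight grading of $\P_\bw$ so that $\wt(\one_\bw)=0$ makes $\pi$ degree preserving, so $\pi\dual\colon\P_{\bw,\bd}\dual\emb\P_{Q,\bd}\dual$ is injective of degree zero, and the present $F$ factors as $\pi\dual$ followed by the isomorphism $F$ of Proposition~\ref{prop:sympoly}. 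Hence $F\colon\P_{\bw,\bd}\dual\to\cH_{Q,\bd}=\La_\bd[-\hi(\bd,\bd)]$ is injective of degree zero (the degree count being verbatim that of Proposition~\ref{prop:sympoly}, since $\one_\bw$ has weight zero just like $\one$), and by the divisibility part of Lemma~\ref{lm:sympoly1} its image lands in $e_\bd^\bw\La_\bd[-\hi(\bd,\bd)]$. Injectivity also follows directly as in Proposition~\ref{prop:sympoly}: $F_\ksi=0$ forces $\ang{\ksi,Y(j_1,z_1)\cdots Y(j_p,z_p)\one_\bw}=0$ for all admissible $\lb j$ (divide by $\prod_{k<l}(z_k-z_l)^{-\cN(j_k,j_l)}$, legitimate since $F_\ksi$ is a polynomial), whence $\ksi$ kills the spanning set $\{j_1(n_1)\cdots j_p(n_p)\one_\bw\}$ of $\P_{\bw,\bd}$.

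The genuinely new point is surjectivity. I would replace $V_L$ by the $V_L$-module $V_{L+\mu_\bw}=\bop_{\ga\in L}V_{\ga+\mu_\bw}$, where $\mu_\bw\in\Hom(L,\bZ)$ is the functional with $\mu_\bw(e_j)=\bw_j$; restricting along $\P_Q\emb V_L$ makes this a $\P_Q$-module on which $i(n)$ acts as $\Ga_{e_i}(n)$. On the cyclic vector $\ket{\mu_\bw}$, the factor $z^{\hi(e_i,-)}$ in $\Ga_{e_i}(z)$ contributes $z^{\mu_\bw(e_i)}=z^{\bw_i}$ and $\Ga^+_{e_i}(z)$ acts trivially, so $\Ga_{e_i}(z)\ket{\mu_\bw}$ only contains powers $z^m$ with $m\ge\bw_i$, that is $\Ga_{e_i}(n)\ket{\mu_\bw}=0$ for $n\ge-\bw_i$ — exactly the defining relations of $\P_\bw=U(\fL)/U(\fL)X_\bw$. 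By the universal property of $\P_\bw$ there is thus a morphism of $\P_Q$-modules $\phi_\bw\colon\P_\bw\to V_{L+\mu_\bw}$ with $\phi_\bw(\one_\bw)=\ket{\mu_\bw}$; substituting it into the definition of $F_\ksi$ and iterating \eqref{Gamma-product} shows that, after the substitution $z_{\bd_1+\dots+\bd_{j-1}+k}=x_{j,k}$,
$$\Ga_{i_1}(z_1)\cdots\Ga_{i_p}(z_p)\ket{\mu_\bw}=\prod_{k<l}(z_k-z_l)^{\hi(e_{i_k},e_{i_l})}\cdot e_\bd^\bw\cdot\prod_{j\in I}\exp\!\left(\sum_{n>0}e_{j,-n}\frac{\sum_{k=1}^{\bd_j}x_{j,k}^{n}}{n}\right)\ket{\bd+\mu_\bw},$$
where the $(z_k-z_l)$ are also read with $z$ replaced by $x$ and the factor $e_\bd^\bw$ arises from $\prod_k z_k^{\bw_{i_k}}$. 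The prefactor $\prod_{k<l}(z_k-z_l)^{\cN(i_k,i_l)}$ of $F_\ksi$ cancels the first product since $\cN=-\hi$, leaving $e_\bd^\bw$ times a formal sum of the standard Fock basis vectors of $V_{\bd+\mu_\bw}$ with coefficients proportional to the corresponding monomials in the power sums of the $x_{j,\bullet}$. Pulling the standard dual basis of $V_{\bd+\mu_\bw}$ back along $\phi_\bw$ to obtain functionals $\ksi$, the $F_\ksi$ sweep out all products $e_\bd^\bw\cdot(\text{power-sum monomial})$; since the power sums generate $\La_\bd$, this gives surjectivity onto $e_\bd^\bw\La_\bd[-\hi(\bd,\bd)]$.

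The step I expect to be the main obstacle is pinning down this shifted realisation: constructing $V_{L+\mu_\bw}$ as a bona fide $\P_Q$-module, grading it compatibly with $\P_\bw$ so that the pulled-back functionals genuinely lie in the graded dual, and verifying that $\ket{\mu_\bw}$ is annihilated by $\Ga_{e_i}(n)$ for precisely the range $n\ge-\bw_i$ occurring in the definition of $\P_\bw$ (this is exactly what produces the extra factor $e_\bd^\bw$). Once that is in place, everything else is a routine transcription of the arguments already used for $\P_Q$.
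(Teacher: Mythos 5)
Your proof is correct and, modulo the details, coincides with what the paper means by \lq\lq completely analogous to Proposition~\ref{prop:sympoly}\rq\rq: the well-definedness and injectivity factor through the surjection $\pi\colon\P_Q\to\P_\bw$, and the surjectivity argument is the same power-sum computation in a shifted Fock module $V_{L+\mu_\bw}$ in place of $V_L$. Your identification of the precise annihilation condition $\Ga_{e_i}(n)\ket{\mu_\bw}=0$ for $n\ge-\bw_i$ and of the resulting $e_\bd^\bw$ factor from $\prod_k z_k^{\bw_{i_k}}$ correctly supplies the one genuinely new ingredient the paper leaves implicit.
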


\begin{proof}
Completely analogous to that of Proposition \ref{prop:sympoly}. 
\end{proof}

We note that $e_{\bd}^\bw\La_\bd[-\hi(\bd,\bd)]\iso e_{\bd}^\bw\cH_{Q,\bd}$ is precisely one of the vector spaces used in the shuffle algebra description \eqref{coha module1} of the module $\cM_\bw$.  
To use this observation, we consider the surjection $\pi:\P_Q\to\P_\bw$ and define a $U(\fL)$-module map 
 $$
\rho_\bw:=(\Id\otimes\pi)\delta\colon\P_Q\to \P_Q\otimes\P_\bw$$
which can be interpreted as a coaction of the cocommutative coalgebra $\P_\bw$ on its comodule $\P_Q$.
Explicitly, we have
\begin{equation}\label{explicit coprod}
\rho_\bw(i_1(n_1)i_2(n_2)\dots i_p(n_p)\one)
=\sum_{A\sqcup B=\set{1,\dots,p}} (-1)^{\kappa(A,B)}\cdot
\prod_{a\in A}i_{a}(n_{a})\one \otimes 
\prod_{b\in B} i_{b}(n_{b})\one_{\bw},
\end{equation}
where $\kappa(A,B)=\sum_{a>b}\chi({i_a},{i_b})$.
Note that the projection of $\rho_\bw(v)$ to $\P_Q\ts\P_{\bw,0}$ is equal to $v\ts \one_\bw$.
Since the description of the module $\cM_\bw$ uses the spaces $e_{\bd}^\bw\cH_{Q,\bd}$ with $\bd>0$, it will be useful to consider the reduced coaction
$$\bar\rho_\bw:\P_Q\xto{\rho_\bw} \P_Q\otimes\P_\bw\to
\P_Q\otimes\bar\P_\bw,\qquad
\bar\P_\bw=\P_\bw/\P_{\bw,0}.
$$
Using the isomorphism $\bar\P_\bw\iso\bop_{\bd>0}\P_{\bw,\bd}$, we can write $\bar\rho_\bw$ in the form
$$\bar\rho_\bw(v)=\rho_\bw(v)-v\otimes\one_\bw.$$

\begin{theorem}\label{th:modkernel}
The kernel of the map $\bar{\rho}_\bw$ is isomorphic to the graded dual of the \COHA-module 
$$\cM_\bw=\cH_Q/\rbr{e_\bd^\bw\cH_{Q,\bd}\col \bd>0}.$$
\end{theorem}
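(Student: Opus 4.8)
\emph{Proof plan.} The plan is to derive the statement by graded duality from the shuffle-algebra description $\cM_\bw=\cH_Q/I$, where $I=(e_\bd^\bw\cH_{Q,\bd}\col\bd>0)$ is the ideal of \eqref{coha module1}, and then to transport everything into $\P_Q$ along the isomorphism $F$ of Propositions \ref{prop:sympoly} and \ref{prop:sympoly1}. All objects occurring here ($\P_Q$, $\P_\bw$, $\bar\P_\bw$, $\cH_Q$, $\cM_\bw$ and their tensor products) are $(L\xx\bZ)$-graded with finite-dimensional components, so I will use freely that graded duality is exact, sends $\ts$ to $\ts$, separates homogeneous points, and turns surjections into injections; in particular, for a graded linear map $f\colon V\to W$ one has $\ker f=(\im f\dual)^\perp$ (annihilator for the pairing $V\dual\ts V\to\bQ$), and for a graded subspace $J\sbs V$ one has $(V/J)\dual\iso J^\perp$. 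Applying the latter to $\cM_\bw=\cH_Q/I$ and transporting along $F\dual\colon\cH_Q\dual\isoto\P_Q$ (the dual of $F\colon\P_Q\dual\isoto\cH_Q$), the subspace $\cM_\bw\dual=I^\perp$ becomes the annihilator $\bigl(F\inv(I)\bigr)^\perp\sbs\P_Q$ of $F\inv(I)\sbs\P_Q\dual$. Thus the theorem amounts to the identity $\bigl(F\inv(I)\bigr)^\perp=\ker\bar\rho_\bw$, and it only remains to compute $F\inv(I)$.

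To do this I would first note that the isomorphism $F$ of Proposition \ref{prop:sympoly1} is exactly the composite $\P_{\bw,\bd}\dual\xto{\pi\dual}\P_{Q,\bd}\dual\xto{F}\cH_{Q,\bd}$, where $\pi\dual$ is the graded dual of the $U(\fL)$-module surjection $\pi\colon\P_Q\to\P_\bw$ with $\one\mto\one_\bw$: the matrix-coefficient formulas defining $F$ in the two propositions coincide once one precomposes with $\pi\dual$. Hence $e_\bd^\bw\cH_{Q,\bd}=F(\pi\dual(\P_{\bw,\bd}\dual))$. Next, by Theorem \ref{th:VOA-CoHA} the shuffle product of $\cH_Q$ corresponds under $F$ to the product $\delta\dual$ of the algebra $\P_Q\dual$, while $\rho_\bw\dual=\delta\dual\circ(\Id\ts\pi\dual)$ follows by dualizing $\rho_\bw=(\Id\ts\pi)\circ\delta$; combining these,
$$F\inv(\cH_Q*e_\bd^\bw\cH_{Q,\bd})=\delta\dual(\P_Q\dual\ts\pi\dual(\P_{\bw,\bd}\dual))=\rho_\bw\dual(\P_Q\dual\ts\P_{\bw,\bd}\dual).$$
Summing over $\bd>0$ and using $\bar\P_\bw\dual=\bop_{\bd>0}\P_{\bw,\bd}\dual\sbs\P_\bw\dual$ yields $F\inv(I)=\rho_\bw\dual(\P_Q\dual\ts\bar\P_\bw\dual)$. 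Finally, $\bar\rho_\bw=(\Id\ts q)\circ\rho_\bw$ for the projection $q\colon\P_\bw\onto\bar\P_\bw$ (equivalently, the summand $v\ts\one_\bw$ in \eqref{explicit coprod} pairs trivially against $\P_Q\dual\ts\bar\P_\bw\dual$), so $\bar\rho_\bw\dual$ is precisely the restriction of $\rho_\bw\dual$ to $\P_Q\dual\ts\bar\P_\bw\dual$, and therefore $F\inv(I)=\im\bar\rho_\bw\dual$. Combining this with the reduction above, $\cM_\bw\dual\iso\bigl(F\inv(I)\bigr)^\perp=\bigl(\im\bar\rho_\bw\dual\bigr)^\perp=\ker\bar\rho_\bw$, which is the assertion.

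The individual steps are each short, so the only genuine work lies in careful bookkeeping inside the $(L\xx\bZ)$-graded symmetric monoidal category. Concretely, one must check that $\delta\dual\circ(\Id\ts\pi\dual)$ really equals $\rho_\bw\dual$ with all of the braiding signs in place, so that the sign $(-1)^{\kappa(A,B)}$ of \eqref{explicit coprod} is exactly what the braiding on the dual side produces, and that $(\P_Q\ts\bar\P_\bw)\dual$ is genuinely $\P_Q\dual\ts\bar\P_\bw\dual$, which uses the finite-dimensionality of graded components recorded at the outset. I do not expect a deeper obstacle: the one delicate analytic input — polynomiality of the relevant matrix coefficients, cf. Lemma \ref{lm:sympoly1} and Proposition \ref{prop:sympoly1} — is already available from the results we are quoting.
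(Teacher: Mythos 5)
Your argument is correct and follows the same route as the paper's proof: dualize the exact sequence defining $\ker\bar\rho_\bw$, transport along $F$ (Propositions \ref{prop:sympoly}, \ref{prop:sympoly1}), and match $\bar\rho_\bw\dual$ against the shuffle product via Theorem \ref{th:VOA-CoHA} and the ideal description \eqref{coha module1}. The main thing you make explicit, and the paper elides inside its commutative diagram, is the factorisation $F|_{\P_{\bw,\bd}\dual}=F\circ\pi\dual$ together with $\rho_\bw\dual=\delta\dual\circ(\Id\ts\pi\dual)$, which is exactly the reason the right square of that diagram commutes.
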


\begin{proof}
Note that we have an exact sequence 
 \[
0\to\ker(\bar{\rho}_\bw)\hookrightarrow\P_Q\xto{\bar\rho_\bw}_\bw  \P_Q\otimes\bar\P_\bw,
 \]
which, after passing to graded duals, becomes 
 \[
0\leftarrow\ker(\bar{\rho}_\bw)^\vee\twoheadleftarrow\P_Q^\vee\xlto{\bar\rho_\bw\dual}\P_Q^\vee\otimes\bar\P_\bw^\vee.
 \]
Using the isomorphisms of Propositions \ref{prop:sympoly}, \ref{prop:sympoly1}, we obtain the diagram
\begin{ctikzcd}
0&\lar\ker(\bar\rho_\bw)\dual\dar&\P_Q\dual\lar\dar["F"]
&\P_Q\dual\ts\bar\P_\bw\dual
\dar["F\ts F"]\lar["\bar\rho_\bw\dual"']\\
0&\lar \M_\bw&\cH_Q\lar&\cH\ts\Big(\bop_{\bd>0}e_\bd^\bw\cH_{Q,\bd}\Big)\lar["*"']
\end{ctikzcd}
where the bottom right map is the shuffle product by
Theorem \ref{th:VOA-CoHA}. Its cokernel is isomorphic to
the \COHA module $\cM_\bw$ by \eqref{coha module1}.
Therefore we obtain an isomorphism
$\ker(\bar{\rho}_\bw)^\vee\iso\cM_\bw$.
\end{proof}

\subsection{Subspace construction}\label{subspace}

We shall now exhibit an explicit combinatorial spanning set of the subspace $\ker(\bar{\rho})\subset\P_Q$.
Let $\Q_\bw$ be the
subspace of $\P_Q\iso U(\fL)\ts_{U(\fL_+)}\bQ$
obtained by applying elements of $X_\bw=\sets{i(n)}{n\ge-\bw_i}$ to the vacuum $\one\in\P_Q$.

We can also interpret this space as follows.
Let $\fL^\bw_+$ denote the Lie subalgebra of $\fL$ generated by the set $X_\bw$.
As $X_+\sbs X_\bw$, for $\bw\in\bN^I$, 
we have $\fL_+\subset \fL^\bw_+$.
Then $\Q_\bw$ can be identified with
$$\Q_\bw=U(\fL^\bw_+)\ts_{U(\fL_+)}\bQ,$$
where $\bQ$ is equipped with the structure of the trivial $\fL_+$-module.

We can actually identify $\Q_\bw$ with the universal enveloping algebra of a certain Lie algebra as follows.
Consider the Lie algebra isomorphism
\begin{equation}
\tau_\bw\colon \fL\to\fL,\qquad i(n)\mto i(n-\bw_i),
\end{equation}
which extends to the isomorphism $U(\fL)\to U(\fL)$.
Note that $\ta_\bw$ maps $\fL_+$ to $\fL^\bw_+$ isomorphically; 
we also define a new Lie algebra $\fL^\bw_-=\tau_\bw(\fL_-)$, 
leading to the direct sum decomposition $\fL=\fL^\bw_-\oplus\fL^\bw_+$. 
As $\fL_+\subset \fL^\bw_+$,
we have direct sum decompositions
\begin{equation}\label{decomp1}
\fL=\fL^\bw_-\oplus\fLm \oplus \fL_+,\qquad
\fL^\bw_+=\fLm \oplus \fL_+,\qquad
\fLm=\fL_-\cap \fL^\bw_+,
\end{equation}
where $\fLm$ is a Lie algebra, being an intersection of two Lie algebras.
We conclude that
\begin{equation}\label{eq:dualmodule-smallliealg}
\Q_\bw=U(\fL^\bw_+)\ts_{U(\fL_+)}\bQ\iso U(\fLm ).
\end{equation}

\begin{theorem}\label{module spanning}
For each $\bw\in\bN^I$, we have a commutative diagram
\begin{ctikzcd}
\P_Q\dual\rar["F"]\dar[->>]&\cH_Q\dar[->>]\\
\Q_\bw\dual\rar["\sim"]&\M_\bw
\end{ctikzcd}
\end{theorem}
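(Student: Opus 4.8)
The plan is to assemble the diagram from the two isomorphisms already in hand together with the observation that $\Q_\bw$ is exactly a complement to the kernel of the reduced coaction map. First I would recall from \S\ref{modif coproduct} that Theorem \ref{th:modkernel} identifies $\cM_\bw$ with $\ker(\bar\rho_\bw)\dual$, and that Proposition \ref{prop:sympoly} gives the horizontal isomorphism $F\colon\P_Q\dual\isoto\cH_Q$ in the top row. So the content of the present theorem is that the surjection $\cH_Q\onto\cM_\bw$ corresponds, under $F$, to a surjection $\P_Q\dual\onto\Q_\bw\dual$ which is the graded dual of the \emph{inclusion} $\Q_\bw\hookrightarrow\P_Q$, and that this identifies $\Q_\bw\dual$ with $\cM_\bw$. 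The key point is therefore a vector-space decomposition $\P_Q=\Q_\bw\oplus\ker(\bar\rho_\bw)$ compatible with all the gradings.

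The decomposition is supplied by the Lie-algebra bookkeeping in \eqref{decomp1}: we have $\fL=\fL^\bw_-\oplus\fL^\bw_+$ and $\fL^\bw_+=\fLm\oplus\fL_+$, so $\P_Q\iso U(\fL)\ts_{U(\fL_+)}\bQ\iso U(\fL^\bw_-)\ts U(\fLm)$ as graded vector spaces, with $U(\fLm)\iso\Q_\bw$ by \eqref{eq:dualmodule-smallliealg} and $U(\fL^\bw_-)\iso\P_\bw$ (the module $\P_\bw=U(\fL)/U(\fL)X_\bw$ of \S\ref{modif coproduct}, via $\tau_\bw$ applied to the identification $\P_Q\iso U(\fL_-)$). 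Under these identifications the reduced coaction $\bar\rho_\bw\colon\P_Q\to\P_Q\ts\bar\P_\bw$ of Theorem \ref{th:modkernel} restricted to the tensor factor $\Q_\bw=U(\fLm)$ lands in $\P_Q\ts\P_{\bw,0}$ (because elements built from $X_\bw$ acting on the vacuum of $\P_\bw$ are proportional to $\one_\bw$), hence $\bar\rho_\bw$ vanishes on $\Q_\bw$ after the reduction; conversely on the complementary factor $\P_\bw=U(\fL^\bw_-)$ the primitive part maps injectively into $\bar\P_\bw$. This gives $\ker(\bar\rho_\bw)=\Q_\bw$ on the nose, or at worst $\P_Q=\Q_\bw\oplus\ker(\bar\rho_\bw)$, which is all we need: the projection $\P_Q\dual\onto\Q_\bw\dual$ dual to the inclusion has kernel $\ker(\bar\rho_\bw)^{\perp}$, and Theorem \ref{th:modkernel} together with Theorem \ref{th:VOA-CoHA} matches this with $\cH_Q\onto\cM_\bw$, so the square commutes and the bottom arrow is an isomorphism.

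Concretely I would proceed in four steps: (i) use the PBW theorem in $\cC$ (as recalled in \S\ref{sec:graded}) to write $\P_Q\iso\P_\bw\ts\Q_\bw$ as graded vector spaces, via the decomposition \eqref{decomp1}; (ii) check, directly from the explicit formula \eqref{explicit coprod} for $\rho_\bw$, that $\bar\rho_\bw$ kills $\Q_\bw$ and is injective on the complement $\P_\bw$, so that $\ker(\bar\rho_\bw)=\Q_\bw$; (iii) dualize the short exact sequence $0\to\Q_\bw\to\P_Q\to\P_\bw\to 0$ to obtain the left vertical surjection $\P_Q\dual\onto\Q_\bw\dual$; (iv) invoke Theorem \ref{th:modkernel} to identify $\Q_\bw\dual=\ker(\bar\rho_\bw)\dual$ with $\cM_\bw$ and Theorem \ref{th:VOA-CoHA}/\eqref{coha module1} to see that the resulting square is the dual of the one already established, hence commutes. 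The main obstacle, I expect, is step (ii): one must be careful that the decomposition $\fL=\fL^\bw_-\oplus\fL^\bw_+$ is only a decomposition of vector spaces (neither summand need be $\dd$-stable in a useful way, and $\fLm$ is merely the intersection $\fL_-\cap\fL^\bw_+$), so the identification $\ker(\bar\rho_\bw)=\Q_\bw$ has to be extracted from the explicit coaction formula rather than from abstract nonsense, keeping track of the braiding signs $(-1)^{\kappa(A,B)}$ throughout.
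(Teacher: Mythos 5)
Your argument establishes the containment $\Q_\bw\subseteq\ker(\bar\rho_\bw)$ by exactly the same direct computation as the paper: elements of $X_\bw$ annihilate $\one_\bw$, so for $v\in\Q_\bw$ the only surviving term in \eqref{explicit coprod} has $B=\varnothing$, whence $\bar\rho_\bw(v)=0$. But your step (ii) has a genuine gap. You want to conclude $\ker(\bar\rho_\bw)=\Q_\bw$ by showing $\bar\rho_\bw$ is injective on the PBW complement $\P_\bw\iso U(\fL^\bw_-)$, but the only evidence you offer is that ``the primitive part maps injectively into $\bar\P_\bw$.'' Injectivity on primitives alone does not give injectivity on all of $U(\fL^\bw_-)$ — the coaction mixes the symmetric powers non-trivially, and the PBW splitting $\P_Q\iso\P_\bw\ts\Q_\bw$ is only a vector-space isomorphism, not a coalgebra one — so this step, which you yourself flag as the main obstacle, is not actually carried out. (Also, your fallback phrasing ``$\P_Q=\Q_\bw\oplus\ker(\bar\rho_\bw)$'' is incompatible with the containment $\Q_\bw\subseteq\ker(\bar\rho_\bw)$ you just proved; you mean that $\ker(\bar\rho_\bw)$ meets the complement of $\Q_\bw$ trivially.)

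The paper sidesteps this by a Poincar\'e series comparison. Having established $\Q_\bw\subseteq\ker(\bar\rho_\bw)$, it computes both graded dimensions and shows they agree. On one side, $\Q_\bw\iso U(\fLm)$ and the PBW decomposition $\fL=\fL^\bw_-\oplus\fLm\oplus\fL_+$ gives $U(\fL_-)\iso U(\fL^\bw_-)\ts U(\fLm)$, so $Z(\Q_\bw,x,q)=A_Q(x,q^{-1})\cdot S_{2\bw}A_Q(x,q^{-1})^{-1}$ using Corollary~\ref{cor:char free vertex} and the shift property $Z(\ta_\bw M)=S_{2\bw}Z(M)$. On the other side, Theorem~\ref{th:modkernel} gives $Z(\ker(\bar\rho_\bw),x,q)=Z(\cM_\bw,x,q^{-1})$, and Proposition~\ref{char module} (ultimately Engel--Reineke) gives the same expression. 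Equality of characters together with the containment forces $\Q_\bw=\ker(\bar\rho_\bw)$, and the rest of the diagram is then assembled exactly as you describe. So the overall architecture of your proposal is right, but the hard half of the equality needs the character argument rather than the direct injectivity you sketched.
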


\begin{proof}
According to Theorem \ref{th:modkernel}, we have $\M_\bw\dual\iso \ker(\bar{\rho}_\bw)$,
so we need to show that $\ker(\bar{\rho}_\bw)=\Q_\bw$.
Let us first remark that $\Q_\bw\subset\ker(\bar{\rho}_\bw)$.
Indeed, if 
$$v=i_1(n_1)i_2(n_2)\dots i_p(n_p)\one\in\Q_\bw$$ with $i_k(n_k)\in X_\bw$ for all $k$, then in Formula \eqref{explicit coprod} we have $\prod_{b\in B} i_{b}(n_{b})\one_{\bw}=0$ whenever $B\ne\es$.
Therefore $\rho(v)=v\ts\one_\bw$ and $\bar\rho(v)=0$.

To establish that the inclusion $\Q_\bw\subset\ker(\bar{\rho}_\bw)$ is an equality, it is sufficient to show that these two subspaces of $\P_Q$ have the same Poincar\'e series, that is,
 \[
Z(\Q_\bw,x,q)=Z(\ker(\bar{\rho}),x,q). 
 \]
We shall now prove this equality. The left hand side computation will use various results about \LieCon algebras. We have already seen that there is an isomorphism $\Q_\bw\iso U(\fLm)$. The decomposition \eqref{decomp1} implies, together with the Poincar\'e--Birkhoff--Witt theorem, that we have 
an isomorphism of $L\xx\bZ$-graded vector spaces 
 $$
U(\fL_-)\iso
U(\fL)\ts_{U(\fL_+)}\bQ\iso
U(\fL^\bw_-)\otimes U(\fLm), 
 $$
and therefore  
 $$
Z(U(\fL_-),x,q)=Z(U(\fL^\bw_-),x,q)\cdot Z(U(\fLm),x,q),
 $$
or, equivalently, 
 $$
Z(U(\fLm),x,q)=Z(U(\fL_-),x,q)\cdot Z(U(\fL^\bw_-),x,q)^{-1}.
 $$
According to Corollary \ref{cor:char free vertex}, we have 
 \[
Z(U(\fL_-),x,q)=Z(\cH_Q,x,q^{-1})=A_Q(x,q^{-1}). 
 \]
This formula can be used to determine the Poincar\'e series of $U(\fL^\bw_-)=\ta_\bw(U(\fL_-))$. Indeed, we have $\wt(\ta_\bw(i(n))=\wt(i(n))+\bw_i$.
Therefore, for any graded subspace $M\sbs U(\fL)_\bd$, we have $\ch(\ta_\bw(M))=q^{\bw\cdot\bd}\ch(M)$ and, for any $L\xx\bZ$-graded
subspace $M\sbs U(\fL)$, we have
$$Z(\ta_\bw(M),x,q)=S_{2\bw}Z(M,x,q),$$
where $S_{\bw}(x^\bd)=q^{\oh\bw\cdot\bd}x^\bd$. This implies
 \[
Z(U(\fL^\bw_-),x,q)
=S_{2\bw}Z(U(\fL_-),x,q)
=S_{2\bw}A_Q(x,q^{-1}),
 \]
and so we may conclude that 
 \[
Z(\Q_\bw,x,q)=Z(U(\fLm),x,q)=A_Q(x,q^{-1})\cdot S_{2\bw}A_Q(x,q^{-1})\inv. 
 \]
At the same time, according to Theorem \ref{th:modkernel}, we have
 \[
Z(\ker(\bar{\rho}),x,q)=Z(\cM_\bw,x,q^{-1}).
 \]
Since Proposition \ref{char module} asserts that $Z(\M_{\bw},x,q)=A_Q(x,q)\cdot S_{-2\bw}A_Q(x,q)\inv$, we conclude that 
 \[
Z(\ker(\bar{\rho}),x,q)=A_Q(x,q^{-1})\cdot S_{2\bw}A_Q(x,q^{-1})\inv=Z(\Q_\bw,x,q),
 \]
which completes the proof.
\end{proof}

\begin{remark}
Let us consider the quiver $Q$ with one vertex $1$ and two loops. Its Euler form on $\bZ^I\iso \bZ$ is given by $\hi(e_1,e_1)=-1$, and the Lie algebra $\fL$ is the Lie superalgebra generated by odd elements $a_n$, $n\in\bZ$, such that 
 \[
[a_m,a_n]=[a_{m-1},a_{n+1}], \qquad m,n\in\bZ.
 \]
This is precisely the algebra considered in~\cite{MR2480715}, where it was proved that for $\bw=1$ and $\bd=n$, the dimension of $\Q_{\bw,\bd}$ is given by the $n$-th Catalan number, and incorporating the weight leads to the $q$-analogue of the Catalan numbers introduced by Carlitz and Riordan~\cite{MR168490}.
We can recover this result using Theorem \ref{module spanning}, which asserts that we should check the same for the dimension of $\cM_{\bw,\bd}$. The latter vector space is, up to a degree shift, isomorphic to the cohomology of the non-commutative Hilbert scheme
$\rH^{(2)}_{n,1}=\Hilb_{n,1}$.
According to~\cite{reineke_cohomology}, its dimension is indeed the $n$-th Catalan number. Moreover, the Poincar\'e polynomial of 
$\rH^{(2)}_{n,1}$ computed in \cite{reineke_cohomology} is easily seen to produce the expected $q$-Catalan number. 
\end{remark}

Recall that $\Q_\bw$ is spanned by elements obtained by applying elements of $$X_\bw=\sets{i(n)}{n\ge-\bw_i}$$
to the vacuum $\one\in\P_Q$. Extracting from these elements a basis of $\Q_\bw$ is far from obvious; for example, it does not seem that either of the two known combinatorial descriptions of a basis in $\P_Q$ (obtained in \cite[Theorem 1]{MR1935501} and in \cite[Theorem 4.8]{MR2967107}) is easy to use to describe a basis of $\Q_\bw$. If one is in the situation for which the locality relations form a Gr\"obner basis (for the most obvious order of monomials, such quivers are classified in \cite{DoFeRe}), such a description exists. For example, for each $n\ge 1$, if one considers the quiver on $n$ vertices with two loops at each vertex and one arrow $i\to j$ for each $i\ne j$, and the framing vector $\bw=(1,1,\ldots,1)$, the locality relations form a Gr\"obner basis and one recovers the result of \cite[Theorem 1]{MR2480715} stating that the space $\Q_\bw$ has an explicit combinatorial basis labeled by parking functions on $\{1,\ldots,n\}$.

On the other hand, one can describe an explicit basis of $\M_\bw$ parametrized by subtrees of the tree of paths in the framed quiver
\cite{reineke_cohomology,engel_smooth} (this basis depends on some non-canonical choices).
In the recent paper \cite{franzen_tautological} a canonical basis of $\M_\bw$ was constructed.
Taking the dual basis of $\M_\bw\dual\iso\Q_\bw$, we obtain a canonical combinatorial basis of $\Q_\bw$.
\subsection{Poincar\'e series of the Lie algebra \texorpdfstring{$\fL_+$}{L+} }\label{sec:strange-symmetry}

We shall now use our results on the Poincar\'e series of CoHA-modules to establish a surprising symmetry formula, showing that the Poincar\'e series of Lie algebras $\fL_-$ and $\fL_+$ add up to zero. To the best of our knowledge, this result does not follow from the general principles. It would be interesting to determine the class of vertex Lie algebras for which such symmetry holds.

\begin{theorem}\label{th:symmetry}
The Poincar\'e series $Z(\fL_-,x,q)$ and
$Z(\fL_+\dual,x,q)$ belong to the subring 
 \[
 \bQ(q^\oh)\pser{x_i\col i\in I}\subset \bQ\lser{q^\oh}\pser{x_i\col i\in I}.
 \]
In that subring, we have the equality 
$$Z(\fL_-,x,q\inv)=-Z(\fL_+\dual,x,q).$$
\end{theorem}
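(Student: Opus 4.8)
The plan is to deduce the symmetry from the explicit rationality of $Z(\fL_-,x,q)$ together with the two Poincaré series formulas for $\cM_\bw$ that were obtained in the proof of Theorem~\ref{module spanning}. Recall that $\cH_Q\dual\iso U(\fL_-)$ gives $Z(U(\fL_-),x,q)=A_Q(x,q\inv)$, and that $\fL_-\iso W\ts\bQ[\dd]$ with $\ch(W_\bd)\in\bN[q^{\pm\oh}]$ by Theorem~\ref{th:DT}; hence $Z(\fL_-,x,q)=\frac{Z(W,x,q)}{1-q}$ is a ratio of a series with polynomial coefficients by $1-q$, so it already lies in $\bQ(q^\oh)\pser{x_i\col i\in I}$ — this disposes of the first claim for $\fL_-$. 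The same claim for $\fL_+\dual$ will follow once we produce the symmetry formula itself, since the right-hand side will visibly be rational.

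The key input is that $\fL=\fL_+\oplus\fL_-$ as graded vector spaces, so by the PBW theorem $U(\fL)\iso U(\fL_-)\ts U(\fL_+)$, giving
$$Z(U(\fL),x,q)=Z(U(\fL_-),x,q)\cdot Z(U(\fL_+),x,q).$$
On the other hand, the module $\cM_\bw$ was identified in two ways: via $\cM_\bw\dual\iso\Q_\bw\iso U(\fLm)$ with $\fLm=\fL_-\cap\fL_+^\bw$, and via $Z(\cM_\bw,x,q)=A_Q(x,q)\cdot S_{-2\bw}A_Q(x,q)\inv$ from Proposition~\ref{char module}. Combining these, $Z(U(\fLm),x,q)=A_Q(x,q\inv)\cdot S_{2\bw}A_Q(x,q\inv)\inv$. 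Now the decomposition $\fL=\fL_-^\bw\oplus\fLm\oplus\fL_+$ (Equation~\eqref{decomp1}), again via PBW, gives
$$Z(U(\fL),x,q)=Z(U(\fL_-^\bw),x,q)\cdot Z(U(\fLm),x,q)\cdot Z(U(\fL_+),x,q).$$
Comparing with the first factorization of $Z(U(\fL),x,q)$ and using $Z(U(\fL_-^\bw),x,q)=S_{2\bw}Z(U(\fL_-),x,q)=S_{2\bw}A_Q(x,q\inv)$ (from $\fL_-^\bw=\ta_\bw(\fL_-)$), we get the identity $Z(U(\fL_-),x,q)=Z(U(\fL_-^\bw),x,q)\cdot Z(U(\fLm),x,q)$, which is consistent but not yet the symmetry. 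To extract information about $\fL_+$, I would instead take $\bw$ variable and exploit the $S_{2\bw}$-twist: from $Z(U(\fL),x,q)=Z(U(\fL_-^\bw),x,q)\cdot Z(U(\fLm),x,q)\cdot Z(U(\fL_+),x,q)$ and the formula for $Z(U(\fLm))$ one gets $Z(U(\fL),x,q)=S_{2\bw}A_Q(x,q\inv)\cdot A_Q(x,q\inv)\cdot S_{2\bw}A_Q(x,q\inv)\inv\cdot Z(U(\fL_+),x,q)$, i.e. $Z(U(\fL),x,q)=A_Q(x,q\inv)\cdot Z(U(\fL_+),x,q)$ independently of $\bw$ — so in fact $Z(U(\fL_+),x,q)=Z(U(\fL),x,q)\cdot A_Q(x,q\inv)\inv$, and hence $Z(U(\fL_+),x,q)$ is rational. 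Since $\fL_+\iso\fL_-^\bw/\dd(\cdots)$-type arguments and the $t$-derivation show $\fL_+$ is likewise free over $\bQ[\dd]$ on a space with polynomial character, we similarly write $Z(\fL_+,x,q)=\frac{Z(W',x,q)}{1-q}$; passing to the graded dual replaces $q$ by $q\inv$ componentwise but, crucially, the sign $(-1)^{(\bd,\bd)}$ in \eqref{part function} is unchanged, so $Z(\fL_+\dual,x,q)=Z(\fL_+,x,q\inv)$, which is the asserted rational element.

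Finally, to get $Z(\fL_-,x,q\inv)=-Z(\fL_+\dual,x,q)=-Z(\fL_+,x,q)$, I would pass from universal enveloping algebras back to the Lie algebras themselves by taking plethystic logarithms: since $Z(U(\fL_\pm))=\Exp(Z(\fL_\pm))$, the relation $Z(U(\fL_-),x,q)\cdot Z(U(\fL_+),x,q)=Z(U(\fL),x,q)$ becomes $Z(\fL_-,x,q)+Z(\fL_+,x,q)=Z(\fL,x,q)$. It therefore remains to show $Z(\fL,x,q)=Z(\fL_-,x,q)-Z(\fL_-,x,q\inv)$, equivalently $Z(\fL,x,q)+Z(\fL,x,q\inv)=0$; but $\fL$ is spanned by the $i(n)$, $n\in\bZ$, modulo the relations $(\dd a)(n)+n a(n-1)$, and the affinization/coefficient-algebra description of \S\ref{sec:coeff algebra} identifies $\fL$ with $C\ts\bQ[t^{\pm1}]$ modulo the image of $\tl\dd$, so $\fL_\bd\iso (C_\bd/\dd C_\bd)\ts\bQ[t,t\inv]/(\text{one relation per }C_\bd/\dd C_\bd)\iso W_\bd\ts\bQ[t^{\pm1}]/\bQ$ as graded vector spaces, where $t$ has weight $-1$ and the quotient kills the weight-$0$ part. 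Hence $\ch(\fL_\bd)=\ch(W_\bd)\cdot(\sum_{n\in\bZ}q^{n}-1)$ in a suitable completion, and since $\sum_{n\in\bZ}q^n$ is formally antisymmetric under $q\mapsto q\inv$ while $\ch(W_\bd)\in\bN[q^{\pm\oh}]$ is symmetric only after accounting for the parity, a careful bookkeeping in $\bQ(q^\oh)$ (using $\frac{1}{1-q}+\frac{1}{1-q\inv}=1$, the rational-function incarnation of $\sum_n q^n$) yields $Z(\fL,x,q)+Z(\fL,x,q\inv)=0$. The main obstacle I anticipate is precisely this last step: making rigorous the manipulation of the divergent series $\sum_{n\in\bZ}q^n$ by working throughout in the subring $\bQ(q^\oh)\pser{x_i}$ (where $\fL_\pm$ live), checking that all the $\Exp/\Log$ passages stay inside this subring, and matching the $(-1)^{(\bd,\bd)}$ signs in \eqref{part function} under $q\mapsto q\inv$ so that the final signs come out as $-1$ rather than $+1$.
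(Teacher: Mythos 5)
You correctly identify the essential ingredients — the double decomposition $\fL=\fL^\bw_-\oplus\fLm\oplus\fL_+$, the isomorphism $U(\fLm)\iso\M_\bw\dual$, and the corresponding formula $Z(U(\fLm),x,q)=A_Q(x,q\inv)\cdot S_{2\bw}A_Q(x,q\inv)\inv$ — but your route to the final identity goes through an ill-defined object and, as you anticipate, does not close the argument. The quantities $Z(\fL,x,q)$ and $Z(U(\fL),x,q)$ do not live in any ring where your manipulations are valid: for fixed $\bd$, the component $\fL_\bd$ has infinitely many nonzero weight components, with weights unbounded in \emph{both} directions (because $\fL_{-,\bd}$ is bounded below and $\fL_{+,\bd}$ bounded above in weight). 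So $Z(\fL,x,q)$ would live in $\bQ\pser{q^{\pm\oh}}\pser{x}$, which is not a ring, and the ``factorization'' $Z(U(\fL))=Z(U(\fL_-))\cdot Z(U(\fL_+))$ multiplies two elements of incompatible Laurent-series rings. Your later attempt to rescue this by treating $\sum_{n\in\bZ}q^n$ as a regularized element of $\bQ(q^\oh)$ is precisely where the proposal stalls. You also use that factorization to deduce ``rationality of $Z(U(\fL_+))$'', but nothing can be concluded about $Z(U(\fL_+))$ from an equation whose left-hand side is undefined.

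The paper's proof avoids the character of $\fL$ (or of $U(\fL)$) entirely. It subtracts the two decompositions to write
\[
Z(\fLm,x,q)=Z(\fL_-,x,q)-S_{2\bw}Z(\fL_-,x,q),
\qquad
Z((\fLm)\dual,x,q)=S_{-2\bw}Z(\fL_+\dual,x,q)-Z(\fL_+\dual,x,q),
\]
both of which are honest identities of well-defined series, because each of $\fL_-$, $\fL^\bw_-$, $\fL^\bw_+$, $\fL_+$, $\fLm$ has bounded weights in the relevant direction. The hinge is then that $\fLm$ has \emph{finite-dimensional} components in every bidegree (since $U(\fLm)\iso\M_\bw\dual$ and the module is built from cohomology of an algebraic variety), so the substitution $q\mto q\inv$ is legal and produces $Z(\fLm,x,q\inv)=Z((\fLm)\dual,x,q)$. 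Comparing the two displayed identities and letting $\bw\to\infty$ (which kills the $S_{\pm 2\bw}$ terms coefficientwise) yields exactly $Z(\fL_-,x,q\inv)=-Z(\fL_+\dual,x,q)$. If you replace your $\Exp/\Log$ passage through $Z(\fL,x,q)$ by this additive comparison using only $\fLm$, your argument becomes the paper's.
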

\begin{proof}
We have $\fL=\fL^\bw_-\oplus\fLm \oplus \fL_+$
and $\fL^\bw_+=\fLm \oplus \fL_+$ by \eqref{decomp1}.
Therefore
\begin{equation*}
Z(\fL_-,x,q)=Z(\fL^\bw_-,x,q)+Z(\fLm,x,q),
\end{equation*}
\begin{equation*}
Z((\fL^\bw_+)\dual,x,q)
=Z((\fLm)\dual,x,q)+Z(\fL_+\dual,x,q).
\end{equation*}
Note that
$$Z(\fL^\bw_-,x,q)=Z(\ta_\bw\fL_-,x,q)
=S_{2\bw}Z(\fL_-,x,q),$$
$$Z((\fL^\bw_+)\dual,x,q)=Z((\ta_\bw\fL_+)\dual,x,q)
=S_{-2\bw}Z(\fL_+\dual,x,q),$$
hence
\begin{equation}\label{eq:Lm}
Z(\fLm,x,q)=Z(\fL_-,x,q)-S_{2\bw}Z(\fL_-,x,q),
\end{equation}
\begin{equation}\label{eq:Lp}
Z((\fLm)\dual,x,q)
=S_{-2\bw}Z(\fL_+\dual,x,q)-Z(\fL_+\dual,x,q)
\end{equation}
By Corollary \ref{cor:char free vertex},
we have
$$\Exp(Z(\fL_-,x,q))=Z(U(\fL_-),x,q)=
\sum_{\bd\in\bN^I}
\frac{(-q^{\oh})^{\hi(\bd,\bd)}}{(q)_\bd}x^\bd,$$
hence this series as well as $Z(\fL_-,x,q)$ are contained in  $\bQ(q^\oh)\pser{x_i\col i\in I}$ and we can interpret \eqref{eq:Lm} as an equality in this ring and obtain
\begin{equation}\label{eq:Lm1}
Z(\fLm,x,q\inv)=Z(\fL_-,x,q\inv)-S_{-2\bw}Z(\fL_-,x,q\inv),
\end{equation}

We have $U(\fLm)\iso \M_\bw\dual$ by Theorem \ref{module spanning} and \eqref{eq:dualmodule-smallliealg}.
All components of the module $\M_\bw$ are (degree shifted) cohomology of algebraic varieties, so they are finite-dimensional.
Therefore components of $\fLm$ are also finite-dimensional,
meaning that the coefficients of the power series $Z(\fLm,x,q)$ are Laurent polynomials in $q^{\oh}$ and
$Z(\fLm,x,q\inv)=Z((\fLm)\dual,x,q)$.
Comparing \eqref{eq:Lm1} and \eqref{eq:Lp}, we obtain
$$S_{2\bw}Z(\fL_-,x,q\inv)-Z(\fL_-,x,q\inv)
=Z((\fL^\bw_+)\dual,x,q)
-S_{2\bw}Z(\fL_+\dual,x,q).
$$
Letting $\bw\to\infty$ (meaning that $\bw_i\to\infty$ for all $i\in I$)
we obtain the required equation.
\end{proof}

This result implies that there exists a version of Corollary \ref{cor:DT} where the action of $\partial$ on the Lie algebra $\fL_+$ is used to determine the refined Donaldson--Thomas invariants.
As a consequence, one obtains a strong supporting evidence for the Koszulness conjecture of~\cite{DoFeRe}.
More precisely, 
in \cite{DoFeRe} one constructed some explicit quadratic algebra $\cA_Q$ such that its Koszul dual algebra is isomorphic to $U(\fL_+)$ and
\begin{equation}
Z(\cA_Q,q^\oh x,q)=A_Q(x,q).
\end{equation}
By Theorem \ref{th:symmetry}, we have
\begin{equation}
Z(U(\fL_+)\dual,x,q)=Z(U(\fL_-),x,q\inv)\inv=A_Q(x,q)\inv,
\end{equation}
hence
\begin{equation}
Z(U(\fL_+)\dual,x,q)\cdot Z(\cA_Q,q^\oh x,q)=1
\end{equation}
which is the ``numerical Koszulness'' property of the algebra $\cA_Q$.

\bibliography{newbib}
\bibliographystyle{hamsplain}
\end{document}